\theoremstyle{plain}
\newtheorem{theorem}{Theorem}[section]
\theoremstyle{definition}
\newtheorem {definition}{Definition}[section]
\theoremstyle{remark}
\newtheorem{remark}{Remark}[section]
\theoremstyle{plain}
\theoremstyle{plain}
\newtheorem{proposition}{Proposition}[section]
\theoremstyle{plain}
\newtheorem{corollary}{Corollary}[section]
\theoremstyle{plain}
\theoremstyle{plain}
\newtheorem{lemma}{Lemma}[section]
\def\R{\mathbb{R}}
\def\S{\mathbb{S}}
\def\N{\mathbb{N}}
\def\curl{\text{curl}}
\def\dive{\text{div}}
\def\cyl{{W_{\epsilon,\delta}}}
\newcommand{\mps}[1]{M_3(\Psi^{#1}(\mathbb{R}^4))}
\newcommand{\mpsc}[1]{M_3(\Psi^{#1}_{cl}(\mathbb{R}^4))}
\begin{document}

\title[Recovering a Metric from Cherenkov Radiation]{Recovering a Riemannian Metric from Cherenkov Radiation  in Inhomogeneous Anisotropic Medium}
\author{Antti T. P. Kujanp\"a\"a}
\email{antti.kujanpaa@helsinki.fi}
\address{Department of Mathematics and Statistics, University of Helsinki}

\begin{abstract}
Although travelling faster than the speed of light  in vacuum is not physically allowed, 
 the analogous bound in medium can be exceeded by a moving particle.  
For an electron in dielectric material this leads to emission of photons which is usually referred to as Cherenkov radiation. 
In this article a related 
mathematical system for waves in  inhomogeneous anisotropic 
medium with a maximum of three polarisation directions is studied. 
The waves are assumed to satisfy 
$P^k_j u_k (x,t)  = S_j(x,t)$, 
where $P$ is a vector-valued wave operator that depends on a Riemannian metric and $S $ is a point source that moves at speed $\beta < c$ in given direction $\theta \in \S^2$. The phase velocity $v_{\text{phase}}$ is described by the metric and depends on both location and direction of motion. 
In regions where $v_{\text{phase}}(x,\theta)  < \beta <c $ holds the source generates a cone-shaped front of singularities that propagate according to the underlying geometry.  
We introduce a model for a measurement setup that applies the mechanism and show that the Riemannian metric inside a bounded region can be reconstructed from partial boundary measurements. 
The result suggests that Cherenkov type radiation can be applied to detect internal geometric properties of an inhomogeneous anisotropic target from a distance. 
\end{abstract}

\maketitle

%\section{Nomenclature}  
%\begin{enumerate} 
%%\item Dot product: $x \cdot y := x_1 y_1 + x_2 y_2 + x_3y_3$
%%\[
%%x \cdot y := x_1 y_1 + x_2 y_2 + x_3y_3. 
%%\]
%%\[
%%\R^{3+1} := \R^3 \times \R. 
%%\]
%%\[
%%\vDelta ( E^j \partial_j ) := ( \delta^{kl} \partial_k \partial_l E^j ) \partial_j
%%\]
%%For a vector field $X$ on a Riemannian manifold, define 
%%\begin{equation}\label{curldiv}
%%\curl X := ( * d X^\flat )^\sharp, \quad \dive X  := *d* X^\flat. 
%%\end{equation} 
%\item Spatial derivatives: $\partial_j := \partial_{x_j}$,
%%\[
%%\partial_j := \partial_{x_j}
%%\]
%\item $\mathscr{F}$ $:=$ Fourier transform.
%\item $\S^2 := \{ x \in \R^3 : \sqrt{x \cdot x} = 1\}$
%\item $\mathbb{B}(0,R) := \{ x \in \R^3 : \sqrt{x \cdot x} <R \}$ 
%\item INTROON: For $(z,a,\theta,\ibeta) \in \R^4 \times \S^2 \times (0,1)$ we denote 
%\[
%K(z,a,\theta,\beta) := \{ ( z + s \ibeta \theta ,a+s) \in \R^4:  s \in \R \} .
%\]
%%On elements $(x,t), (y,s) \in  \R^{3+1} $ we set
%%\[
%%\langle (x,t), (y,s)  \rangle :=  x \cdot y - ts 
%%\]
%%and denote the associated Minkowski space $( \R^{3+1}, \langle \ \cdot \ , \ \cdot \ \rangle)$ by $\M$. 
%\item $| v |_g := \sqrt{ g(v,v) } $ 
%\item $M_3(X)$ $=$ $3\times 3$-matrices with indices in $X$.
%\end{enumerate}
%
%
%(MUUTA: homogeneous := homogeneous w.r.t. $\xi$. 
%Lisää
%$WF( \mathcal{Y}^k_j f_k ):= WF( \sum_{k=1,2,3} \mathcal{Y}^k_j f_k )$ etc. ?)
%

\section{Introduction}

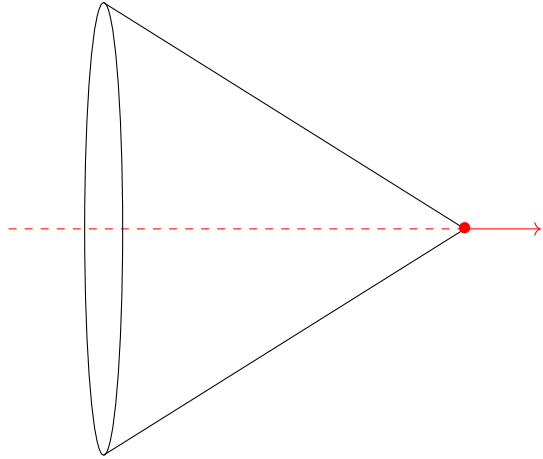
\begin{figure}[h]
\begin{tikzpicture}[scale=0.5]

 \definecolor{darkestco}{RGB}{0,0,0}
   \definecolor{darkco}{RGB}{0,80,80}
   \definecolor{co}{RGB}{0,160,195}
   \definecolor{lightco}{RGB}{0,180,225}
  \definecolor{lightestco}{RGB}{0,200,255}
    \definecolor{lightestlightestco}{RGB}{10,220,255}
     \definecolor{ultralightestlightestco}{RGB}{10,255,255}
  
 \draw[dashed,color=red] (-2,0) -- (10,0);
%   
%%  \draw[->,color=red] (2,0) -- (2,1);
%%    \draw[->,color=red] (2,0) -- (2,-1);
%%    
%%     \draw[->,color=red] (4,0) -- (4.08,1);
%%    \draw[->,color=red] (4,0) -- (4.08,-1);
%%    
%%         \draw[->,color=red] (6,0) -- (6.2,1);
%%    \draw[->,color=red] (6,0) -- (6.2,-1);
%%    
%%       \draw[->,color=red] (8,0) -- (8.3,1);
%%    \draw[->,color=red] (8,0) -- (8.3,-1);
%%    
%%           \draw[->,color=red] (10,0) -- (10.35,1);
%%    \draw[->,color=red] (10,0) -- (10.35,-1);

%        \draw[color=lightestco] (0.48,0) arc (180:360:0.025cm and 0.05cm) ;
%  \draw[color=lightestco] (0.48,0) arc (180:0:0.025cm and 0.05cm);
%  
% 
%
%%  
%        \draw[color=lightco] (0.45,0) arc (180:360:0.05cm and 0.5cm) ;
%  \draw[color=lightco] (0.45,0) arc (180:0:0.05cm and 0.5cm);
%  
%    \draw[color=co] (0.5,1.5)  -- (6,0)  -- (0.5,-1.5);
%    
%    
%  \draw[color=lightco] (0.5,0.5) -- (4,0)  -- (0.5,-0.5);
%  
%    \draw[color=lightestco] (0.48,0.05) -- (2,0) -- (0.48,-0.05);
%  
%       \draw[color=darkco] (0.15,0) arc (180:360:0.35cm and 3cm) ;
%  \draw[color=darkco] (0.15,0) arc (180:0:0.35cm and 3cm);
%  
%    \draw[color=darkco] (0.5,3)  -- (8,0) -- (0.5,-3);
% % \draw (0.5,3) -- (10,0) -- (0.5,-3);
%
%    
%          \draw[color=co] (0.3,0) arc (180:360:0.20cm and 1.5cm) ;
%  \draw[color=co] (0.3,0) arc (180:0:0.20cm and 1.5cm);

  \draw[color=darkestco] (0.5,6)--  (10,0)  --  (0.5,-6);

      \draw[color=darkestco] (0,0) arc (180:360:0.5cm and 6cm) ;
  \draw[color=darkestco] (0,0) arc (180:0:0.5cm and 6cm);
  
 % \draw[dashed,->] (4,0) -- (5.5,2.375);
  %  \draw[dashed,->] (4,0) -- (5.5,-2.375);
    
%   
%     \draw[->,dotted,color=darkestco] (10,0) -- (12,0);
%      \draw[color=darkco, dotted] (8,0) -- (10,0);
%       \draw[color=co,dotted] (6,0) -- (8,0);
%        \draw[color=lightco,dotted] (4,0) -- (6,0);
%         \draw[color=lightestco,dotted] (-1,0) -- (4,0);
%      \draw[color=lightestlightestco,dotted] (-1,0) -- (-3,0);
%       \draw[color=ultralightestlightestco,dotted] (-3,0) -- (-4,0);
%%    \draw[->] (1,2.85) -- (1.3,3.95);
%%        \draw[->] (4,1.90) -- (4.3,3);
%%          \draw[->] (7,0.95) -- (7.3,2.05);
%%            \draw[->] (1,-2.85) -- (1.3,-3.95);
%%        \draw[->] (4,-1.90) -- (4.3,-3);
%%          \draw[->] (7,-0.95) -- (7.3,-2.05);
%          
%          
% 
   \draw[color=red] (10,0) node {\textbullet};
 
 \draw[color=red][->] (10,0) -- (12,0);

%    \draw[color=lightestco] (2,0) node {\textbullet};
%        \draw[color=lightco] (4,0) node {\textbullet};
%            \draw[color=co] (6,0) node {\textbullet};
%                \draw[color=darkco] (8,0) node {\textbullet};
%                
%               % \draw[color=red] (0.46,0) circle (6pt);
%                \draw[color=red,thick] (0.2,0.25) -- (0.7,-0.25);
%                  \draw[color=red,thick] (0.2,-0.25) -- (0.7,0.25);
%            
%            
%            \draw[color=lightestlightestco] (-1,0) node {\textbullet};
%              \draw[color=ultralightestlightestco] (-3,0) node {\textbullet};
\end{tikzpicture}

\caption{Cherenkov radiation wave front (in black) in three dimensional medium at a fixed time is a cone with apex at a moving charged particle (in red). 
The phenomenon is an electromagnetic equivalent of a sonic boom. 
%In the picture the particle moves 
%The cone is illustrated at several moments in time as the particle (the spherical node) moving in inhomogeneous medium enters a region where its velocity exceeds the speed of light in the material.  
%The red cross indicates a point where the particle breaks the velocity barrier, that is, moves momentarily at the speed of light.
%Notice that no Cherenkov radiation occur at the subluminal region on the left. 
%On the left side of the barrier the velocity of the particle is subluminal, whereas on the right side the particle moves faster than waves in that particular direction.
%In this illustration the cone gets wider as the particle propagates which indicates that the speed of light in the material decreases in that direction. 
 }\label{90ds}
\end{figure}

When a charge carrier moves in material faster than the phase velocity it radiates photons. This phenomenon, known in physics  as Cherenkov radiation, plays an important role in particle detection systems such as the ring imaging Cherenkov detector (RICH). 
The mechanism has mathematical description (See Section \ref{micromikko}) as a modification of propagation of singularities for real principal type operators.  To introduce the concept, let us consider a system 
%for a (1,1)-operator $P^k_j (x,t)  \partial_k \otimes dx^k$, $j,k=1,2,3$, is of the form
of the form
\[
P^k_j u_k (x,t)  = S_j(x,t), \quad j=1,2,3, \quad x\in \R^3 , \quad t\in \R,
\]
%with hyperbolic $P= P^k_j \frac{\partial }{\partial x^k} \otimes dx^j$,  
where the Einstein summation convention is considered over $k=1,2,3$, the operator $P$ with entries $P^k_j = P^k_j (x,t,D)$, $j,k=1,2,3$ is hyperbolic,  
and $S_j (x,t)  dx^j$ is a singular source with each coordinate $S_j$ conormal to the world line 
\[
K:= \{ (z+t \beta \theta, t) \in \R^4 : t\in \R \}, \quad \theta \in \S^2, \quad  \beta\in (0,1)
\]
 (i.e. trajectory in space-time) of the particle. 
An example of this is a scalar model, e.g. the scalar wave equation 
\[
(\partial_t^2 - \Delta_g )u = f,\quad u  \in \mathcal{D}'(\R^4),
\]
(identified as $P^k_j = \delta^k_j  (\partial_t^2 - \Delta_g ) $ and $S_j=f$)
with a source $ f \in \mathcal{D}'(\R^4)$ conormal to $K$ (e.g. $f(x,t)=\delta(x-z-\beta \theta)$).  
%(
%\[
%$P u = f  \in \mathcal{D}'(\R^4) $, $u  \in \mathcal{D}'(\R^4)$ 
%\]
%.  
The source $S=S_j dx^j$ can be interpreted as a singularity that moves at a physically allowed ($\beta<c=1$) constant velocity $\beta \theta$, $\beta \in (0,1)$ through $z\in \R^3$. 
If for $P$, or more generally, for the product $V P W$ with non-degenerate $V$ and $W$ the leading part of the operator equals a scalar operator $Q = QI$ of real principal type,  then
% reduces to it via 
%\[
% V P_0 W =   Q 
%\] 
%for some non-degenerate $(1,1)$-tensors $V$ and $W$ 
the standard propagation of singularities extends to the case above. 
More precisely; the tensors $V$ and $W$ are neglectable in terms of singularities and the wave front set of $u = u_j dx^j$ splits into a non-propagating part which does not leave $\bigcup_{j=1}^3 WF(S_j) \subset N^*K$, and a propagating part which spreads along the bicharacteristics of $Q$ into the surrounding space. 
%The tensors $V$ and $W$ are do not remove singularities
We assume that the leading part $Q$ is of the form 
\[
Q= \partial_t^2 - g^{jk}(x)  \frac{\partial}{\partial x^j} \frac{\partial}{\partial x^k} 
%+ \text{(lower order terms)}, 
\]
%e.g. 
%\[
%P = \partial_t^2 - \Delta_g, 
%\]
where $g$ stands for a Riemannian metric. The characteristic manifold for $Q$ is the covector light-cone 
\[
L\R^4 = \{ (x,t;\xi,\omega) \in T^*\R^4 \setminus \{0\}  : g^{jk}(x) \xi_j \xi_k = \omega^2 \}, 
\]
also known as the null-cone, and the speed of wave propagation at $x\in \R^3$ in direction $\theta \in \S^2$ is $ v_{\text{phase}}(x,\theta):=  ( g^{jk}(x) \theta_j \theta_k )^{-1/2}$. 
%The quantity is reciprocal for the refractive index. 
Provided that the source moves slower than waves
%\[
(i.e. $\beta < v_{\text{phase}}$) 
%\quad \quad 
% v_{\text{phase}}(x,\theta):=  ( g^{jk}(x) \theta_j \theta_k )^{-1/2} , 
%\]
the normal bundle $N^*K$ of the world line, and hence the wave front set of the source remains disjoint from the light-cone (the left picture in Figure \ref{asd34iuw2222}) which implies that no propagating singularities are generated. This corresponds to absence of Cherenkov radiation in subluminal regimes. 
On the other hand, in regions where the point source moves faster than waves 
%$(\beta > v_{\text{phase}} )$ 
a non-empty intersection between the light-cone and the normal bundle appears (the right picture in Figure \ref{asd34iuw2222}), thus enabling radiation of singularities from the source.  %Hence, Cherenkov radiation phenomenon. 
%The phenomenon is analogous to a sonic boom. 
The propagating singularities add up to a conical shock wave which is illustrated in Figure \ref{90ds}. 

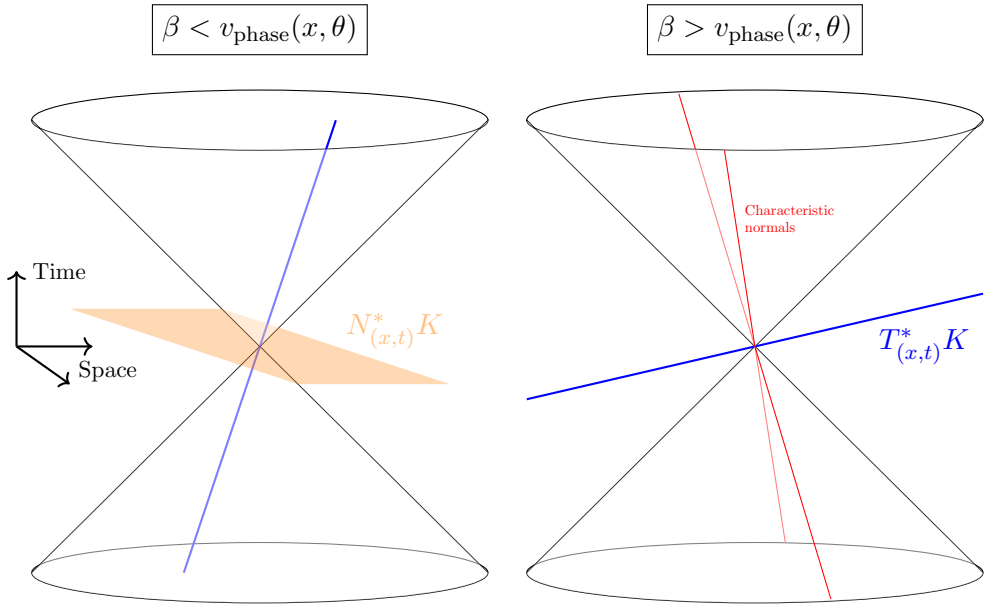
\begin{figure}[h]
%\begin{center}
\begin{minipage}{.43\textwidth}
%\begin{figure}
\begin{tikzpicture}

\node[rectangle,draw] at (0,4.2) {
%$\frac{1}{\beta} > \sqrt{g^{jk}(x) \theta_j \theta_k} $
$\beta < v_{\text{phase}}(x,\theta) $
} ; 

    \draw (-3,-3) arc (180:360:3 and 0.4) -- (0,0) -- cycle;
  \draw (-3,-3) arc (180:0:3 and 0.4);
 
%    \draw[color=blue, dotted](-2.5,0.5) -- (0.5,-0.5);
%        \draw[color=blue,dotted](-0.5,0.5) -- (2.5,-0.5);
%     \draw[color=blue,dotted] (-1.5,0.5) -- (1.5,-0.5);
%     \draw[color=blue,dotted] (-1,0) -- (1,0);
%     
%          \draw[color=blue,dotted](2.5,-0.5) -- (0.5,-0.5);
%                 \draw[color=blue,dotted](-2.5,0.5) -- (-0.5,0.5);
   
  \draw (-3,3) arc (180:360:3 and 0.4) -- (0,0) -- cycle;
  \draw (-3,3) arc (180:0:3 and 0.4);

%
  %  \fill[color=red,opacity=0.25] (0.5,-2.5) -- (-0.5,0.5) -- (-0.5,2.5) --(0.5,-0.5) ;
%   \draw[color=red, dotted](0.5,-2.5) -- (-0.5,0.5);
%        \draw[color=red,dotted](0.5,-0.5) -- (-0.5,2.5);
%     \draw[color=red,dotted] (0.5,-1.5) -- (-0.5,1.5);
%     \draw[color=red,dotted] (0,-1) -- (0,1);
%     
%          \draw[color=red,dotted](-0.5,2.5) -- (-0.5,0.5);
%                \draw[color=red,dotted](0.5,-2.5) -- (0.5,-0.5);

 % \draw[thick,color=red] (-3,-1) -- (3,1);
  \draw[thick,color=blue] (-1,-3) -- (1,3);

    \fill[white,opacity=0.5] (-3,-3) arc (180:360:3 and 0.4) -- (0,0) -- cycle;
      \fill[color=orange,opacity=0.3] (-2.5,0.5) -- (0.5,-0.5) -- (2.5,-0.5) --(-0.5,0.5) ;
         \fill[white,opacity=0.5] (-3,3) arc (180:360:3 and 0.4) -- (0,0) -- cycle;
         
         \node[right,color=orange,opacity=0.5] at (1,0.2)  {$N^*_{(x,t)} K$};

           \draw[->, thick] (-3.2,0) -- (-3.2,1);
      \node[right,scale=0.8] at (-3.1,1)  {Time};
      
         \draw[->, thick] (-3.2,0) -- (-2.2,0);
           \draw[->, thick] (-3.2,0) -- (-2.5,-0.5);
         \node[below,scale=0.8] at (-2,-0.08)  {Space};
        %  \node[below] at (-1.8,0.2)  {$dx^2$};
  
  \end{tikzpicture}
%  \end{figure}
\end{minipage}
  \begin{minipage}{.43\textwidth}
%  \begin{figure}

\begin{tikzpicture}

\node[rectangle,draw] at (0,4.2) {
%$\frac{1}{\beta} < \sqrt{g^{jk}(x) \theta_j \theta_k }$
$\beta > v_{\text{phase}} (x,\theta)$ 
} ; 

    \draw (-3,-3) arc (180:360:3 and 0.4) -- (0,0) -- cycle;
  \draw (-3,-3) arc (180:0:3 and 0.4);
%    \fill[color=blue,opacity=0.25] (-2.5,0.5) -- (0.5,-0.5) -- (2.5,-0.5) --(-0.5,0.5) ;
%    \draw[color=blue, dotted](-2.5,0.5) -- (0.5,-0.5);
%        \draw[color=blue,dotted](-0.5,0.5) -- (2.5,-0.5);
%     \draw[color=blue,dotted] (-1.5,0.5) -- (1.5,-0.5);
%     \draw[color=blue,dotted] (-1,0) -- (1,0);
%     
%          \draw[color=blue,dotted](2.5,-0.5) -- (0.5,-0.5);
%                 \draw[color=blue,dotted](-2.5,0.5) -- (-0.5,0.5);
   
  \draw (-3,3) arc (180:360:3 and 0.4) -- (0,0) -- cycle;
  \draw (-3,3) arc (180:0:3 and 0.4);

%
    %\fill[color=red,opacity=0.25] (0.5,-2.5) -- (-0.5,0.5) -- (-0.5,2.5) --(0.5,-0.5) ;
%   \draw[color=red, dotted](0.5,-2.5) -- (-0.5,0.5);
%        \draw[color=red,dotted](0.5,-0.5) -- (-0.5,2.5);
%     \draw[color=red,dotted] (0.5,-1.5) -- (-0.5,1.5);
%     \draw[color=red,dotted] (0,-1) -- (0,1);
%     
%          \draw[color=red,dotted](-0.5,2.5) -- (-0.5,0.5);
%                \draw[color=red,dotted](0.5,-2.5) -- (0.5,-0.5);

   \draw[color=red] ( 0,0) -- ( -1,3.35) ; 
    \draw[color=red] ( 0,0) -- ( 0.4,-2.6) ; 
  
  \draw[thick,color=blue] (-3,-0.7) -- (3,0.7);
 % \draw[thick,color=blue] (-1,-3) -- (1,3);

     \fill[white,opacity=0.5] (-3,3) arc (180:360:3 and 0.4) -- (0,0) -- cycle;
   \fill[white,opacity=0.5] (-3,-3) arc (180:360:3 and 0.4) -- (0,0) -- cycle;
   
     \draw[color=red] ( 0,0) -- ( -0.4,2.6) ; 
      \draw[color=red] ( 0,0) -- ( 1,-3.35) ; 
      
    %  \draw[->,color=olive, thick] (-3.2,0) -- (-3.2,2);
      %  \draw[->, dashed] (0,4) -- (2,4);
        %  \draw[->, dashed] (0,4) -- (2,3.8);
    %  \node[right,color=olive] at (-3.2,1)  {time};
      
       \node[right,color=blue] at (1.5,0)  {$T_{(x,t)}^*K$};
       
        \node[right,color=red,scale=0.5,text width=3cm] at (-0.2,1.7)  {Characteristic normals};
      
\end{tikzpicture}

%\caption{asd}\label{figg263}
%\end{figure}

\end{minipage}
\caption{The set of characteristic covectors of the wave operator in a single fibre $T_{(x,t)}^*\R^4$ is the light-cone $L_{(x,t)}\R^4$ which in some convenient spatial coordinates (e.g. normal coordinates at $x$) is the Minkowsky light-cone (in black). 
%The pictures represent the setting in dimension $2+1$. 
%In normal coordinates centered at $(x,t)$ the wave operator takes the form $\partial_t - k^2 \Delta$ at the fibre. 
The blue lines illustrate the cotangent space of the world-line $K$ through $(x,t) \in K$. The two pictures correspond to subluminal and superluminal cases, respectively. That is; the line $K$ is time-like at $(x,t)$ in the first picture, whereas at the second picture the source moves faster than null-geodesics. The normal plane $N_{(x,t)}^*K$ (in orange) of the line at $(x,t)$ does not intersect the cone  in the first picture, whereas 
in the second picture there exist characteristic normals (in red), and hence bicharacteristics in $L\R^4$ through them.}\label{asd34iuw2222}
%\end{center}
\end{figure}

In this article it is shown that Cherenkov radiation can be applied to recover Riemannian structure of a target region within anisotropic inhomogeneous medium.
The idea is to send singularities (particles or sharp pulses) into the region of interest and observe new singularities generated as the incident signals interact with the medium. These disruptions are carried by waves into the exterior of the target where they eventually hit a given hypersurface that surrounds the region of interest. 
On the surface the generated singularities are observed and data is collected by repeating the measurement with various initial parameters $(z,\theta,\beta)$ of the source. We show that information collected in this way on a convenient part of the surface determines uniquely the metric $g$ inside the target for a large class of geometries and sources.  

The structure of the article is as follows: In the first section the model and the main result together with two related examples are introduced. Some basic concepts are briefly discussed in Section \ref{premmi}. In Section \ref{micromikko} we build up the microlocal tools which are then used in the last section for proving the main result. 
Appendix \ref{appe1} contains additional details for Section \ref{micromikko}.

\subsection{The Model}\label{modella}
%For each $j,k=1,2,3$ let  $P^k_j \in \Psi^2_{cl} (\R^4)$ be a second order differential operator, or more generally, a classical pseudo-differential operator on $\R^4$. 
%Let $P^k_j(x,t)$, $j,k=1,2,3$ be differential operators with  let $g$ be a Riemannian metric on $\R^3$.    

Let $I$ stand for the identity matrix. 
Assume that the (1,1)-tensor $P(x,t,D)$ on $\R^3$ with differential operator entries $P^k_j(x,t,D) \in \Psi^2_{cl} (\R^4)$ is similar to the vector wave-operator 
%$  (\partial_t^2 - \Delta_g) I $  
in the sense that the condition
\begin{equation}\label{sdfawh}
P(x,t,D) = \left( \partial_t^2 - g^{rl}(x)  \frac{\partial}{\partial x^r}  \frac{\partial}{\partial x^l} \right) I + F(x,D_x) ,
\end{equation}
%holds up
% to composition with non-degenerate $(1,1)$-tensors. 
that is, 
\begin{equation}\label{eroeoeeo}
  P^k_j    (x,t,D)    =  \delta_j^k  \left(\partial_t^2 - g^{rl}(x) \frac{\partial}{\partial x^r} \frac{\partial}{\partial x^l} \right)    + F^k_j (x,D_x) , \quad j,k=1,2,3, 
\end{equation}
%(Einstein summation over $r,l=1,2,3$)
holds for some first order operators $F_j^k(x,D_x)$, $j,k=1,2,3$. 
%, and non-degenerate
%\footnote{That is; $V^k_j \in \Psi^m_{cl}(\R^4)$ 
%% $\widetilde{V}^k_j \in \Psi^{m'}(\R^4)$, $j,k=1,2,3$, 
%such that the principal symbol matrix $[\sigma_m(V^k_j)]$ 
%%and $[\sigma_{m'}(\widetilde{V}^k_j)]$ are 
%is non-degenerate on $\R^4 \times ( \R^4 \setminus \{0\} )$ (cf. ellipticity for scalar operators). } 
%(1,1)-tensor $V$. 
%\footnote{More generally, it suffices to assume that $V$ is a tensor with scalar operator entries and elliptic in the sense that the matrix $[\sigma(V^k_j)]$ of principal symbols is invertible pointwise.}   $V$.
%and $\widetilde{V}$. 
%
%The most important case to consider is when $V(x,t)$ and $\widetilde{V}(x,t)$ for fixed $(x,t)$ are smooth isomorphisms $\R^3 \rightarrow \R^3$.  
%Here one can also replace the functions $V^k_j(x,t)$, $W^k_j(x,t)$ with pseudo-differential operators 
Consider a point-like object moving linearly in $\R^3$ in direction $\theta  \in \S^2 $ at physically allowed speed $\beta \in (0,c)=(0,1)$ ($c=1$ in natural unit system). Let $z$ be the location at time $t=0$. The associated world line in the space time $\R^4$ is 
\[
K(z,\theta,\beta) := \{ (x,t) \in \R^4 : x= z + t \beta \theta \}, 
%\quad  (\theta,\beta) \in \S^2 \times (0,1).
\]
We study singularities of distributions $v_j\in \mathcal{D}' ( \R^4)$, $j=1,2,3$, that obey 
\begin{equation}\label{eiSD}
P^k_j v_k  (x,t)       \equiv  S_j (x,t) \mod C^\infty( \R^4)   , \quad j=1,2,3, 
%\quad \quad  \text{(summation over } k=1,2,3 \text{)}  
\end{equation}
where 
 the terms $S_j$ are conormal distributions over $K(z,\theta,\beta)$, that is,
 \begin{equation}\label{woeser3}
 S_j (x,t)  = \int_{\R^3} e^{i(x-z-t\beta \theta) \cdot \xi  } c_j(x,t,\xi) d\xi  ,  \quad j=1,2,3, \quad c_j \in S^r ( \R^4 \times \R^3).
 \end{equation}
 E.g. $S= \delta (x- \beta \theta t )  dx^1 $ if $c_j = \delta_{j1}  $. 
Above the relation $a \equiv b \mod C^\infty $ refers to $a-b \in C^\infty$.  
The source $S=S_j dx^j$ can be interpreted as a  point singularity  that moves in $\R^3$ along the trajectory $t \mapsto z + t \beta \theta$, $\theta \in S^2$, $\beta\in (0,1)$, whereas the field $v= v_j dx^j$ is a wave in anisotropic inhomogeneous material. 
The three components of each element correspond to different polarisations. 
Electromagnetic waves created by a moving charge disruption such as a charged particle in anisotropic medium is perhaps the most obvious application for the model. 
For materials with scalar wave impedance the electric field obeys \eqref{eiSD} 
% the ideal operator
%\begin{equation}\label{idfserr}
%P =   \left( \partial_t^2 - g^{rl}(x)  \frac{\partial}{\partial x^r}  \frac{\partial}{\partial x^l} \right) I    + F (x), 
%\end{equation}
%where 
for a metric $g$ conformally equivalent to the anisotropic permeability tensor $\varepsilon(x)$. This model is explored in Section \ref{eleex}.
\begin{remark}\label{rekka1}
Notice that an analogous scalar equation, e.g the scalar wave equation 
\[
(\partial_t - \Delta_g)   v =f \in \mathcal{D}'(\R^4), \quad f,v \in \mathcal{D}'(\R^4)
\]
 with source $f$ conormal to $K(x,\theta,\beta)$, can also be expressed within the model by identifying any scalar operator $P_{scal}$ with the diagonal matrix  $ P^k_j :=  \delta^k_j  P_{scal}  $, $j,k=1,2,3$ and a scalar source $S_{scal}$ with $S_j :=S_{scal}$, $j=1,2,3$. 
Similarly, the model also generalises waves with two polarisation directions. 
\end{remark}
\begin{remark}\label{aksdhwri}
The model above is not as general as possible. 
It most likely suffices to assume that each entry $F^k_j$ of the tensor $F$ is an operator in the space $\Psi^1_{cl}(\R^3)$  of classical first order pseudo-differential operators. 
Moreover, 
the 
identity \eqref{sdfawh} 
%\[
%P(x,t,D) = \left( \partial_t^2 - g^{rl}(x)  \frac{\partial}{\partial x^r}  \frac{\partial}{\partial x^l} \right) I + F(x,D_x)
%\]
can be replaced by the more general condition
\begin{equation}\label{wqie}
P(x,t,D) \cong \left( \partial_t^2 - g^{rl}(x)  \frac{\partial}{\partial x^r}  \frac{\partial}{\partial x^l} \right) I + F(x,D_x)
\end{equation}
where the relation $P \cong G$ between two operators $P$ and $G$ refers to 
\[
P^k_j = A^l_j G_l^r B_r^k
\]
% for some classical operators $A$ and $B $ that do not remove or generate singularities,  
 %in the sense that
% \[
%\bigcup_{j=1,2,3} WF ( A^k_j f_k ) = \bigcup_{k=1,2,3} WF ( f_k ), \quad \quad  \bigcup_{j=1,2,3} WF ( B^k_j f_k ) = \bigcup_{k=1,2,3} WF ( f_k )
% \]
for some operators $A$ 
% \in \Psi^{m_1}_{cl} ( \R^4)$ 
 and $B$ 
 %$B^l_j \in \Psi^{m_2}_{cl} ( \R^4)$ 
 with pointwise non-degenerate principal symbol matrices $[\sigma_{m_1}(A^l_j )]$ and $[\sigma_{m_2}(B^l_j )]$ (see Lemma \ref{wiikki}). Notice that the non-degeneracy is just a vector-valued equivalent of ellipticity and that 
  %The condition is v
  %This is 
   a non-degenerate smooth  time-dependent $(1,1)$-tensor $A: u(x,t) \mapsto A^k_j(x,t) u_k(x,t) dx^j$ generalizes the concept of elliptic multiplication operator $u(x,t) \mapsto f(x,t)u(x,t)$ for non-vanishing $f\in C^\infty(\R^4)$. Notice that in the vector valued setting ellipticity allows exchange of singularities between polarisations. 
   
Moreover, the results of this work are expected to remain true even if the model is considered only microlocally near the covector light-cone 
 \[
 L\R^4 =  \{ (x,t,\xi,\omega)  \in T^*\R^4 : \omega^2 = g^{jk}(x) \xi_j \xi_k \}. 
 \]
 This follows from the fact that methodologically the proofs are microlocal and based on propagation of singularities. 
%See Appendix \ref{dfdser} for more details. 
%Moreover, the entries $V^k_j$ 
%of the non-degenerate ``polarisation tensor''  $V$ can be replaced with operators in the class $\Psi^0_{cl}(\R^4)$. 
%Non-degeneracy is interpreted as pointwise invertibility of the principal symbol matrix $[\sigma(V^k_j)] \in M_3(S^0_{cl}(\R^4 \times \R^4) )$ which is a vector-valued equivalent of ellipticity. 
\end{remark}

 \subsubsection{The Data}\label{daatta}

%Next we introduce the  measurement setup. 
Due to physical background we require that waves propagate with speed less or equal to the speed of light in vacuum. This is equivalent to 
\begin{equation}\label{physsi}
|\theta|_g(x) \geq 1, \quad  \forall (x,\theta) \in \R^3 \times \S^2, 
\end{equation}
where we denote $|\theta|_g(x) := \sqrt{\theta^j \theta^k g_{jk}(x)}$.
Let $W \subset \R^3$ be an open bounded set with smooth boundary $\partial W$ and fix open $U \subset W$.  
We consider $U$ as a region with unknown geometry inside the medium, whereas the boundary $\partial W$ is the hypersurface on which observations are made.
The source $S$ is controlled by the variables $z,\theta$ and $\beta$.

To avoid Cherenkov radiation from arbitrarily far we consider mediums that converge to vacuum at infinity. That is;
$|\theta|_g(x) \longrightarrow 1$ for every $\theta \in \S^2$ as $\sqrt{x\cdot x} \longrightarrow \infty$.
Alternatively, one may assume that there is a relatively large velocity threshold $\beta_0 \in (0,1)$ such that the preimage
\begin{equation}\label{pwe}
R_\theta = \Big\{ x \in \R^3 :  \frac{1}{\beta_0} <  | \theta |_g (x)  \Big\}
\end{equation}
is bounded and contains $U$ for every $\theta \in \S^2$, and then focus on velocities $\beta \in (0,\beta_0)$. 
In both cases the wave front set of the tail $(1-\chi)S$ is disjoint from the characteristic set of $P$ for a test function 
$\chi = \chi_{g,\beta} \in C_c^\infty(\R^3)$, $0 \leq \chi \leq 1$ that equals $1$ in a sufficiently large open set. Hence, for a convenient operator $P^{-1}$ (see Lemma \ref{wiikki}) that inverts $P$ microlocally near $N^*K $ in $ \{ \chi < 1\}$ the element $P^{-1}(1-\chi)S$ does not radiate singularities and 
 $u := v- P^{-1}(1-\chi)S$ satisfies
\begin{equation}\label{cytor}
Pu \equiv \chi S\in \mathcal{E}'(\R^4). 
\end{equation}
The advantage in writing the system in this form is that the source on the right is compactly supported. 
%Notice that the transformation from \eqref{eiSD} to the system above does not require knowing values of $P$ in the regions of interest. 
%By \eqref{physsi
%
Through this procedure we may pass on 
%from \eqref{eiSD} 
to a setup 
\[
P^k_j u_k \equiv S_j,
\]
 in which the source $S = S_j dx^j$ is not only conormal to $K(z,\theta,\beta)$ but also compactly supported. 
\begin{definition}\label{stabbel}
Let $U\subset W \subset (\R^3,g)$ be as above. 
We say that a subset $\Upsilon = \Upsilon_{U,g} \subset \partial W$ is a stable part of the boundary with respect to $U$ if $\Upsilon$ is open in $\partial W$ and for every $z\in U$ there is at least one $x_0=x_0(z)\in \Upsilon$ that minimises distance to $\partial W$, i.e.
\[
\text{dist}_g(z, \partial W) = \text{dist}_g(z,x_0),
\]
where $\text{dist}_g(z, \partial W) := \min_{x \in \partial W} \text{dist}_g(x,z)$. 
\end{definition}
\begin{remark}
The whole boundary $\Upsilon= \partial W$ is always stable. 
\end{remark}

For $\Lambda \subset  T^*\R^4$ we let  $\Lambda|_{T(\Upsilon \times \R)} \subset  T^*(\Upsilon \times \R)$ stand for the canonical projection $\Lambda|_{T(\Upsilon \times \R)} := \Big\{ \big(x,t, (\xi,\omega)|_{T_{(x,t)} (\Upsilon \times \R)} \big)   : (x,t,\xi,\omega) \in \Lambda, (x,t) \in \Upsilon \times \R  \Big\}$. 
Given a family $ \{  S_j( \ \cdot \ ; z,\theta,\beta)dx^j :  (z,\theta,\beta) \in U \times \S^2 \times (0,1) \}$ of compactly supported sources of the form \eqref{woeser3} and a stable $\Upsilon\subset \partial W$, the measurement data are defined as the following map:
\[
 %\Phi : U \times \S^2 \times (0,1)  \rightarrow 2^{T^*\partial W} , \quad   
 (z,\theta,\beta) \mapsto \bigcup_{j=1,2,3} WF( u_j  )|_{ T(\Upsilon \times \R) },  \quad (z,\theta,\beta) \in U \times \S^2 \times (0,1),
\]
where $u = u_j ( \ \cdot \ ; z,\theta,\beta) dx^j$ solves for each $ j=1,2,3,$ the conditions 
\begin{align} 
&P^k_j u_k \equiv S_j \mod C^\infty ( \R^4), \label{qwerijasdasdhhk111} \\
&u_j |_{\R^3 \times (-\infty, -T]} \in C^\infty( \R^3 \times ( -\infty , -T]  )  \label{qweri423423jhhk1} %\quad W_T := \{ (w+x ,t-T) :  \sqrt{x \cdot x} < t, \ w \in W \},
\end{align}
 for $S_j = S_j ( \ \cdot \ ; z,\theta,\beta) $ and some (sufficiently large) $T>0$. 
 %Above, $u_j |_{\Upsilon \times \R}:= \iota^* u_j$, where $\iota : \Upsilon \times \R \hookrightarrow \R^4$ is the trivial inclusion map.
%where $W_T := \{ (w+x ,t-T) :  \sqrt{x \cdot x} < t, \ w \in W \}$ 
%for sufficiently large $T>0$. 
Notice that each source is supported in $\R^3 \times (-T,\infty)$ for large $T>0$ (see Figure \ref{efigg34356}). As the high-energy limit $\beta \longrightarrow 1$ is unpractical, we are interested in smaller datasets where the velocities are restricted to a smaller subinterval $\mathcal{I}_z \subset (0,1)$. The interval may depend on $z\in U$. 

\begin{remark}
%The union $\bigcup_{j=1,2,3} WF(u_j)$, and hence the data, is invariant (see Lemma \ref{wiikki}) in the transformation $u \mapsto  V^k_j u_k dx^j$ for an operator $V$ with point-wise non-degenerate principal symbol matrix. 
%One may take  a non-degenerate smooth $(1,1)$-tensor $V$. 
%Such an operator only transforms singularities between polarisations without smoothing them out or creating new ones. 
%The data above is invariant in transformations $P^k_j \mapsto A^r_j P^l_r B^k_l$ where operators $A$ and $B$ have non-degenerate principal symbol matrices.
%The operator $P$ in equations (\ref{qwerijasdasdhhk111}-\ref{qweri423423jhhk1}) can be 
%The generalisation
%\[
%P \cong \left( \partial_t^2 - g^{kl} \frac{\partial}{\partial x^k} \frac{\partial}{\partial x^l} \right) +F,
%\]
Substituting $P$ in the definition with more general operator $\tilde{P} \cong P$ 
(see Remark \ref{aksdhwri}) yields the same data. 
For example, the data are invariant in transformation $u(x,t) \mapsto V^k_j(x,t) u_k(x,t) dx^j$ and $S(x,t) \mapsto W^k_j(x,t) S_k(x,t) dx^j$, where $V$ and $W$ are smooth non-degenerate $(1,1)$-tensors. 
%Due to the invariance it suffices to focus on models of the form (\ref{eieirw1}-\ref{eieirw2}). 
\end{remark}

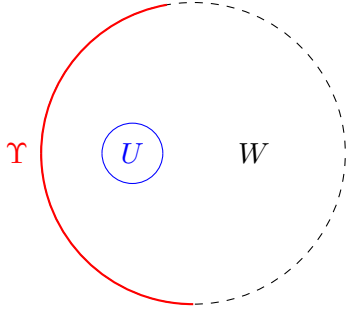
\begin{figure}[h]
\begin{tikzpicture}[scale=0.4]
 \draw[dashed] (0,0) arc (180:360:5cm and 5cm) ;
  \draw[dashed] (0,0) arc (180:0:5cm and 5cm);
   \draw[color=blue] (2,0) arc (180:360:1cm and 1cm) ;
  \draw[color=blue] (2,0) arc (180:0:1cm and 1cm);
  
   \draw[thick,color=red] (0,0) arc (-180:-90:5cm and 5cm);
     \draw[thick,color=red] (0,0) arc (180:100:5cm and 5cm);
     
     \node[color=blue] at (3,0) {$U$};
     
     \node at (7,0) {$W$};
     
      \node[color=red] at (-0.8,0) {$\Upsilon$};
      
      %  \node at (15,0) {$\mathbb{R}^3\setminus W$};
     
  \end{tikzpicture}
  \caption{A schematic 2D illustration of $W$,$U$ and a stable part $\Upsilon \subset \partial W$. The ambient space represents $\R^3$.}
\end{figure}

\begin{figure}
\begin{tikzpicture}[scale=1.2]
    \draw[dashed] (0,-3) arc (180:360:1 and 0.1)  ;
  \draw[dashed] (0,-3) arc (180:0:1 and 0.1);
      \draw[color=blue] (0.4,-3) arc (180:360:0.5 and 0.05)  ;
  \draw[color=blue] (0.4,-3) arc (180:0:0.5 and 0.05);
    %  \draw[dashed] (-3,3) arc (180:360:6 and 0.3)  ;
 % \draw[dashed] (-3,3) arc (180:0:6 and 0.3);
 % \draw (0,-3) -- (-6,3) ; 
 % \draw (2,-3) -- (8,3) ; 
    \draw[thick,color=red] (0,-3) -- (0,3) ; 
  \draw[dashed] (2,-3) -- (2,3) ; 
  \draw (-1,-2) -- (3,2);
      \draw[color=blue] (0.4,-3) -- (0.4,3) ; 
  \draw[color=blue] (1.4,-3) -- (1.4,3) ; 
  
  \node[label=above:z] at (1,0) {\textbullet};
%   \node[label=above:$W_T$] at (-2,1) { };
   %   \node[label=above:$W_T$] at (4,1) { };
   
   \draw[dotted] (-3,0) -- (5,0);
  %  \draw[dotted] (-3,-3) -- (0,-3);
   %  \draw[dotted] (2,-3) -- (5,-3);
     \node[label=above: {$t=0$}] at (-1.8,0) { };
      \node[label=above: {$t \leq -T$}] at (-1.5,-4.5) { }; 
      \fill[color=gray, opacity=0.2] (-3,-3) -- (5,-3) -- (5,-5) -- (-3,-5);

         \draw[color=red, thick] (0,-3) arc (-180:-50:1 and 0.1)  ;
             \draw[color=red] (0,-3) arc (180:120:1 and 0.1)  ;
         
             \draw[color=red] (0.49,-2.92) -- (0.49,3) ; 
  \draw[color=red,thick] (1.65,-3.08) -- (1.65,3) ; 
      
\end{tikzpicture}
\caption{A schematic illustration of the sets associated with  (\ref{qwerijasdasdhhk111}-\ref{qweri423423jhhk1}). 
We consider waves $u$ that are smooth in the half-space $\R^3 \times (-\infty,-T]$,  indicated in gray. The sets $U \times (-T,\infty)$ and $\Upsilon \times (-T,\infty)$ are outlined in blue and red, respectively. Singularities of the source (the black line) lie in a compact subset  of $K(z,\theta,\beta) \cap (\R^3 \times (-T,\infty))$ for large $T>0$.}\label{efigg34356}
\end{figure}
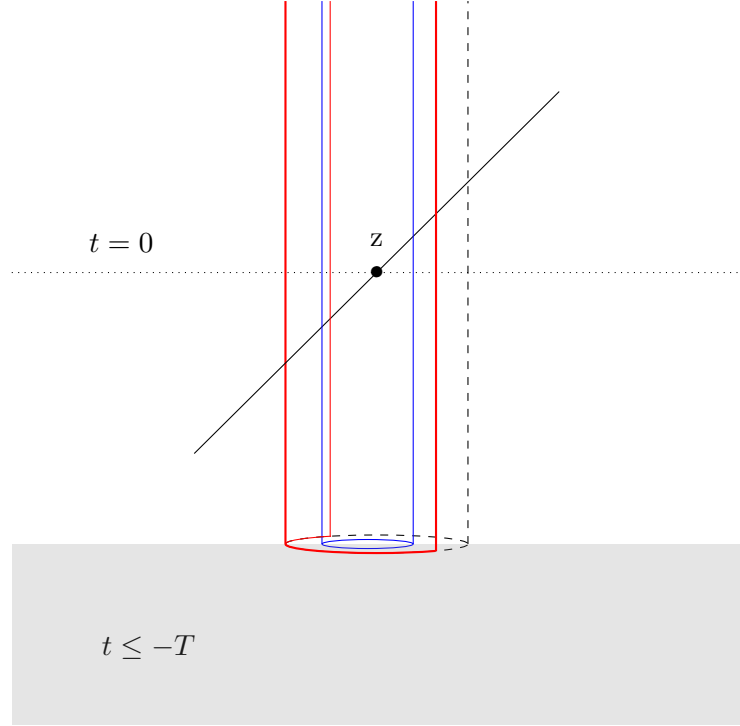

%The smooth initial condition means that no external singularities propagate into the system from outside. That is; the only singularities present in $u$ are the ones generated by the source $\chi S$. 
%A single measurement corresponds to the pair $((z,\theta,\beta),\Phi(z,\theta,\beta))$. 
%
%Notice that as $t$ grows the cross-section of the set $W_T$ expands with the speed of light in vacuum. As in the scalar case, the generated singularities propagate within the characteristic surface (see Lemma \ref{wiikki}) 
% $\{ (x,t,\xi,\omega) \in \R^4 \times  (\R^4   \setminus \{0\}) : \omega^2 =  |\xi|_g^2(x) \}$ 
% (see Lemma \ref{wiikki}) and do not escape $W_T$ due to the physicality requirement \eqref{physsi}.  
%

\subsection{The Main Result}

The main outcome of this work is Theorem \ref{maineee} below which states that the Riemannian metric $g$ in $U$ is uniquely determined by the data, provided that some natural conditions are satisfied. 
%The result is stated as Theorem \ref{maineee} below. 
In fact, for each $z\in U$ it suffices to consider data for velocities $\beta$ in a specific subinterval $\mathcal{I}_z \subset  (0,1)$ with $\sup \mathcal{I}_z  $ strictly less than $c=1$, the speed of light in vacuum. 
If the maximum phase velocity in the closure  $\overline{U}$ is strictly less than the speed of light in vacuum then the dependence on $z\in U$ in the interval above can be dropped (see Remark \ref{labne}). 
The requirements for the metric are as follows:

\begin{definition}\label{admi}
Let $g$ be a Riemannian metric in $\R^3$ and let $U\subset  \R^3$ be an open bounded set.    %Define %For $\theta \in \S^2$ let $V_{\theta}$ stand for the superluminal region in that direction of motion. That is; 
%\[
%V_{\theta} :=  \{ x \in \R^3 : |\theta|_g(x) > 1 \}.
%\]
%For $ (z,\theta,\beta) \in U \times \S^2 \times (0,1)$ we let $\mathcal{B}K(z,\theta,\beta) \subset \R^3$ be the set of points where a particle that moves through $z$ with velocity $\beta \theta$ breaks the phase-velocity barrier of the material:
%\[
%\mathcal{B}K(z,\theta,\beta):= \big\{ y \in z + \R \theta :  | \theta |_g(y ) = \frac{1}{\beta} \big\}. %\quad  (z,\theta,\beta) \in U \times \S^2 \times (0,1). 
%\] 
We say that the metric $g$ is admissible with respect to $U$ if the following conditions hold: 
\begin{enumerate}[(i)]
\item Propagation speed of waves is everywhere less than or equal to the speed of light in vacuum:  \[   |\theta|_g (x)  \geq 1,  \quad \forall (x,\theta) \in \R^3 \times \S^2. \]  \label{memma1}
\item Propagation speed of waves in $U$ is strictly less than the speed of light in vacuum:  
\[ |\theta|_g(z) > 1,  \quad \forall (z,\theta) \in U \times \S^2.  \label{memma2}\] 
\end{enumerate}
%\[
%\inf\{ |\theta|_g (z) : (z,\theta) \in U \times \S^2 \} \geq  \frac{1}{\beta_0}
%\]
 %and $| \theta |_g (z) > \frac{1}{\beta}$. 
%that is; 
%\[
 % \nabla_\theta  |\theta|_g \neq 0, \quad  \forall x \in  \mathcal{B}K(z,\theta,\beta). 
 % \]
\end{definition} 
To avoid technical difficulties, the following lower bound for speeds $\beta$ is considered in the main theorem. 
  
\begin{definition}
Let $g$ be admissible metric with respect to $U\subset \R^3$. 
For  $z\in U$ and a bounded set $W \subset \R^3 $ containing $U$ we define 
\[
J_z := \Big( \max_{\theta \in \S^2} \frac{1}{|\theta|_g(z) } , 1\Big), \quad \beta_{\partial W} (z;g) :=  \inf \big\{ \beta \in  J_z  :  \beta  \text{dist}_g ( z, x) >  |x-z|_{eucl} , \ \forall x \in \partial W \big\} , % \quad J_z := \Big( \max_{\theta \in \S^2} \frac{1}{|\theta|_g(z) } , 1\Big), 
\]
where $| \cdot |_{eucl} $ is the Euclidean norm. 
\end{definition}
\begin{remark}
It follows from compactness of $\partial W$ that   $\beta_{\partial W} (z;g) < 1$. Moreover, if propagation of waves in the closure $\overline{U}$ is strictly slower than in vacuum, we have $\sup_{z\in U} \beta_{\partial W} (z;g)<1$. 
\end{remark}

Below $M_3(X)$ refers to $3\times 3$ matrices with entries in the class $X$. In orthonormal Euclidean coordinates we identify a (1,1)-tensor $T^k_j  \frac{\partial}{\partial x^k} \otimes dx^j$, $T^k_j \in X$, with the matrix $[T^k_j] \in M_3( X) $.  
%A light-like geodesic that initiates from $z \in U$  $\beta_{\partial W} (z;g)$ 
 %
 %We identify $(1,1)$-tensors $T^k_j \partial_j \otimes dx^j$ in $\R^3$ with $3\times 3$ matrices $[T^k_j]$ 
%
%
The sources $S_j = S_j( \ \cdot \   ; z,\theta,\beta)$ are required to satisfy for every $ (z,\theta,\beta) \in U\times \S^2 \times (0,1)$, $j=1,2,3$ the following conditions:
%For every $(z,\theta,\beta) $ in $U\times \S^2 \times (0,1)$, $T>0$, we let 
\begin{equation}  \label{meq1} 
\begin{split}
%&S_\lambda( \ \cdot \   ; z,\theta,\beta) = S_{\lambda,j} ( \ \cdot \   ; z,\theta,\beta) dx^j, \\ 
& S_j  \in I_{cl}^{r+1/2} ( \R^4 ; N^*K(z ,\theta,\beta)),  \quad  r\in \R, \quad  \text{i.e.}  
\\ &    S_j(x,t) = \int_{\R^3} e^{i(x-z-t \beta \theta)\cdot \xi} a_j(x,t, \xi ) d\xi , \quad  a_j = a_j ( \ \cdot \ ; z,\theta,\beta) \in S^{r}_{cl} ( \R^4 \times \R^3),
%\quad    S_{j} ( \ \cdot \   ; z,\theta,\beta)    \in I^m_{comp}( \R^4 ; N^*K(z,\theta,\beta) ), %\quad \text{(i.e. $S_j $ conormal to $K(z,\theta,\beta)$}) \label{meq1} 
\end{split}
\end{equation}
\begin{align}
&\text{singsupp} ( S_{j} ( \ \cdot \   ; z,\theta,\beta)) \subset B(0,R_\beta) \times \R, \quad \text{(cf. the cut-off in \eqref{cytor}) }  \label{meq2} \\
%\quad \text{for continuous} \quad \beta \mapsto R_\beta >0 \\
&  \text{$(a_{1,r} , a_{2,r} , a_{3,r}) (z,0,\xi)  \neq 0, \quad  \forall (\xi,\omega) \in  N^*K(z,\theta,\beta)  \cap L_{(z,0)} \R^4$ \quad  $( \omega = \pm |\xi|_g)$  }  \label{meq3}
% \subset  \bigcup_{\substack{ j=1,2,3 }}  WF(S_{j} ( \ \cdot \   ; z,\theta,\beta)) \cup -WF(S_{j} ( \ \cdot \   ; z,\theta,\beta))  \label{meq3} ,
%&  j=1,2,3, \quad \forall (z,\theta,\beta) \in U\times \S^2 \times (0,1),
\end{align}
 %for the positively homogeneous principal part $a_{j,r}$ of $a_j$ of degree $r$, 
where $a_{j,r}$ is the positively homogeneous principal part  of the symbol $a_j \sim a_{j,r} + a_{j,r-1} +a_{j,r-2} + \cdots$, $  L_{(z,0)} \R^4 :=  \{ (\xi,\omega) \in T^*_{(z,0)} \R^4 \setminus \{0\} : |\xi|_{g_\lambda}^2(z) = \omega^2\} $ and the radius $R_\beta>0$ depends continuously on $\beta$. 
 %for continuous $\beta \mapsto R_\beta >0$ and 
 %\[
% $\text{supp} (S_{\lambda,j})  \subset W \times (-T,\infty)$ 
 %\]
 %and 
  %$\text{supp} (S_{\lambda,j})  \subset \{ (x,t) : \sqrt{x \cdot x} < R \}$ and 
%\begin{align}
%&\text{supp} (S_{\lambda,j})  \subset \{ (x,t) : \sqrt{x \cdot x} < R \}, \quad j=1,2,3, \\
%&       N^*K(z,\theta,\beta)  \cap L_{(z,0)} (\R^4;g_\lambda) \subset  \bigcup_{\substack{ j=1,2,3, \\ \epsilon = \pm }}  \epsilon WF(S_{\lambda,j} ( \ \cdot \   ; z,\theta,\beta))   ,  
%\end{align}
%where $  L_{(z,0)} (\R^4; g_\lambda) :=  \{ (\xi,\omega) \in (T^*_{(z,0)} \R^4) \setminus \{0\} : |\xi|_{g_\lambda}^2(z) = \omega^2\} $.
%where $e \in \{+,-\}$. 
%Notice that the condition \eqref{meq3} is satisfied by any source that obeys 
%\begin{equation}
%%\tag{\ref{meq3}*}
 %T^*_{(z,t)} \R^4  \big|_{t=0} \ \cap \ \bigcup_{\substack{ j=1,2,3 }}    WF(S_{j} ( \ \cdot \   ; z,\theta,\beta)) = N^*_{(z,t)}K(z,\theta,\beta) \big|_{t=0}. 
%\end{equation}
%which holds for e.g. a Dirac delta source. 
%
The main result of the paper is the following theorem:
\begin{theorem} \label{maineee}
Let $W \subset \R^3$ be a bounded open set with smooth boundary $\partial W$ and 
 let $g_\lambda = g_{\lambda, jk}(x) dx^jdx^k $, $\lambda=1,2$ be two Riemannian metrics in $\R^3$, both admissible $($Definition \ref{admi}$)$ with respect to an open $U \subset W$. 
 Let $\Upsilon \subset \partial W$ be a stable part of the boudary $($Definition \ref{stabbel}$)$ with respect to $U$ for both metrics $g_\lambda$, $\lambda=1,2$
 %$(\text{e.g. } \Upsilon=\partial W)$ 
 and 
 %set $T>0$ such that  $\overline{U} \subset \mathbb{B}(0,T)$. 
assume that 
 \[
g_1|_x = g_2|_x  \quad \forall x \in \Upsilon.
\]
Consider two differential operators $P_\lambda \in \mpsc{2}$, $\lambda=1,2$ of the form
\[
P_{\lambda}  (x,t,D)   = \left(  \partial_t^2 -  g_\lambda^{jk} (x)   \frac{\partial}{\partial x^j} \frac{\partial}{\partial x^k} \right) I   + F_\lambda (x,D_x) , 
%r_\lambda (x,D_x) ,
\]
where the entries $(F_\lambda)^k_j $, $j,k=1,2,3$ of $F_\lambda$ are first order operators on $\R^3$, and sources $S_\lambda(x,t)  := S_{\lambda,j}( x,t ;z,\theta,\beta) dx^j$ that satisfy (\ref{meq1}-\ref{meq3}). 
%\[
%f_\lambda \in M_3( \Psi^1_{cl} (\R^3) ).
%\]
%is a smoothly varying $3\times 3$ matrix of the form
%\[
%r^{k}_j (x,\xi) = a_{j}^{k,l}(x) \xi_l  + b_j^k(x)  \quad a^{k,l}_j,b^k_j \in C^\infty( \R^3), \quad  j,k,l = 1,2,3. 
%\]
%Set $T>0$ satisfying $\overline{U} \subset \{ x \in \R^3 : \sqrt{x \cdot x } < T\}$. 
%
%Denote $S_\lambda(x,t)  := S_{\lambda,j}( x,t ;z,\theta,\beta)  dx^j$ 
%For  $T >0$ satisfying 
%\[
% \text{supp}(S_\lambda( \ \cdot \ ;z,\theta,\beta) )  \subset \{ x : \sqrt{x \cdot x} < T\}  \times (-T,\infty) , 
%\]
Let $u_\lambda(x,t) := u_{\lambda,j}( x,t ;z,\theta,\beta) dx^j $  solve 
\begin{align}
& P_\lambda u_\lambda  - S_\lambda   \in C^\infty( \R^4; \R^3)   ,   \\
& u_{\lambda}   |_{\R^3 \times (-\infty,-T]} \in C^\infty( \R^3 \times (-\infty,-T] ; \R^3). 
%&W_T:=    \{ (w+x, t-T ) \in \R^4 : \sqrt{x \cdot x} < t , \ w \in W \}
%\{ (x,t) \in \R^4 : t>-T, \ \sqrt{x \cdot x} < 2T+t\}. 
%& (z,\theta,\beta) \in U\times \S^2 \times (0,1), \quad j=1,2,3.
\end{align}
for some $T=T_{z,\beta,\theta}>0$. 
Assume that for every $z \in U$ there is an open nonempty subinterval 
\[
\mathcal{I}_z \subset   \Big(  \beta_{\partial W} (z)   , 1 \Big), \quad \beta_{\partial W}(z) := \max\limits_{\lambda =1,2} \beta_{\partial W} (z;g_\lambda), 
\]
such that the data on $\mathcal{I}_z$  coincide for both metrics, i.e.
\[
%\Phi_1  (z,\theta,\beta) = \Phi_2  (z,\theta,\beta), \quad \forall (z,\theta, \beta) \in  \S^2 \times (\beta_z,1), 
 \bigcup_{j=1,2,3} WF (   u_{\lambda=1,j}  )|_{T(\Upsilon \times \R) }  =  \bigcup_{j=1,2,3} WF (   u_{\lambda=2,j} )|_{ T(\Upsilon \times \R)}, \quad \forall \beta \in \mathcal{I}_z, 
\]
%$( u_{\lambda,j} |_{\partial W \times \R } := \iota^* u_{\lambda,j} ( \ \cdot \ ;z,\theta,\beta) )$  
% $ \iota : \partial W\times \R \hookrightarrow \R^4)$ 
for every $(z,\theta) \in U \times \S^2$. 
%\[
%\Phi_\lambda  (  z,\theta,\beta) :=   \bigcup_{j=1,2,3} WF ( \iota^*  u_{\lambda,j} ( \ \cdot \ ;z,\theta,\beta) )  , \quad  \iota : \partial W\times \R \hookrightarrow \R^4: \iota (x,t) = (x,t) 
%\]
%
Then, $g_1|_U = g_2|_U$. 
%\[
%g_1|_U = g_2|_U. 
%\]

\end{theorem}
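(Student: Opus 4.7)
The strategy decomposes into four stages: scalar reduction via Lemma \ref{wiikki}, identification of the Cherenkov flow-out by propagation of singularities, extraction of the boundary distance function from the wave-front data, and a rigidity argument to conclude $g_1 = g_2$ on $U$.

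\emph{Scalar reduction and propagation.} Lemma \ref{wiikki} provides a microlocal parametrix relating the vector operator $P_\lambda = \square_{g_\lambda} I + F_\lambda$ to the scalar wave operator $\square_{g_\lambda}$ via a non-degenerate similarity, which preserves wave front sets. Thus $\bigcup_{j} WF(u_{\lambda,j})|_{T(\Upsilon\times\R)}$ equals $WF(u^{sc}_\lambda)|_{T(\Upsilon\times\R)}$ for a scalar solution $u^{sc}_\lambda$ of $\square_{g_\lambda} u^{sc}_\lambda \equiv S^{sc}_\lambda \pmod{C^\infty}$, where $S^{sc}_\lambda$ is conormal to $K(z,\theta,\beta)$ with non-vanishing principal symbol on $N^*K \cap L\R^4$ by \eqref{meq3}. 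H\"ormander's propagation of singularities for the real-principal-type $\square_{g_\lambda}$, combined with past regularity $u^{sc}_\lambda|_{t\leq -T} \in C^\infty$, yields
\[
WF(u^{sc}_\lambda) = WF(S^{sc}_\lambda) \,\cup\, \Phi_+\!\left(WF(S^{sc}_\lambda)\cap L\R^4\right),
\]
where $\Phi_+$ is the forward Hamilton flow. At $(z+s\beta\theta, s) \in K$, the intersection $N^*K \cap L\R^4$ is the Cherenkov cone $\{(\xi,-\beta\theta\cdot\xi) : |\xi|^2_{g_\lambda} = \beta^2(\theta\cdot\xi)^2\}$, non-empty precisely on the superluminal stretch $\{s : \beta|\theta|_{g_\lambda}(z+s\beta\theta) > 1\}$. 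Each bicharacteristic projects to a unit-$g_\lambda$-speed geodesic in $\R^3$ whose time parameter equals its $g_\lambda$-arclength.

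\emph{Boundary data as lens data.} A covector in $T^*(\Upsilon\times\R)$ belongs to the data iff it is the $T(\Upsilon\times\R)$-restriction of a characteristic covector at some $(y,t)\in\Upsilon\times\R$ whose backward bicharacteristic meets the Cherenkov cone at some $(z+s\beta\theta, s)$; equivalently, there is a unit-$g_\lambda$-speed geodesic $\gamma:[0,t-s]\to\R^3$ from $z+s\beta\theta$ to $y\in\Upsilon$ obeying the Cherenkov-angle identity $\beta\, g_\lambda(\theta,\dot\gamma(0)) = 1$. The matched data for $\lambda=1,2$ thus identify the exit points, exit times, and exit covectors of all such Cherenkov geodesics --- the full Cherenkov lens map. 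The hypothesis $\beta \in \mathcal{I}_z \subset (\beta_{\partial W}(z), 1)$, namely $\beta\, d_{g_\lambda}(z,y) > |y-z|_{\mathrm{eucl}}$ for every $y\in\partial W$, is used to exclude competing first-arrival contributions from source times $s\neq 0$, so that the first-arrival time at each reachable $y \in \Upsilon$ equals $d_{g_\lambda}(z,y)$; the incoming direction at $z$ is then prescribed by the Cherenkov-angle identity, which as $\theta$ varies over $\S^2$ sweeps out all unit $g_\lambda$-directions at $z$. Hence for every $z \in U$ the data recover the boundary distance function $y\mapsto d_{g_\lambda}(z,y)$ together with exit covectors, identically for $\lambda = 1, 2$.

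\emph{Rigidity and main obstacle.} Since $g_1|_\Upsilon = g_2|_\Upsilon$ and the interior-to-boundary distance functions $d_{g_\lambda}(z,\cdot)|_\Upsilon$ agree for every $z \in U$, a standard first-variation-of-length argument at each $z\in U$ (differentiating the matched distance functions in varying boundary directions) yields $g_1(z)=g_2(z)$. The principal technical difficulty lies in the previous step: the minimiser $s^*=s^*(z,\theta,\beta,y)$ of $s\mapsto s + d_{g_\lambda}(z+s\beta\theta,y)$ is defined only implicitly by the Cherenkov-angle equation, and the delicate point is to verify rigorously that the hypothesis $\beta > \beta_{\partial W}(z;g_\lambda)$ forces $s^*=0$ for the earliest-arrival geodesic to every relevant $y \in \Upsilon$, thereby turning the wave-front-set equality cleanly into pointwise equality of boundary distance functions.
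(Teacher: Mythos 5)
Your high-level outline (scalar reduction, propagation of singularities, recovery of distance information, rigidity) points in the right direction, but the proposal skips the two ideas that make the paper's argument actually close, and it mischaracterizes the role of the hypothesis $\beta > \beta_{\partial W}(z)$.

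First, a single source parameter $(z,\theta,\beta)$ produces on each superluminal stretch a full Cherenkov \emph{cone} of characteristic covectors, not one. The data $\bigcup_j WF(u_j)|_{T(\Upsilon\times\R)}$ for a single $\theta$ therefore contains a one-parameter family of arriving fronts, and there is no way to read off ``the'' exit time or exit covector of a particular geodesic from it. The paper handles this by \emph{intersecting} data coming from an antipodal pair $(\theta,\tilde\theta)$ chosen on the loop $M_\beta\subset\S^2$ of directions compatible with a target covector $-\flat_{g_1}v$; the intersection $N^*K(z,\theta,\beta)\cap N^*K(z,\tilde\theta,\beta)\cap L^+(\R^3;g_1)$ is exactly the ray $\{(z,0;-h\flat_{g_1}v,h):h>0\}$ (equation \eqref{sif}), which isolates one geodesic direction. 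Without some such pairing you cannot associate a unique interior direction to any observed boundary covector, and the ``Cherenkov lens map'' you invoke is not determined by the data.

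Second, the paper does \emph{not} recover the full boundary distance function $y\mapsto \text{dist}_{g_\lambda}(z,y)$, and you should not expect to: away from the nearest boundary point the function is not even guaranteed to be smooth, and the first-arrival structure from points $z+s\beta\theta$ with $s\neq 0$ is genuinely entangled. Lemma \ref{fd5sol345} establishes only the one-sided inequality $\text{dist}_{g_2}(z,x)\leq\text{dist}_{g_1}(z,x)$ for $x$ in a small neighbourhood of a $g_1$-nearest boundary point $x_0\in\Upsilon$, plus a matching statement for the corresponding geodesics. Proposition \ref{234235rwerweght} then obtains equality by swapping the roles of $g_1$ and $g_2$. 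This symmetrization is structural and cannot be replaced by a first-arrival argument of the sort you sketch, because your identification of $s^*=0$ would need to hold for every $y\in\Upsilon$, whereas the definition of $\beta_{\partial W}(z;g)$ is only a comparison between $g$-distance and Euclidean distance at boundary points. The paper uses $\beta>\beta_{\partial W}(z)$ for a different (and weaker) purpose: to guarantee that the propagating bicharacteristic emanating from $(z,0)$ cannot re-enter the source's wave front set (and thus possibly be annihilated, which is all Proposition \ref{pinheiro} allows) before hitting $\Upsilon$.

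There are two further gaps. Your stated equality
$WF(u^{sc}_\lambda)=WF(S^{sc}_\lambda)\cup\Phi_+(WF(S^{sc}_\lambda)\cap L\R^4)$
overstates propagation of singularities: H\"ormander's theorem gives only a containment, and the non-vanishing and persistence of the propagating singularity requires the principal-symbol computation in Proposition \ref{pinheiro}, which itself only guarantees persistence between the first and second intersections with $N^*K$. Finally, Proposition \ref{pinheiro} requires the transversality condition $g(\nabla_\theta\theta,\theta)\neq 0$ on the velocity barrier, which generic $(\theta,\beta)$ need not satisfy; the paper's proof perturbs $\beta$ and invokes Sard's theorem to obtain an approximating sequence $\beta+\beta'_j$ for which the condition holds, then passes to the limit. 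Your proposal does not address this genericity issue at all.

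The rigidity step you describe (first variation of boundary distance) is consistent with the paper's final step, which applies the Lassas--Oksanen result \cite[Lemma 2.15]{MR1889089} to obtain smoothness of $x\mapsto\text{dist}_{g_\lambda}(z,x)$ near $x_0$ and then polarizes to recover $g_\lambda|_z$ from $|\cdot|_{g_\lambda}$ on an open cone.
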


\begin{remark}\label{labne}
Provided that  \eqref{memma2} in Definition \ref{admi} is substituted by the stronger condition  
\[
|\theta|_g(z) >1, \quad \forall (z,\theta) \in \overline{U} \times \S^2,
\] 
 the compactness of $\overline{U}$ yields $\sup_{z\in U} \beta_{\partial W}(z) < 1$ and we can choose $\mathcal{I}_z = \mathcal{I}$ independently on $z\in U$ for some $\mathcal{I} \subset (0,1)$ with $\sup \mathcal{I} < 1$. 
This means that it suffices to make measurements with moderate speeds $\beta$ to obtain the result above. 
%In a numeric or experimental setup one would approximate $U$ with a finite set of points $z= z_1,\dots,z_m$ and choose a preferred interval $\mathcal{I} $ in the regime $ \beta > \max\{ \beta_{\partial W}(z_1),  \dots , \beta_{\partial W}(z_m)\}$. 
\end{remark}

\subsection{Background and Previous Work}

%Short-wave asymptotics of the solution of a problem on a moving point source in an inhomogeneous nonstationary medium-\cite{MR918957} Reconstruction method for moving point source-\cite{MR4121409} 
%
%Distance-difference-saksala1\cite{MR3978249}
%Stability of boundary distance representation and reconstruction of {R}iemannian manifolds-\cite{MR2262750}
%On the reconstruction of a {R}iemannian manifold from boundary data: theory and plan for a numerical experiment-\cite{MR2749311}
%Recent progress in the boundary control method-\cite{MR2353313}
%Boundary control and quasiphotons in a problem of the reconstruction of a {R}iemannian manifold from dynamic data \cite{MR1193677}

Cherenkov radiation is named after Pavel Cherenkov who shared the 1958 Nobel Price in Physics for its discovery. The historical background is briefly discussed in \cite{Bolotovskii:2009}. 
The phenomenon is applied in particle detectors (see the review \cite{YPSILANTIS199430} and references therein) 
% \cite{Grupen:2012zpa}  
and in detection of biomolecules. Applications of Cherenkov luminescence in medical imaging and radiotherapy are studied extensively. \cite{RuggieroAlessandro2010Clio}, \cite{invivo}, \cite{lumi}, \cite{PratxGuillem2018IClb}, \cite{27ad72c5d1da42deb078fcc0ccdb8caa}
For Cherenkov radiation as a quantum effect in vacuum, see \cite{osti_6542034}. 
%The proposed techniques rely on measuring ``glow'' created by radioactive isotopes within tissue.  
Cherenkov radiation is analogous to so-called Askaryan radiation, first observed experimentally in 2000. \cite{Saltzberg:2000bk}
See \cite{rocksalt}, \cite{PhysRevLett.96.171101}, \cite{Lehtinen:2003xv} for subsequent research. 
%Askaryan radiation is applied in detection of high-energy neutrinos. 

Inverse problems related to Cherenkov or Askaryan radiation appears to be explored very little, if at all. 
There are, however, several studies on inverse problems for particle flows and analysis on wave fronts of propagating waves that are related to this paper or use analogous microlocal techniques. These include 
inverse problems related to interaction of waves \cite{MR3800791}, \cite{MR3800791}, \cite{feizmohammadi2020inverse}, \cite{MR3995093}, \cite{chen2019detection}, \cite{Chen:2020zdb}, \cite{MR3812861}, \cite{hintz2021inverse}, 
single scattering inverse problems  \cite{MR2654781}, \cite{dehoop2020foliated}, \cite{MR3310276}, \cite{doi:10.1137/130931291},  and 
inverse problems for particle models \cite{MR2495267}, \cite{MR1391523}, \cite{MR1413417}, \cite{MR4108206}, \cite{MR2094548}, \cite{MR2180309}.
For studies with relativistic particles, see \cite{MR3810149}, \cite{MR3411742}, \cite{MR3272336}, \cite{MR3139296}.
%
%, 
%
%
%
%%Although microlocal analysis and propagation of singularities are nowadays standard topics in theory of partial differential equations, 
%Inverse problems related to Cherenkov or Askaryan radiation appears to be explored very little, if at all. Somewhat related topics are 
%%This is a standard model in the field 
%%(See e.g. \cite{MR2353313}, \cite{MR1889089}, \cite{MR3978249}, \cite{MR2262750}, \cite{MR3372472} and references therein) 
%several  methods for recovering the metric have been developed.  
%For instance, boundary distance representations (See \cite{MR1889089} and references therein) or distance difference representations \cite{MR3978249} are efficient tools in static geometries, whereas 
% in globally hyperbolic Lorentzian manifolds the metric can be recovered by following the passive measurement scheme in \cite{MR3802298}. 
% 
%
%Backscattering with point sources is explored in \cite{MR3372472}. 
Some of the methods applied in this work have points of resemblance in recovery of singularities in scattering theory which is a well-studied topic, especially in the Euclidean context. \cite{Greenleaf-Uhlmann}, \cite{MR2578561}, \cite{MR1260297}, \cite{MR1084969}, \cite{MR1842046}, \cite{MR1662267}, \cite{MR1617700}, \cite{MR2942743}
%Some of the methods used have points of resemblance even though the topic is different. 

We refer to \cite{FIO1},\cite{FIO2},  \cite{MR610185}, \cite{10.1007/BFb0074191} \cite{Melrose-Uhlmann}, \cite{MR1040963} for theory on FIOs. 
See also the preceding works \cite{MR97628}, \cite{MR115010},  \cite{MR0265748}, and \cite{Maslovi1965, MR1109496}.  

\subsection{Acknowledgements}
I would like to thank Academy Professor Matti Lassas for his help and guidance during the project.
This work was financially supported by the ATMATH Collaboration and Academy of Finland (grants 336786 and 320113).

\subsection{Illustrative Example}\label{brieffi}
We give a short introduction to Cherenkov radiation in flat geometry which is the space-time that corresponds to homogeneous isotropic medium. 
The model in this example is trivial from a geometric point of view. 

Let us consider the following forward propagating scalar wave:
\begin{align}
&( \partial_t^2 - k^2 \Delta ) u (x,t)  = \chi(x) S_0(x,t)   , \quad  u \in \mathcal{D}' ( \R^4) , \quad 0<k<1, \\ 
& u|_{\R^3 \times (-\infty ,T] } \in C^\infty( \R^3 \times (-\infty ,T] ) ,  \\ 
&S_0  :=  \delta(x^1- t \beta) \delta(x^2) \delta(x^3). 
%\quad  \chi|_{B(0,1)} = 1,
\end{align}
($z= 0$, $\theta= (1,0,0)$) Here $\Delta = \delta^{jk} \frac{\partial}{\partial x^j} \frac{\partial}{\partial x^k}$ and $\chi \in C^\infty_c(\R^3)$, $\chi|_{B(0,1)} = 1$. 
The characteristic manifold $L\R^4$ for the wave operator is 
\[
\text{Char} ( \partial_t^2 - k^2 \Delta) = \{ (x,t,\xi,\omega) \in \R^4 \times ( \R^4 \setminus \{0\} ) : \omega^2 = k^2 |\xi|^2  \}. 
\]
The wave front set of the wave splits (see e.g. \cite{Duistermaat}) into the static and propagating parts (cf. the discussion in Section \ref{dfsoerew}). That is;
\[
\begin{split}
WF(u) \subset WF ( \chi  S_0 ) \  \cup  \  \Lambda_f \circ  WF( \chi S_0 )  
%=  N^*_\chi K  \ \cup \ \Lambda_f' \circ N^*_\chi  K  , \\
\end{split}
\]
where 
 $\Lambda_f  \subset (T^* \R^4 \times T^* \R^4 )  \setminus \{0,0\} $ stands for the forward propagating flow-out canonical relation generated by $( \partial_t^2 - k^2 \Delta)$, i.e. 
\[
\Lambda_f = \bigg\{  \Big(   x \mp   r k v(\xi)  , t  + r  ,  \xi , \pm k |\xi|  \  ; \ x,t,\xi,  \pm k |\xi|    \Big) :   r \geq  0, \ (x,t,\xi) \in \R^4 \times \R^3 \bigg\} , \quad v^j(\xi) : =   \frac{\xi_k \delta^{jk} }{ |\xi| } .
% \subset \text{Char} ( \partial_t^2 -k^2 \Delta),   %v^j(\xi) =   \frac{\xi_k \delta^{jk} }{ |\xi| }
\] 
%$ v^j(\xi) : =   \frac{\xi_k \delta^{jk} }{ |\xi| } $, 
%$\xi^\sharp = \delta^{jk} \xi_k \partial_j$, 
Here the composition $\Lambda_f \circ  WF( \chi S_0 ) $ stands for
\[
\Lambda_f \circ  WF( \chi S_0 )  := \{ (x,t,\xi,\omega) : \big((x,t,\xi,\omega), (y,s,\eta,\rho) \big) \in \Lambda_f, \  (y,s,\eta,\rho) \in  WF( \chi S_0 )  \} . 
\]
One checks that 
\[
\begin{split}
WF ( \chi  S_0 )  &= N^* K(0,(1,0,0) ,\beta) \  \cap  \ (  \text{supp} ( \chi ) \times \R  \times \R^4 ) \\ 
%& N^* K(0,(1,0,0) , \beta) \  \cap \  ( \{ \chi \neq 0 \} \times \R  \times \R^4 )  
%\\ 
&= \big\{   (t\beta,0,0, t   ,   \xi,-\beta \xi_1)  : t\in \R , \  \xi \in \R^3 \setminus \{0\}    \big\}  \  \cap  \ (  \text{supp} ( \chi ) \times \R  \times \R^4 ). 
\end{split}
\]
For $\beta < k$ we deduce 
\[
 | \beta \xi_1 | \leq \beta |\xi| < k |\xi |  \quad  \Rightarrow \quad   - \beta \xi_1  \neq \pm k |\xi| \quad \Rightarrow \quad     WF ( \chi  S_0 ) \cap  \text{Char} ( \partial_t^2 - k^2 \Delta)   = \emptyset. 
\]
%where the last implication follows from the definition
Thus, $\Lambda_f \circ WF ( \chi  S_0 ) = \emptyset$ and we obtain 
\[
WF(u) \subset  WF ( \chi  S_0 ) , \quad \text{for} \quad  \beta < k. 
\]
That is; in the regime $\beta< k$ no singularities radiate from the source. 
%(cf. Figure \ref{asd34iuw2222}). 
 If $\beta > k$, then there exist bicharacteristics through $WF ( \chi  S_0 )$ and the composition  $ \Lambda_f \circ WF ( \chi  S_0  ) $ is non-trivial. In fact, the projection of it to $\R^4$ is a union of the sets 
%\[
%( \Lambda_f \circ WF ( \chi  S_0  ) ) %=   : x \in \text{supp} ( \chi) , \ x= (t \beta,0,0), \ t\in \R, \  \} ) ,
%\]
%where $Q_{x,t}$ stands for  the 2-dimensional manifold
\[
\begin{split}
 Q_{x,t} =  \bigg\{ (x,t) + r  (  k  \tilde\theta  , 1 )  : \tilde\theta \in \S^2, \   \tilde\theta^1 \left( \frac{\beta^2}{k^2} - 1 \right)^{\frac{1}{2}} = |( \tilde\theta^2, \tilde\theta^3)| , \ r >0 \bigg\} .
 %= N^*  \Big\{ ( t\beta   ,0,0,t) + r ( \mp k \tilde\theta, 1) : 
 \end{split}
\]
over points $(x,t)$ that satisfy $x \in \text{supp} ( \chi) , \ x= (t \beta,0,0)$.  
Each of these manifolds corresponds to propagating singularities generated at the associated point $(x,t)$ by the source.   
The equation $\tilde\theta^1 \left( \frac{\beta^2}{k^2} - 1 \right)^{\frac{1}{2}} = |( \tilde\theta^2, \tilde\theta^3)|$ corresponds to the Frank-Tamm formula in physics. 
%For a fixed time the 
%The projection of $Q_{x,t}$ into $\R^3$ is a 2-dimensional cone with apex at $x$.  
%Notice that the intersection of $\cup_{(x,t)\in K} Q_{x,t}$ with $\R^3 \times \{t\}$ for fixed time $t$  
For $\beta = k$ the cone collapses into a line. This particular case is problematic in the microlocal framework as it fails to satisfy conditions required in the standard FIO calculuses. 

\subsection{Geometric Example: Electromagnetic Waves in Anisotropic Material with Scalar Wave Impedance}\label{eleex}
The objective of this example is to explicate how the equation
\[
\left(\partial_t^2 - g^{jk} \frac{\partial}{\partial x^j} \frac{\partial}{\partial x^k} \right) I v    + Fv  \equiv  S  \mod C^\infty(\R^4;\R^3) , 
\]
is linked to electromagnetism. 
Consider a system of moving charges in dielectric anisotropic medium with smoothly varying inhomogeneities. The model is described by the Maxwell's equations
\begin{align}
 \curl E(x,t) + \partial_t B(x,t) &= 0, \label{11aaekama} \\
 \dive  B(x,t)  &= 0, \\
 \curl H (x,t)  - \partial_t  D(x,t) &= J(x,t),\label{11aakolmama} \\
 \dive D(x,t)   &= \rho(x,t) , \label{11aavikama}  
 \end{align}
 where $E(x,t)= E^j(x,t) \frac{\partial}{\partial x^j}$, $B(x,t) =B^j(x,t)  \frac{\partial}{\partial x^j}$, $D(x,t) = D^j(x,t) \frac{\partial}{\partial x^j}$, $H(x,t) = H^j(x,t)  \frac{\partial}{\partial x^j}$, $\rho(x,t)$, and $J(x,t) = J^j(x,t)  \frac{\partial}{\partial x^j}$ are the associated electric field, magnetic field, electric displacement field, auxiliary magnetic field, charge density, and current density, respectively. 
% Suppose the medium is stationary in time, or changes slowly compared to the speed of the moving perturbation.
Properties of the medium are encoded in the smooth $(1,1)$-tensors 
\begin{align}
\varepsilon(x) &= \varepsilon_j^k(x) dx^j \otimes  \frac{\partial}{\partial x^k}, \\
\mu(x) &= \mu_j^k(x) dx^j \otimes \frac{\partial}{\partial x^k} 
\end{align}
which give connections between the fields via 
 \begin{align}
 D(x,t) &= \varepsilon (x) E(x,t),  \\
 B(x,t)& =\mu(x) H(x,t).
\end{align}
For lossless, optically inactive materials it is reasonable to require the tensors $\varepsilon(x)$ and $\mu(x)$ to be real valued, symmetric and non-degenerate. 
In addition, we assume that the material has scalar wave impedance. By definition, this means that there is a smooth scalar function $\alpha(x)$ such that $\mu(x) = \alpha^2(x) \varepsilon(x)$. 
 Consider a charge carrier (e.g. an electron) or some other non-smooth charge perturbation moving along a straight line in $\R^3$ through anisotropic dielectric medium with smoothly varying inhomogeneities. 
Let $\theta \in \S^2$, $\beta \in (0,c)=(0,1)$, and $z\in \R^3$ be the direction of motion, speed, and location at $t=0$ for the moving charge perturbation, respectively. Again, we work in a natural unit system that has $c=1$ as the speed of light in vacuum. 
The perturbation signal is encoded in the charge density $\rho(x,t)$ as an oscillatory superposition
\begin{align}
\rho(x,t) =  \int_{\R^3} e^{i (x-z-t\beta \theta)\cdot \xi} a(x,\xi) d\xi,
%=  \int_{\R^3} e^{i (x-z-t\beta \theta)\cdot \tilde\theta \omega  } a(x, \omega  \tilde\theta) d\xi , 
 \label{fdso02} 
\end{align}
where the amplitude has an asymptotic development 
\begin{equation}\label{asy6}
a(x,\xi) \sim  a_m(x,\xi)+ a_{m-1}(x,\xi) + \cdots 
\end{equation}
 into functions $a_j \in S^{j} ( \R^3 \times (\R^3 \setminus \{ 0 \}) )$. 
% Given that the expansion \eqref{asy6} is non-trivial at the trajectory $K(z,\theta,\beta):=\{ (x,t) : x= z + t\beta \theta \}$ we may consider 
For example, a single electron in vacuum corresponds to the density 
\[
\rho_{e^-}(x,t) = -e \gamma \delta(x-z-\beta t \theta)=  -e \gamma \int_{\R^3} e^{i ( x-z-t\beta \theta ) \cdot \xi  } d\xi,
\]
where $e$ stands for the elementary charge and $\gamma := \frac{1}{\sqrt{1-\beta^2}}$ is the Lorentz factor. 
 It follows (see e.g. \cite{Duistermaat}) that the distribution \eqref{fdso02} is smooth outside the trajectory $K(z,\theta,\beta):=\{ (x,t) : x= z + t\beta \theta \}$ and the order of singularity at each point corresponds to decay of $a$ with respect to $\xi$. 
 The signal is a moving point-like singularity even if all the charge is not necessarily concentrated at a single point. 
 The current density obeys the continuity equation (conservation of charge)
 \[
 \dive J(x,t) + \partial_t \rho (x,t) = 0
 \]
 which follows from \eqref{11aakolmama}, \eqref{11aavikama} and the fact that $\dive \ \curl = 0$. 
 By substitution one checks that the equation is solved by 
 \[
 J(x,t) = J_I  + J_S(x,t), \quad J_I =  \left(  \int_{\R^3} e^{i (x-z-t\beta \theta)\cdot \xi} b(x,\xi) d\xi  \right) \theta
 \]
 where $b(x,\xi) \sim \sum_{j=-m}^\infty  b_{-j}(x,\xi)$ is defined recursively by 
 \begin{align}
 &b_m (x,\xi) = \beta a_m(x,\xi), \\
 & i \xi \cdot \theta b_{j-1}(x,\xi)   + \theta \cdot \nabla b_{j}(x,\xi) = i \beta \xi \cdot \theta a_{j-1} (x,\xi), \quad j=1,2,3,\dots
 \end{align}
 and $J_S(x,t)$ is any solenoidal field. For simplicity, let us assume that $J_S(x,t)$ is smooth. 
 Define a Riemannian metric $g$ by 
 \[
g^{jk}(x) :=    \frac{\delta^{jl}  \varepsilon_l^k(x)}{ \alpha^2(x) \det(\varepsilon(x))}.
\]
and set $\delta_\alpha := (-1)^k * \alpha d \alpha^{-1} *$ where $*$ is the Hodge star on $k$-forms with respect to $g$. 
%and denote the associated Hodge star operator by $*$. 
Under the requirements above the electric field $E_j = \delta_{jk} E^k$ satisfies (See \cite{Kurylev-Lassas-Somersalo2006}) 
\[
( \partial_t^2 - \Delta_\alpha)^k_j E_k = S_j, \quad j=1,2,3,
%\begin{pmatrix}
%\partial_t^2 - \Delta_\alpha  & 0 & 0\\
%0 & \partial_t^2 -  \Delta_\alpha   & 0\\
%0 & 0& \partial_t^2 -  \Delta_\alpha 
%\end{pmatrix}
% \begin{pmatrix} 
% E_1 \\
% E_2\\ 
% E_3\\ 
% \end{pmatrix}
%  =  \begin{pmatrix} 
% S_1 \\
% S_2\\ 
% S_3\\ 
% \end{pmatrix},  
%\quad (E_j = \delta^{j}_k E^k)
\]
where $\Delta_\alpha := - d \delta_\alpha - \delta_\alpha d$ and 
\begin{align}
%g^{jk}(x) :=&     \frac{\delta^{jl}  \varepsilon_l^k(x)}{ \alpha^2(x) \det(\varepsilon(x))},
%\\
S_k =&  \frac{1 }{ \alpha^2 \det \varepsilon(x) } \int_{\R^3} e^{i(x-z-t\beta\theta) \cdot \xi} c_k (x,\xi) d\xi , 
\\
 c_k(x,\xi)  \equiv& i( \xi_k-  \beta^2 (\theta \cdot \xi)  g_{jk} \theta^j  ) a_m(x,\xi) \mod  S^{m-1}. 
\end{align}
By applying Leibniz' rule, we deduce that the standard codifferential $\delta = -*d*$ differs from $\delta_\alpha$ by only terms  of order $0$. For instance, for a 1-form $\omega$ we have that 
\[
\delta_\alpha \omega = - * \alpha d \alpha^{-1} *  \omega = - * d *  \omega - * \frac{d\alpha}{\alpha} \wedge *\omega =  \delta \omega -  g^{jk} \frac{\partial_k  \alpha}{\alpha}   \omega_j . 
\]
Thus, 
 the leading terms in $\Delta_\alpha$ coincide with the ones in the Hodge Laplacian $-d\delta-\delta d$. As shown in \cite[Lemma 2.8]{MR3586566}, the leading part in the Hodge Laplacian for 1-forms is $(\partial_t^2 - g^{jk} \partial_j \partial_k) I$. In conclusion,  we have that the electric field satisfies the preferred equation:
\[
\begin{pmatrix}
\partial_t^2 - g^{jk} \partial_j \partial_k  & 0 & 0\\
0 & \partial_t^2 - g^{jk} \partial_j \partial_k   & 0\\
0 & 0& \partial_t^2 - g^{jk} \partial_j \partial_k  
\end{pmatrix}
 \begin{pmatrix} 
 E_1 \\
 E_2\\ 
 E_3\\ 
 \end{pmatrix}
 + \begin{pmatrix}
 F^1_1   & F^1_2 & F_3^1\\
F^2_1 & F^2_2  & F^2_3\\
F^3_1 & F^3_2& F^3_3
\end{pmatrix}
 \begin{pmatrix} 
 E_1 \\
 E_2\\ 
 E_3\\ 
 \end{pmatrix}
  =  \begin{pmatrix} 
 S_1 \\
 S_2\\ 
 S_3\\ 
 \end{pmatrix},  
%\quad (E_j = \delta^{j}_k E^k)
\]
where $F_j^k \in \Psi^1_{cl}(\R^3)$, $j,k=1,2,3$. 
%$\Delta_g$ is the scalar Laplace-Beltrami operator with respect to the metric
%Given non-trivial development \eqref{asy6} near $K(z,\theta,\beta)$ it follows that  $\bigcup_{j=1,2,3}WF(S_j) = N^*K(z,\theta,\beta)$. 

%\newpage

\section{Preliminaries}\label{premmi}

%\subsection*{Notational convention}

\fbox{
%\textbf{Notational convention:} 
Henceforth the symbol $\partial_j$ with integer $j$ stands for the spatial partial derivative $ \frac{\partial}{\partial x^j}$.
}

\subsection{Symbols }

Let $X$ be a smooth manifold of dimension $n$. 
The class $S^m_{\rho,\delta}(X \times \R^k) $ of symbols of order $m \in \R$ and type $(\rho,\delta) \in [0,1] \times [0,1]$  is defined as the space of functions $a \in C^\infty(X \times \R^k)$ that satisfy the following: 
For each compact $K \subset X$, $\alpha \in \N^n$ and  $\beta \in \N^k$ there exists $C = C_{K,\alpha,\beta,a}\geq 0$ such that
\[
| \partial_x^\alpha \partial_\xi^\beta a(x,\xi) | \leq C (1+ |\xi|)^{m-\rho |\beta|+\delta |\alpha|}, \quad \forall (x,\xi) \in K \times \R^k.
\]
Only symbols of type $(1,0)$ are considered and hence the notational simplification $S^m (X \times \R^k) := S_{1,0}^m (X \times \R^k)  $. It follows from the definition that $S^{m}_{\rho,\delta}(X \times \R^k)  \subset S^{m'}_{\rho,\delta}(X \times \R^k) $ for $m\leq m'$ and we define $S_{\rho,\delta}^{-\infty }(X \times \R^k)$ as the limit set $S^{-\infty }_{\rho,\delta} (X \times \R^k) := \bigcap_{m \in \R} S^m_{\rho,\delta}  (X \times \R^k)$. 
It is common to treat symbols modulo a residual term in $S^{-\infty}_{\rho,\delta}$. 
For a sequence of symbols $a_j \in S^{m_j}_{\rho,\delta}(X \times \R^k) $, $j=1,2,3,\dots$, with $m_j\searrow -\infty$ there exists $a \in S^{m_1}_{\rho,\delta}(X \times \R^k) $, unique up to a term in $S^{-\infty} (X \times \R^k)$, such that
\[
a- \sum_{j=1}^{l} a_j \in S^{m_{l+1}}_{\rho,\delta}(X \times \R^k) , 
\]
for every $l=1,2,3,4,\dots$  This is denoted by $a(x,\xi) \sim  \sum_{j=1}^{\infty} a_j(x,\xi)$ and referred to as an asymptotic expansion or asymptotic development of $a$. 

The set of classical symbols  $S_{cl}^m(X \times \R^k)\subset S^m(X \times \R^k)$ is defined as the elements $a \in S^m(X \times \R^k)$ of the form
\begin{equation}\label{asdiieierw234234}
a(x,\xi)  \sim   \sum_{j=1}^{\infty} (1- \chi(\xi) ) a_{m-j}(x,\xi) , 
\end{equation}
where $\chi \in C^\infty_c ( \R^k)$ takes value $ 1$ in a neighbourhood of $0$ and $a_{m-j} \in C^\infty(X \times (\R^k\setminus \{0\}) )$ is positively homogeneous of degree $m-j$ with respect to the variable $\xi \in \R^k \setminus \{0\}$. 
Even though positively homogeneous functions are usually not smooth, they are treated as symbols by omitting a smoothing near the origin. The notation $a(x,\xi)  \sim   \sum_{j=1}^{\infty}  a_{m-j}(x,\xi)$ for positively homogeneous $a_{m-j}(x,\xi)$ is used also for referring to \eqref{asdiieierw234234}. 
The choice of the compactly supported cut-off function $\chi$ around $0$ is not significant as changing the function within contributes only by a term in $S^{-\infty} (X \times \R^k)$. 
See e.g. \cite{Grigis-Sjostrand} for a nice introduction to the topic. 

\subsection{Lagrangian Distributions and Fourier Integral Operators} \label{lagger}

A submanifold $\Lambda$ of a symplectic manifold $(M,\sigma)$ of dimension $2n$ is called Lagrangian if $\dim \Lambda = n$ and $\sigma_q (v,w) = 0$, for every $v,w\in T_q \Lambda$ and $q\in \Lambda$. 
The cotangent bundle $T^*X$ of a smooth manifold $X$ is a symplectic manifold with the canonical symplectic structure (see \cite[\textsection5]{Grigis-Sjostrand}).   
The space $I^r(\Lambda) = I^r( X; \Lambda)$ of Lagrangian distributions of order $r$ on $X$ associated with a conic Lagrangian manifold $\Lambda \subset T^*X \setminus \{0\}$ consists of distributions $u \in \mathcal{D}'(X)$ that can be expressed as a locally finite sum of oscillatory integrals of the form
\begin{equation}\label{89834234}
I(\varphi,a) = \int_{\R^k } e^{i\varphi(x,\xi)} a(x,\xi) d\xi  , \quad a\in S^{r-\frac{k}{2} + \frac{n}{4} }(X \times \R^k), 
\end{equation}
where $\varphi \in C^\infty( X \times \R^k) $ is a non-degenerate phase function, homogeneous of degree 1 with respect to $\xi$, such that that the manifold $\Lambda$ coincides locally with the set $\Lambda_\varphi = \{ (x,d_x\varphi(x,\xi) ) :   d_\xi \varphi(x,\xi) = 0 , \ \xi \neq 0, \ x \in X   \}$ (see e.g. \cite{Duistermaat} for details). 
%If the symbols of the oscillatory integrals are classical, we denote $
For $u \in I^r( X; \Lambda)$ the wave front set satisfies $WF(u) \subset  \Lambda$ and the asymptotic behaviour of the symbol defines the degree of regularity at given conic neighbourhood. 
For instance, if $\chi a \in S^{-\infty}(X \times \R^k)$ where $\chi \in C^\infty (X \times \R^k)$ is positively homogeneous of degree $0$ with respect to $\xi$, then the wave front set of the oscillatory integral \eqref{89834234} does not meet a conic neighbourhood inside the support of $\chi$. 
As a special case, we have $I^{-\infty} ( X; \Lambda) \subset C^\infty( X)$. If two symbols $a$ and $b$ satisfy $a-b \in S^{-\infty} ( X \times \R^k)$, i.e. $a\equiv b \mod S^{-\infty} ( X \times \R^k)$, then  $I(\varphi,a) - I(\varphi,b) = I(\varphi,a-b) \in C^\infty(X)$, that is, $I(\varphi,a) \equiv I(\varphi,b) \mod C^\infty(X)$ for the associated oscillatory integrals.
We do not distinguish between distributions that differ by a smooth term.

A special class of Lagrangian distributions is obtained by assuming that $\Lambda$ equals $N^*M$,  the conormal bundle of a submanifold $M \subset X$. Such elements are referred to as distributions conormal to $M$.
For example, the delta distribution $u(x,t) = \delta(t-x)$ on $ \R^2$ is  conormal to the diagonal $\{(x,t)  \in \R^2 : t = x\}$. 
 The class of distributions conormal to $M$ is often denoted by $I^m ( M)$, where $m$ refers to the order of the symbol. Thus, $I^m ( M) = I^{m+\frac{k}{2} - \frac{n}{4}}(X; N^*M)$, where $k$ is the codimension of $M$ in $X$.

Considering a Lagrangian distribution $f$ on $X\times Y$ as a kernel gives an operator
\[
F=F_f : C^\infty(Y) \rightarrow \mathcal{D}'(X), \quad  \langle F \phi , \psi \rangle := \langle f,   \psi \otimes \phi \rangle , \quad \phi\in C^\infty(Y), \quad  \psi \in C^\infty (X).
\] 
Functions of this form are called (standard) Fourier integral operators.\footnote{An operator and the kernel of it are often treated as the same object. }
For a closed cone $\Gamma \subset T^*Y$ that does not meet $WF_Y'(F) := \{ (y,\eta) : \exists x\in X \ \text{such that} \ (x,0;y,\eta) \in WF(f) \} $ the definition can be extended to  $\mathcal{E}_\Gamma'(X) = \{ v \in \mathcal{E}'(X) : WF(v) \subset \Gamma \}$ and further to $\mathcal{D}_\Gamma'(X) = \{ v \in \mathcal{D}'(X) : WF(v) \subset \Gamma \}$ if the operator is properly supported. See e.g. \cite[Corollary 1.3.8]{Duistermaat} for details. 
If $f\in I^m ( X \times Y;\Lambda)$ we say that the associated operator $F_f$ lies in $I^m ( X,Y;\Lambda')$. The manifold $\Lambda'$ refers to $\Lambda' :=\{ (x,\xi;y,\eta) : (x,\xi;y,-\eta) \in \Lambda\}$ which is Lagrangian with respect to the symplectic form $\sigma_X- \sigma_Y$.  
Such a manifold is referred to as the canonical relation. Conversely, if a manifold $\Lambda \subset T^*X \times T^*Y$ is a canonical relation, then $\Lambda'$ is Lagrangian manifold for the standard symplectic form $\sigma_X+ \sigma_Y$ and the kernel is a Lagrangian distribution on $X \times Y$. 
 A Fourier integral operator with a smooth kernel defines a regularizing map $\mathcal{E}'(X) \rightarrow C^\infty(X)$ which is usually identified with the zero element.  
%In this paper we choose to represent canonical relations as   
%An operator in $I^m ( X,Y;\Lambda')$ satisfies $WF'(F) \subset \Lambda'$, where $WF'$ is the operator wave front set. 

An operator that admits the diagonal $ \text{diag}(T^*X\setminus \{0\})$ as a canonical relation is called pseudo-differential operator. We denote by $\Psi^m(X)$ the class of pseudo-differential operators of order $m$ on $X$. Pseudo-differential operators with classical symbols is denoted by $\Psi^m_{cl}(X)$.
All the pseudo-differential operators considered in this paper have $X=\R^n$. 
By placing a smooth cut-off $\chi(x-y)$ to the kernel (see \cite[Remark 3.3]{Grigis-Sjostrand}) a pseudo-differential operator can be identified with a properly supported one.
%For a symbol $a\in S^m ( \R^n \times \R^n)$ we define the associated  operator $a(x,D_x) \in \Psi^m(\R^n)$ by 
%\[
%a(x,D_x) :  u \mapsto \frac{1}{(2\pi)^n} \int_{\R^n} \int_{\R^n} e^{i(x-y) \cdot \xi }  a(x,\xi) u(y)dy d\xi
%\]
%Similar notation is used in case $a(x,\xi)$ is positively homogeneous as a function of $\xi$. This, however, should be interpreted as the operator with symbol $(1- \chi(\xi) )a(x,\xi)$ where $\chi \in C^\infty(\R^n) $ is a cut-off that takes value $1$ near the origin. 
%For example, 
%\[
%|D_x| u (x) := \frac{1}{(2\pi)^n} \int_{\R^n} \int_{\R^n} e^{i(x-y) \cdot \xi } (1- \chi(\xi))  |\xi| u(y)dy d\xi.
%\]
%The choice of the cut-off is unimportant as we do not distinguish between operators that differ by a term in $\Psi^{-\infty} ( \R^n)$. 

\subsection{Transversal Intersection Calculus}
A composition of two Fourier integral operators is not necessarily well defined as a Fourier integral operator. Sufficient conditions together with associated composition calculus, often referred to as transversal intersection calculus, was developed in \cite{FIO1}, \cite{FIO2} by H\"ormander and Duistermaat.  %and generalised by Guillemin 
%(see \cite{10.1007/BFb0074191}) 
%to the case of clean intersection. 
Perhaps the most demanding one of the conditions for a composition $F \circ G$ of two operators $F\in I^m(X,Y;\Lambda_1)$ and  $G\in I^{m'}(Y,Z;\Lambda_2)$ with canonical relations $\Lambda_1$ and $\Lambda_2$ to be admissible in the framework is that the product $\Lambda_1 \times \Lambda_2$ intersects the manifold $T^*X \times \text{diag}T^*Y \times T^*Z$ transversally. 
%Generalisation to clean intersection was developed by Guillemin. 
Provided that the required conditions are satisfied, we have
\begin{align}
&F\circ G \in I^{m+m'} (X,Z ; \Lambda_1 \circ \Lambda_2),  \\
& \Lambda_1 \circ \Lambda_2  := \big\{ (x,\xi; z, \sigma) \in T^* X \setminus \{0\} \  \big| \  \exists (y,\eta)  :  (x,\xi; y, \eta) \in \Lambda_1 , \  (y,\eta;z, \sigma) \in \Lambda_2 \big\}.
%&\sigma_{m+m'}(F \circ G) = \sigma_m(F) \sigma_{m'}(G), 
\end{align} 
Moreover, 
\begin{equation}\label{symboli123}
\sigma_{m+m'}(F \circ G) ( x,\xi; z,\sigma)  = \sum_{(y,\eta) \in \Gamma} \sigma_m(F) (x,\xi ; y,\eta) \sigma_{m'}(G)(y,\eta;z,\sigma), 
\end{equation}
where $\sigma_m \in S^m/S^{m-1}$ refers to the principal symbol (i.e. the leading term in an asymptotic development) on the canonical relation and 
 $\Gamma= \Gamma_{x,\xi;y,\eta} :=  \{ (y,\eta) :  (x,\xi ; y,\eta) \in \Lambda_1, \ (y,\eta;z,\sigma) \in \Lambda_2 \}$. 
 The diagonal acts as an identity element on canonical relations:
\[
\text{diag} (T^*X\setminus \{0\}) \circ \Lambda_2 = \Lambda_2, \quad   \Lambda_1 \circ \text{diag} (T^*Y\setminus \{0\}) = \Lambda_1,
\]
Considering Lagrangian distributions on $Y$ as Fourier integral operators on $Y \times Z$ with trivial $Z$ one deduces 
\[
F  u \in I^{m+r} ( X ; \Lambda \circ \tilde\Lambda), \quad \text{for} \quad F \in I^m (X,Y;\Lambda), \quad u \in I^r(Y;\tilde\Lambda),
\]
provided that the required conditions hold. The element $\Lambda \circ \tilde\Lambda$ above refers to 
\[
\Lambda \circ \tilde\Lambda := \{ (x,\xi) : \exists(y,\eta) \in \tilde\Lambda; \  (x,\xi;y,\eta) \in \Lambda\}. 
\]
For a pseudo-differential operator $A$ we obtain  
\[
A  u \in I^{m+r} ( X ; \text{diag} (T^*\R^n \setminus \{0\}) \circ \tilde\Lambda)  ) = I^{m+r} ( X ;  \tilde\Lambda  )
\]
(cf. microlocality $WF( A  u) \subset WF(u)$).

\subsection{Distributions associated with a pair of Lagrangian manifolds.}\label{dfsoerew}
 We also consider the class $I^r( X; \Lambda_0,\Lambda_1)$ of Lagrangian distributions associated with a pair of cleanly intersecting Lagrangian manifolds $\Lambda_0,\Lambda_1 \subset T^*X$ instead of a single manifold. 
 Calculus for operators with kernels of this form was developed by Melrose and Uhlmann in \cite{Melrose-Uhlmann}.  The theory is needed for describing parametrices  of pseudo-differential operators.  
 More precisely, a parametrix $Q$ for a pseudo-differential operator $P$ of real principal type on $X$ is associated with the pair 
 \[
 (\text{diag}(T^*X \setminus \{0\}) , \Lambda_P) \subset (T^*(X) \times T^*(X)) \times (T^*(X) \times T^*(X) ), 
 \]
 of canonical relations  
 where $\Lambda_P$ consists of pairs $((x,\xi),(y,\eta)) \in (T^*X \times T^*X) \setminus \{0,0\}$ that lie in a same bicharacteristic. 
It is shown in \cite[Proposition 2.1]{Greenleaf-Uhlmann} that  for a distribution $u$ in the class $  I^{m} ( X; \Lambda)$, where the Lagrangian manifold $\Lambda \subset T^*X$ intersects the characteristic manifold of $P$ transversally and each bicharacteristic of $P$ intersects the manifold a finite number of times, the composition 
 $Qu$ is a Lagrangian distribution that corresponds to the pair 
 \[
 (\text{diag}(T^*X \setminus \{0\}) \circ \Lambda \  , \  \Lambda_P \circ \Lambda ) = ( \Lambda, \Lambda_P \circ \Lambda ) .
 \] 
Moreover, the identity \eqref{symboli123} extends to $ (\Lambda_P \circ \Lambda) \setminus \Lambda$.

 %The diagonal does not move singularities, whereas the element $\Lambda_P'$ describes propagation. 
\subsection{Matrix Notation}

We define the class $  \mps{m} $ as $3\times 3$ matrices $A = [A_j^k]_{j,k=1,2,3}$ of pseudo-differential operators $ A_j^k \in \Psi^m ( \R^4)$, $j,k=1,2,3$, of degree $m$ on $\R^4$ as entries. The symbols of $A^k_j$, $j,k=1,2,3$ form an element in 
$M_3 ( S^m( \R^4 \times \R^4 ) ) := \{ [a^k_j]_{j,k=1,2,3} : a_j^k \in S^m( \R^4 \times \R^4 ) \}$ of matrices with entries in the symbol class $S^m( \R^4 \times \R^4)$. 
%In similar fashion one defines $\mpsc{m} \subset \mps{m}$ as elements in $\mps{m}$ with classical entries. Let $S^m_{hom} \big( T^*\R^4 \setminus \{0\} \big)$  stand for the positively homogeneous symbols of degree $m$, i.e. 
%\[
%S^m_{hom} \big(T^* \R^4   \setminus \{0\} \big):= \{ f \in C^\infty ( \R^4 \times (\R^4 \setminus \{0\} )  ): f(x,t, \lambda \xi,\lambda \omega) = \lambda^m f(x,t,\xi,\omega) , \  \forall \lambda>0 \},   
%\] 
%and denote the $3\times3$ matrices with entries in this class by $M_3 \big( S^m_{hom} (T^* \R^4 \setminus \{0\} ) \big)$. 
%By definition, an operator  $A \in \mps{m}$ lies in $\mpsc{m}$ if it admits a classical asymptotic development 
%\[
%A- \sum_{r=0}^{k} a_{m-r}(x,t,D_x,D_t) \in \mps{m-k-1},  \quad  \forall k = 0,1,2,\dots
%\]
%where $a_{m-r}  \in M_3 \big( S^{m-r}_{hom} (T^* \R^4  \setminus \{0\} ) \big) $. 
%%We may just write $a \sim  \sum_{r=-m}^\infty   a_{-r} $.   
%%
%
Each $ A \in  \mps{m}$ operates on a vector-valued distribution $ S= (S_1,S_2,S_3)$, $S_j \in \mathcal{E}'(\R^4) $, 
% (resp. $S_j \in H^{s}_{comp}( \R^4)$), 
$j=1,2,3$, in the obvious way:
\[
(A S)_j  = A^k_j S_k \in  \mathcal{D}'(\R^4), \quad j=1,2,3. 
% \quad 
%\text{(resp. } H_{loc}^{s-m}(\R^4) \text{)}.
\]
This is well defined also for $S_k \in \mathcal{D}'(\R^4) $ if the entries $A^k_j$ are properly supported.     
In addition, we define the composition $AB \in  \mps{m_1+m_2}$ of  $A \in \mps{m_1}$ and  $ B \in  \mps{m_2}$ by
\[
(AB)^k_j := A^l_j B_l^k , \quad j,k=1,2,3. 
\]
Any scalar operator $ L  \in   \Psi^l ( \R^4)$ is identified with the diagonal matrix $[L\delta^k_j]_{j,k=1,2,3}$, where $\delta^k_j$ stands for the Kronecker delta. That is; 
\[
(L S)_j  =  LS_j \in  \mathcal{D}'(\R^4), \quad j=1,2,3. 
% \quad 
%\text{(resp. } H_{loc}^{s-m}(\R^4) \text{)}.
\]
for $ S= (S_1,S_2,S_3)$, $S_j \in \mathcal{D}'(\R^4) $,  
and 
\[
(L A)^k_j = LA^k_j \in  \mps{m+l} ,
% \quad \text{for} \quad  
% L  \in   \Psi^l ( \R^4),  \quad \text{and} \quad 
%A \in \mps{m}. 
\]
for $A \in \mps{m}$.

\section{Microlocal Methods}\label{micromikko}
This section is based on the techniques developed in \cite{FIO1},\cite{FIO2},  \cite{Melrose-Uhlmann} and \cite{Greenleaf-Uhlmann} for scalar operators of real principal type. The results here are modifications of the original ones. Some of the proofs are moved to Appendix \ref{appe1} as they do not differ significantly from the ones for scalar-valued operators.
The objective is to prove Proposition \ref{pinheiro} which is the main tool in this paper.

We use the notation $\pm$ to indicate a sign $+/-$ that can be chosen to be either $+$ or $-$. We also define $\mp := - \pm$. 
For a conic neighbourhood $\mathcal{V} \subset T^*\R^4 $  and  $X,Y \in  M_3 ( \Psi^m (\R^4) )$ we say that 
\[
X \equiv Y \quad  \text{microlocally in} \quad  \mathcal{V}   
\]
if 
\[
\mathcal{V} \cap WF( X^k_j - Y^k_j)  = \emptyset, \quad  \forall j,k=1,2,3,
\]
where $WF(A)$ refers to the wave front set of a pseudo-differential operator, that is, $WF'(A) = \text{diag} ( WF(A) )$ as a Fourier integral operator. 
Let us begin by factoring a vector-valued operator $P \equiv ( \partial_t^2 - g^{jk} \partial_k \partial_j ) I + F$ into first order operators. For a proof, see the appendix. 
\begin{lemma} \label{equdfkss}

Consider a differential operator 
$P \in M_3( \Psi^2_{cl}(\R^4) )$ of the form
\[
P =   (\partial_t^2 - g^{jk}\partial_j \partial_k ) +  F ,
\]
where $g$ is a Riemannian metric and $F  =    F^k_{j}(x,D_x )  \  \partial_k \otimes   dx^j$ is a stationary first order operator. 
%for some $r \in M_3( \Psi^1_{cl} ( \R^3) )$, i.e.
%\[
%P^k_j  -  (  \partial_t^2 - \Delta_g ) \delta^k_j - r^k_j (x,D_x) \in \Psi^{-\infty}(\R^4)   , \quad j,k=1,2,3.
%\]
%for some first order polynomials $r_j^k(x,\xi) = a_{j}^{k,m}(x) \xi_m  + b_j^k(x)$ with smooth coefficients $a^{k,m}_j,b^k_j \in C^\infty( \R^3)$.
%
%%Let $\Sigma^\pm :=  \{ (x,t, \xi, \omega ) \in T^*\R^4 :  \omega = \pm | \xi |_g \}$. 
%Set 
%$ 
%P := \partial_t^2 -g^{jk} \partial_j \partial_k    +   \in \mpsc{2}  
%$ on $1$-forms of $\R^3$.   % and let $P_j^k \in \Psi^2( \R^4)$, $j,k=1,2,3$ be the (global) coordinates of $P$ in $\R^3$. 
Then, for every $\epsilon>0$ there are operators 
%$ Z_j^{k} =Z_j^{k}( x,D_x,D_t)  \in \Psi_{cl}^1(\R^4)$, $W_j^k= W_j^{k}(x,D_x)  \in \Psi^1_{cl}(\R^4)$, 
$W=W_\pm  \in M_3 ( \Psi^1_{cl}(\R^4) ) $, $Z=Z_\pm \in \mpsc{1}$,  
and $h=h_\pm \in M_3 ( \Psi^0_{cl}(\R^4) )$ 
%and an open conic neighbourhood $\Gamma$ of $(x_0,t_0,\xi_0, \omega_0 ) $ in $ T^* \R^4$ 
such that %for every $j,k \in \{1,2,3\}$, 
%\begin{enumerate} 
%\item
%microlocally in $ \mathcal{V} = \{ (x,t ; \xi,\omega) : \omega < \epsilon |\xi| \}$ 
\[
 P  \equiv  (  D_t  - W ) Z , %\quad ( \text{summation over} \  l=1,2,3).
 \]
 % {\color{blue}microlocally away from $\{ (x,t ; \xi,\omega) : \xi= 0\}$} 
 and 
 \begin{align}
&W  \equiv  \pm   | D_x |_g  + h  ,  \\
&Z   \equiv - (  D_t  \pm   |D_x|_g ) - h   ,
%\item 
\end{align}
microlocally in $ \mathcal{V} = \{ (x,t ; \xi,\omega) \in T^*\R^4 \setminus \{0\} : |\omega| < \epsilon |\xi| \}$. 
% {\color{blue} away from $\{ (x,t ; \xi,\omega) : \xi= 0\}$}.
%for every $j,k \in \{1,2,3\}$.
%\begin{proof}
%. 
%\end{proof}
\end{lemma}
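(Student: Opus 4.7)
The plan is to construct the factorization by building a symbolic square root, substituting a first-order ansatz, and then eliminating the remaining error by iterative symbol construction in pseudodifferential calculus. First I would build $\Lambda := |D_x|_g \in \Psi^1_{cl}(\R^3)$, regarded as a stationary operator on $\R^4$, with principal symbol $|\xi|_g = \sqrt{g^{jk}(x)\xi_j \xi_k}$, by the standard square-root construction so that $\Lambda^2 \equiv -g^{jk}\partial_j\partial_k \mod \Psi^0_{cl}(\R^3)$. Since $\Lambda$ is independent of $(t,\omega)$ one has $[D_t,\Lambda]=0$, and I would seek $h$ stationary as well, so that $[D_t,h]=0$.

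Next I would substitute the ansatz $W = \pm\Lambda + h$, $Z = -(D_t \pm \Lambda) - h$ and expand; using stationarity and writing $s=\pm$ for the chosen sign, a direct computation gives
\[
(D_t - W)Z \;\equiv\; (\partial_t^2 - g^{jk}\partial_j\partial_k)I + R_0 I \pm (\Lambda h + h\Lambda) + h^2,
\]
where $R_0 \in \Psi^0_{cl}(\R^3)$ is the square-root remainder. Requiring $(D_t-W)Z \equiv P$ thus reduces the problem to constructing $h \in M_3(\Psi^0_{cl}(\R^4))$ satisfying
\[
\pm(\Lambda h + h\Lambda) + h^2 \equiv F - R_0 I \qquad \text{microlocally in } \mathcal{V}.
\]
I would build $h$ by an asymptotic expansion $h \sim h_0 + h_{-1} + h_{-2} + \cdots$ with $h_{-j} \in M_3(\Psi^{-j}_{cl}(\R^4))$. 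Because the principal symbol of $\Lambda$ is the scalar $|\xi|_g$ times the identity matrix, the top-order equation reads $\pm 2|\xi|_g \sigma_0(h_0) = \sigma_1(F)$, which is solved pointwise in $\mathcal{V}$ by matrix division by the nonzero scalar $|\xi|_g$. Each subsequent order yields a similar equation $\pm 2|\xi|_g \sigma_{-j}(h_{-j}) = (\text{known from } h_0,\ldots,h_{-j+1},\ F,\ R_0,\ \text{and commutators } [\Lambda,\cdot])$, again solved by the same scalar division. Multiplying by a cutoff supported in a slight conic enlargement of $\mathcal{V}$ and summing asymptotically then produces the required $h$.

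The main obstacle is not algebraic but microlocal: as an operator on $\R^4$, $\Lambda = |D_x|_g$ has a characteristic set $\{\xi = 0\}$, so division by its principal symbol is only a legitimate symbolic operation where $|\xi|$ controls $|(\xi,\omega)|$. The cone $\mathcal{V} = \{|\omega| < \epsilon|\xi|\}$ is precisely the region where $|\xi|_g$ is bounded below by a positive multiple of $|(\xi,\omega)|$, so each division produces a symbol of the claimed order. The vector-valued nature of the problem is essentially inert, because $\sigma_1(\Lambda)$ is a scalar multiple of the identity on fibres and therefore commutes with every matrix; the genuinely non-commutative contributions, namely $h^2$ and $[\Lambda,h]$, are one order below the leading term and are absorbed into the recursion rather than obstructing it.
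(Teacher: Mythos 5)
Your proposal is correct and arrives at the same factorization, but it is organized differently from the paper's proof. The paper expands $W$ and $Z$ independently as sums of homogeneous terms $w_{-m}$, $z_{-m}$, substitutes into the composition formula, and obtains at each order a single linear relation
\[
(\omega - |\xi|_g) z_{-m-1} + (\omega + |\xi|_g) w_{-m-1} = (\text{known lower-order data});
\]
it then decouples this two-unknown relation by the evaluations $\omega = |\xi|_g$ (which kills the $z$ term and solves for $w_{-m-1}$) and $\omega = -|\xi|_g$ (which kills the $\omega$-dependence and yields $z_{-m-1} = -w_{-m-1}$). The identity $Z \equiv -(D_t \pm |D_x|_g) - h$ with the \emph{same} $h$ appearing in $W$ is thus a conclusion of the recursion. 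You instead posit this structure from the start: you build a square root $\Lambda = |D_x|_g$, fix the ansatz $W = \pm\Lambda + h$, $Z = -(D_t \pm\Lambda) - h$ with a single stationary unknown $h$, and reduce the problem to the quadratic operator equation $\pm(\Lambda h + h\Lambda) + h^2 \equiv F - R_0 I$, which you solve order by order by dividing by $2|\xi|_g$. Your version is a shortcut that trades the two-variable recursion for a one-variable one at the cost of having to verify (which you do) that the postulated ansatz is consistent; the paper's version discovers the relation $z=-w$ rather than assuming it. Both rely on exactly the same microlocal ellipticity of $|\xi|_g$ on the cone $\mathcal{V}$ to justify the symbol division, and both invoke a cutoff in $\omega/|\xi|$ at the end to produce genuine symbols on $\R^4$ without changing anything inside $\mathcal{V}$. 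One point worth making explicit, though you gesture at it: once the cutoff is applied, $h$ is no longer strictly $\omega$-independent, so the literal identity $[D_t,h]=0$ fails; what saves the argument (in both proofs) is that the cutoff and all its derivatives are trivial in the interior of $\mathcal{V}$, so the asymptotic composition formula evaluated at covectors in $\mathcal{V}$ sees only the uncut symbols, and the relation $P \equiv (D_t - W)Z$ holds microlocally there.
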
 
  %and $(y_0,s_0,\eta_0,\rho_0) \in T^* \R^4$. 
  %and $(y_0,s_0,\eta_0,\rho_0) \in T^* \R^4$. 
  Define $L^\pm \R^4 = \{ (x,t,\xi,\omega) \in T^*\R^4 : \omega= \pm |\xi|_g \}$, where $g$ is a Riemannian metric. 
Throughout this section we consider a non-zero covector $(x_0,t_0,\xi_0, \omega_0) $  in  $T^*\R^4$ with  $\xi_0 \neq 0$ and $\omega_0 \neq \mp |\xi_0|_g$ for a given sign $\pm \in \{+,-\}$  and  a fixed homogeneous canonical transformation %$\mathcal{H} : Dom ( \mathcal{H} ) \rightarrow T^* \R^4$ 
\begin{equation}\label{fixh}
(y,s,\eta,\rho)  \mapsto \mathcal{H}(y,s,\eta,\rho) \in T^*\R^4 , 
\end{equation}
defined on a conic neighbourhood of the covector $(y_0,s_0,\eta_0,\rho_0)$ with 
\[
\mathcal{H}^{-1}(x_0,t_0,\xi_0,\omega_0) =  (y_0,s_0,\eta_0,\rho_0). 
 \]
 We are only interested in $(x_0,t_0,\xi_0, \omega_0) \in L^\pm \R^4$ which shall be required. 
Moreover, we assume that the last coordinate of $\mathcal{H}^{-1}$ equals $\omega \mp |\xi|_g$, i.e, $ \mathcal{H}_* \rho (x,t,\xi,\omega) = \omega \mp  |\xi|_g$ (see Darboux' theorem \cite[Theorem 3.5.6]{Duistermaat}). 
% That is; the pull-back $\mathcal{H}^*$ transforms the last coordinate $\Omega$ into the principal symbol of $D_t -W$. %Locally the transformation is generated by a local generating function $\varphi (x,t,\xi,\omega)$ such that the graph $G_{\mathcal{H}}$ corresponds to the critical points of the phase function $\varphi(x,t,\xi, \omega) - (y,s) \cdot ( \xi , \omega)$ (cf. [Grigis-Sjöstrand,Ch. 9]). 
%The domain and the image of the function $\mathcal{H}$ are denoted by $Dom(\mathcal{H})$ ($= Im ( \mathcal{H}^{-1})$) and $Im(\mathcal{H})$ ($ = Dom( \mathcal{H}^{-1})$), respectively.  
% By setting $\mathcal{H} U :=\mathcal{H} (U \cap Dom\mathcal{H}  )$ we use the notation $\mathcal{H} U$ for any set $U \subset T^*\R^4$, not necessarily contained in the domain of $\mathcal{H}$. Similar convention is considered also for $\mathcal{H}^{-1}$. 
The graph of $\mathcal{H}$ is denoted by $G_{\mathcal{H}}$. The graph of $\mathcal{H}^{-1}$, denoted by $G_{\mathcal{H}}^{-1}$, is obtained by changing the order of coordinates in $G_{\mathcal{H}}$. 
%Following the Egorov's theorem [Sjöstrand-Grigis, Theorem 10.1],  

In the next step we follow Egorov's theorem \cite[Theorem 10.1]{Grigis-Sjostrand}. We fix Fourier integral operators $\mathcal{A} \in I^0_{cl}( \R^4 ,\R^4 ; G_{\mathcal{H}})$ 
 and $\mathcal{B} \in I^0_{cl}( \R^4 ,\R^4 ; G_{\mathcal{H}}^{-1})$ with  locally reciprocal  principal symbols (positively homogeneous of degree $0$) 
  %which correspond to the transformation $\mathcal{H}$ 
  such that 
 %\[
 %\mathcal{A} (x,t,y,s) = \int e^{i \varphi(x,t,\xi, \omega) - i (y,s) ( \xi , \omega)  } a(x,t,y,s, \xi, \omega) d\xi d\omega 
 %\]
%with $a(x,t,y,s, \xi, \omega)$ equal to $1$ on a conic neighbourhood of $(X_0,T_0,x_0,t_0,\Xi_0,\Omega_0)$.   
% By construction,  
$(x_0,t_0,\xi_0,\omega_0,y_0,s_0,\eta_0,\rho_0)$ is noncharacteristic for $\mathcal{A}$, 
$(y_0,s_0,\eta_0,\rho_0,x_0,t_0,\xi_0,\omega_0)$ is noncharacteristic for $\mathcal{B}$ and there is a conic neighbourhood $\mathcal{V}_1$ of $(y_0,s_0,\eta_0,\rho_0)$ and a conic neighbourhood $\mathcal{V}_2$ of $(x_0,t_0,\xi_0,\omega_0)$ such that 
 %$(x_0,t_0,\xi_0,\omega_0, X_0,T_0,\Xi_0,\Omega_0)$ is noncharacteristic for $\mathcal{B}$, 
 \begin{align}
 & WF(  \mathcal{B}\mathcal{A}  - id ) \cap \mathcal{V}_1 = \emptyset ,  \quad
WF(  \mathcal{A}\mathcal{B}  - id )  \cap \mathcal{V}_2 = \emptyset .
 \end{align}
 We may assume that the Schwartz kernels of $\mathcal{A}$ and $\mathcal{B}$ 
 %with respect to the coordinates $(x,t,y,s)$ and $(y,s,x,t)$ 
 are compactly supported distributions. 
 %and that the symbols of the kernels vanish outside in $Im(\mathcal{H}) \times Dom(\mathcal{H})$ and $Dom(\mathcal{H}) \times Im(\mathcal{H})$, 
 %respectively. 
%The principal symbols of $\mathcal{A}$ and $\mathcal{B}$ are reciprocal in the neighbourhoods. 
The principal symbol of $ \mathcal{B}  (D_t \mp |D_x|_g)  \mathcal{A}   $ equals the coordinate $\rho$ in a conic neighbourhood of $(y_0,s_0,\eta_0,\rho_0)$. Hence, microlocally near $(y_0,s_0,\eta_0,\rho_0)$, 
%$q = q(x,D_x) := \mathcal{B} I  h(x,D_x) \mathcal{A} I  $ we obtain
\[
 \mathcal{B}  (D_t  \mp W) \mathcal{A}  \equiv   \mathcal{B}  (D_t  \mp |D_x|_g  +h) \mathcal{A}  
 %=  \mathcal{B} I (D_t - |D_x|) \mathcal{A} I  +  \mathcal{B} I  h \mathcal{A} I  
 \equiv D_s  + q, 
 %\quad  q := \mathcal{B}   h  \mathcal{A}  ,
 \]
 modulo $\mps{-\infty}$, where $W$ is as in Lemma \ref{equdfkss} and $q \in \mpsc{0}$. 
 
\begin{definition}
Let $ S^m_{hom} (\R^4  \times \R^4 )$ stand for the symbols $a(x,t,\xi,\omega)$ in $S^m ( \R^4  \times \R^4  )$ that outside a bounded neighbourhood of the origin are positively homogeneous of degree $m$  in $(\xi,\omega)$. 
\end{definition}

The following lemma provides a way to get rid of the extra term $q$ 
(cf. \cite[Proposition 6.1.4]{FIO2}, \cite[Lemma 10.3]{Grigis-Sjostrand}). %See Appendix \ref{appe1} for the proof. 

\begin{lemma}\label{toidssneho}
Let $q   \in \mpsc{0}$. There exists $A  \in \mpsc{0}$ 
such that 
\[
( D_s   + q )  A  - A  D_s   \in \mps{-\infty}.
\]
Moreover, there is a neighbourhood $U$ of $(y_0,s_0)$ in $\R^4$ and $a_0 \in M_3( S^0_{hom} ( \R^4  \times \R^4 ) )$ that is invertible as a matrix at every point $(y,s,\eta,\rho) \in U \times \R^4$ and satisfies 
\[
 A  -  a_0(y,s,D_y,D_s)   \in  \mpsc{-1},
\]
%where 
%\[
%Op(a_0)^k_j (y,s;\tilde{y},\tilde{s} ) := \int_{\R^4} e^{i(y-\tilde{y}) \eta + i(s-\tilde{s})\rho  } (a_0)^k_j(y,s,\eta,\rho) d\eta d\rho. 
%\]
%i.e.
%\[
%A^k_j (y,s ; z,r)  - \int_{\R^4} e^{i(y-z)\cdot \eta + i(s-r) \rho} (a_0 )_j^k  (y,s,\eta,\rho) d\eta d\rho \in  \Psi^{-1}_{cl} (\R^4),
%\]
\end{lemma}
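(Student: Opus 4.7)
The plan is to seek $A$ as a pseudodifferential operator with classical matrix-valued symbol $a \sim a_0 + a_{-1} + a_{-2} + \cdots$, where each $a_{-j}$ is $M_3$-valued and positively homogeneous of degree $-j$ in $(\eta,\rho)$ outside a neighbourhood of the origin. The strategy is the standard one for eliminating a zeroth-order term by conjugation, as in \cite[Proposition 6.1.4]{FIO2} and \cite[Lemma 10.3]{Grigis-Sjostrand}, but carried out with matrix-valued amplitudes: the operator equation is reduced to a sequence of matrix transport ODEs in $s$ which are then solved one order at a time.

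First, using the standard composition formula for pseudodifferential operators, the full symbol of $(D_s+q)A - AD_s$ equals
\begin{equation*}
-i\partial_s a + \sum_{\alpha} \frac{1}{\alpha!}\, \partial^\alpha_{(\eta,\rho)} q \cdot D^\alpha_{(y,s)} a,
\end{equation*}
where the product of $q$ with $a$ is matrix multiplication in the $M_3$ factor (the $D_sA$ and $AD_s$ contributions cancel at every order since $\rho$ is independent of $(y,s)$). Setting this expansion equal to zero and collecting homogeneous components of degree $-j$ gives, at leading order, the matrix transport equation $\partial_s a_0 = -i q^{(0)} a_0$, and for each $j \geq 1$ an inhomogeneous linear equation of the form
\begin{equation*}
\partial_s a_{-j} = -i q^{(0)} a_{-j} + R_{-j}[a_0,\ldots,a_{-(j-1)}],
\end{equation*}
where $q^{(0)}$ is the principal symbol of $q$ and $R_{-j}$ is an $M_3$-valued symbol, positively homogeneous of degree $-j$, built from lower-order terms of $q$ and from derivatives of the previously determined amplitudes.

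Next I would solve these ODEs on a neighbourhood $U$ of $(y_0,s_0)$. The variables $(y,\eta,\rho)$ are treated as parameters and the ODE for $a_0$ is solved with initial condition $a_0(y,s_0,\eta,\rho) = I$. Linear matrix ODE theory produces a smooth solution, positively homogeneous of degree $0$ in $(\eta,\rho)$, which is pointwise invertible thanks to Liouville's formula
\begin{equation*}
\det a_0(y,s,\eta,\rho) = \exp\!\Bigl(-i\int_{s_0}^{s}\mathrm{tr}\, q^{(0)}(y,s',\eta,\rho)\,ds'\Bigr) \neq 0.
\end{equation*}
The inhomogeneous equations for the $a_{-j}$, $j\geq 1$, are solved with zero initial data at $s=s_0$ by variation of constants, again producing smooth amplitudes of the correct homogeneity. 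Borel summation (after a smooth cut-off near the origin in $(\eta,\rho)$) yields a classical matrix-valued symbol $a$ with the prescribed asymptotic expansion, and $A := a(y,s,D_y,D_s) \in \mpsc{0}$ satisfies $(D_s+q)A - AD_s \in \mps{-\infty}$ microlocally over $U$. The \emph{moreover} statement is then immediate, since $a - a_0 \in M_3(S^{-1}_{cl}(\R^4\times\R^4))$ and $a_0$ is invertible on $U \times (\R^4\setminus\{0\})$.

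The one subtlety relative to the scalar references is the non-commutativity of the $M_3$ factor: the leading ODE cannot be integrated by a scalar exponential and one must use the matrix propagator (ordered exponential). This has no effect on existence, smoothness, or homogeneity, and the invertibility of $a_0$ is preserved thanks to Liouville's formula. Beyond this point the iteration is structurally identical to the scalar case, so the only work is to check that matrix multiplication passes through the standard asymptotic book-keeping without changing orders or homogeneity degrees.
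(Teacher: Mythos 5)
Your proof is correct and follows essentially the same route as the paper: reduce $(D_s+q)A - AD_s \in \mps{-\infty}$ to a hierarchy of matrix transport ODEs in $s$ via the symbol calculus, solve the leading equation $\partial_s a_0 = -iq^{(0)}a_0$ with $a_0|_{s=s_0}=I$, and handle the lower-order corrections recursively by variation of constants. The one small improvement over the paper's write-up is your use of Liouville's formula $\det a_0 = \exp(-i\int_{s_0}^s \operatorname{tr} q^{(0)}\,ds') \neq 0$ to get invertibility of $a_0$ directly wherever the solution exists, whereas the paper argues invertibility only near $(y_0,s_0)$ by a continuity-plus-compactness argument on the unit sphere in $(\eta,\rho)$.
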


\begin{proof}[Proof of Lemma \ref{toidssneho}]
Write each entry $q^j_k$ of $q$ in the asymptotic form
\[
q^j_k(y,s,\eta,\rho) \sim   \sum_{m=0}^\infty q^k_{j,-m}(y,s,\eta,\rho), 
\] 
where $ q^k_{j,-m}(y,s,\eta,\rho)$ is positively homogeneous of degree $-m$ as a function of $\eta,\rho$. 
%Below we refer to positive homogeneity in  $\eta,\rho$ without mentioning the variables. 
We substitute the ansatz $A^k_j = a^k_j(y,s,D_y,D_s)$, 
\[
a^k_j (y,s,\eta,\rho) \sim \sum_{m=0}^\infty a^k_{j,-m}(y,s,\eta,\rho) , \quad j,k=1,2,3, %\quad a^k_{j,-m} \in S^{-m}
\] 
%($a^k_{j,-m} $ positively homogeneous of degree $-m$ in $\eta,\rho$)  
in 
\begin{equation}\label{ei2oa}
( D_s + q )  A   - A  D_s  \in M_3( \Psi^{-\infty} ( \R^4) ) ,
\end{equation}
apply the formula \eqref{gener}, 
and derive sufficient conditions by putting together the terms that are of the same degree. 
%order (e.g. the term $(\partial_\eta^\alpha q^l_{j,-v} )(D_y^\alpha a_{l,-r}^k)$ is of order $-v-r-|\alpha|$). 
The equation \eqref{ei2oa} can be written as 
\begin{equation}\label{dfijos}
[D_s  , A^j_k ]  + q^l_k A^j_l  \in \Psi^{-\infty} ( \R^4), \quad j,k=1,2,3.
\end{equation}
For every $m=1,2,3,\dots$, let $a_{-m}$ and $q_{-m}$ be the matrices with entries $a_{j,-m}^k$ and $q_{j,-m}^k$, respectively. 
Setting the leading terms in the symbol of \eqref{dfijos} equal zero implies
\[
-i \{  \rho , a_{j,0}^k \} + q^l_{j,0} a^k_{l,0}= 0,
\]
that is,
\[
 \partial_s a_{0}  = -i q_{0}   a_{0}  .
\]
Instead of trying to solve the equation explicitly we refer to the basic theory of ordinary differential equations for existence of solutions. Provided an initial value at $s=s_0$, both uniqueness and existence of $s \mapsto a_0(y,s,\eta,\rho)$ on a closed interval follows from \cite[Theorem 5.1]{MR0273082}. 
We fix the initial value $a_0(y,s_0,\eta,\rho) = I$. 
Positive homogeneity of $q_0$  in $\rho,\eta$ then imply that the solution really is positively homogeneous of degree $0$ in $\rho,\eta$. 
 %This can also be extended to a larger domain. 
 %using the argument in \cite[p. 33-34]{MR587488}. 
Moreover, regularity of the solution in $(y,s,\eta,\rho)$ is a consequence of  \cite[Theorem 10.5]{MR0273082}. %Fixing invertible matrices as initial values yields invertibility of the matrix $a_0$ in a neighbourhood of the set $\{ (y,s,\eta,\rho) \in T^*\R^4 : s= s_0\} $. 
%Positive homogeneity of $q_0$ and uniqueness of the solution imply that with the initial value $a_0(y,s_0,\eta,\rho) = I$ the solution $a_0$ is positively homogeneous of degree $0$ in $\rho,\eta$. 
%Due to homogeneity we may define $a_0$ on 
As the initial value is an invertible matrix and the solution is positively homogeneous of degree $0$, it follows by continuity that there is an open neighbourhood $U$ of $(y_0,s_0)$ in $\R^4$ such that $a_0(s,y,\eta,\rho)$ is invertible matrix for every $(y,s,\eta,\rho) \in T^*U \setminus \{0\}$. 
%This equation is solved by
%\[
% a_{0} (y,s,\eta,\rho)   = \exp \left(  -i \int_0^s  q_{0} (y,r,\eta,\rho) dr  \right) ,
%\]
%where $\exp$ stands for the matrix exponential map and integration of a matrix is considered independently with respect to each entry ($\int [M^k_j(r)] dr := [ \int M^k_j(r) dr ] $). 
%\[
%\exp : M_3(\mathbb{C})  \rightarrow M_3(\mathbb{C}), \quad \exp X := I + \sum_{k=1}^\infty X^k/k!. 
%\]
%Recall from [Hall, Lie Groups, Lie Algebras, and Representations
%An Elementary Introduction, Part I, Ch. 2.1, Proposition 2.16]) that the exponential map is smooth. Further, as $q_0$ is positively homogeneous of degree $0$, so is also $a_0$. 
For the rest of the terms one derives
\begin{equation}\label{asdq2w}
\partial_s a_{-m} = -i q_{0} a_{-m} -i  f_{-m}   , \quad m=1,2,3,\dots
\end{equation}
where $ f_{-m}  $ is a matrix depending on entries of $\partial_{\eta,\rho}^\alpha q_{-w} D_{y,s}^\alpha a_{-v} $, where $v=0,\dots,m-1$ and other indices satisfy $|\alpha| + m = v+w$. 
%\[
% f_{m,j}^k  :=  \sum_{( \alpha,h,s) \in I_m} ( \partial_\xi^\alpha q^l_{j,-h} ) ( D^\alpha a^k_{l,-s} ), \quad I_m := \{ ( \alpha,h,s) : |\alpha|\geq 1, \ h+s= m + |\alpha| \}. 
% \]
%Since  $f_m ( a_{0}, \dots, a_{1-m}) $ does not depend 
Each $a_{-m} $ can be solved recursively from these equations. Proceeding inductively, similar arguments as in the case $m=0$ apply for existence and regularity of solutions with smooth initial values. 
 It is also straightforward to check that positive homogeneity of degree $-m$ in $\eta,\rho$  is consistent with the solutions. 
%The step $m=0$ is already proven. Assume that  $a_{-m}$ is positively homogeneous of degree $-m$ for every $m=0,1,2,\dots,m_0$.  
%It suffices to show that $a_{-m_0-1}$ associated with a positively homogeneous initial value of degree $-m_0-1$ in $\eta,\rho$ is positively homogeneous of degree $-m_0-1$.
%As $f_{-m_0-1}$ consists of terms $\partial_{\eta,\rho}^\alpha q_{-v} D_{y,s}^\alpha a_{-w} $ where $w\leq m_0$ and $m_0 +1+|\alpha| =  w+v$, it is positively homogeneous of degree $-m_0-1$ by positive homogeneity of $q_{-s}$ and the induction assumption. Applying this to \eqref{asdq2w} for $m= m_0+1$ yields that $a_{-m_0-1}$ is also positively homogeneous of degree $-m_0-1$. 

\end{proof}

 %Let $A$ be as in Lemma \ref{toidssneho}. The principal part of $A  $ is $ \exp \left(  -i t  q_{0}  \right)$, and hence invertible as a matrix. This condition is a generalisation of ellipticity in the multivariate case. Following a similar argument as in the scalar case [Grigis-Sjöstrand, Theorem 4.1], one constructs a parametrix $B \in \mps{0}$, $AB - I \in \mps{-\infty}$,  $BA - I  \in \mps{-\infty}$. This is a special case of Lemma \ref{wiikki} below.  
 %In order to invert $Z$ from Lemma \ref{equdfkss} microlocally in a conic neighbourhood, 
 The following lemma is the construction of a microlocal parametrix. 
The proof (see Appendix \ref{appe1}) does not differ significantly from the scalar case (see \cite{FIO2}).
\begin{lemma}\label{wiikki}
For $L \in \mpsc{m}$, $m\in \R$, let $a_m \in M_3( S^{m}_{hom} ( \R^4 \times \R^4) )$ be the principal symbol of $L$ in the sense that 
\[
L - a_m (x,t,D_x,D_t) \in  \mpsc{m-1}. 
\]
 %\[
 %L_j^k(x,t;y,s) \equiv  \int_{\R^4} e^{i(x-y,t-s) \cdot ( \xi,\omega)}(a_m)^k_j(x,t, \xi, \omega) d\xi d\omega  \mod  \mpsc{m-1}, \quad j,k=1,2,3. 
 %\]
 %$a(x,t,\xi,\omega)-a_0 (x,t,\xi,\omega) \in M_3( S^{m-1} ( \R^4 \times \R^4) )$ 
 %
 %
% Assume that there exist $\epsilon,\delta>0$ such that $a_m(x,t,\xi,\omega)$ is an invertible matrix for every $(x,t,\xi,\omega)$ in the conic set 
% \begin{equation}\label{rorfe2d}
% \Sigma_{\epsilon,\delta}= \left\{ (x,t,\xi,\omega) \in \R^4 \times( \R^4 \setminus 0 ): |(x,t)-(x_0,t_0)|\leq \epsilon, \ \left| \frac {(\xi,\omega)}{ | (\xi,\omega)| } - \frac{ ( \xi_0, \omega_0) }{|( \xi_0,\omega_0)|} \right| \leq \delta \right\}  .
% \end{equation}
 %
 %
 Assume that $a_m(x,t,\xi,\omega)$ is an invertible matrix at some $ (x_0,t_0,\xi_0,\omega_0) \in T^*\R^4$. 
  Then, there is $R \in  \mpsc{-m}$ and a conic neighbourhood $\mathcal{V}$ of $(x_0,t_0,\xi_0,\omega_0) $ in $T^*\R^4$ satisfying the conditions (i) and (ii) below. 
\begin{enumerate}
 \item[(i)] $R$ is a (two sided) microlocal inverse of $L$ in $\mathcal{V}$:
\[ 
WF (L^l_jR^k_l - \delta^k_j)   \cap \mathcal{V}  = \emptyset, \quad \text{and} \quad 
WF  (R^l_jL^k_l  - \delta^k_j)   \cap \mathcal{V} = \emptyset.
\]
\item[(ii)] The principal symbols of $R$ and $L$ are reciprocal in $\mathcal{V}$: There is $b_{-m} \in M_3( S^{-m}_{hom} ( \R^4 \times \R^4  ))$ satisfying
\[
R -  b_{-m}(x,t,D_x,D_t) \in \mpsc{-m-1}
\]
and 
 %\[
 %R_j^k(x,t;y,s) \equiv  \int_{\R^4} e^{i(x-y,t-s) \cdot ( \xi,\omega)}(b_{-m})^k_j(x,t, \xi, \omega) d\xi d\omega  \mod  \mpsc{-m-1}, \quad j,k=1,2,3,
 %\]
% is such that 
 \[
 b_{-m}(x,t,\xi,\omega) = (a_m)^{-1} (x,t,\xi,\omega) , \quad \forall (x,t,\xi,\omega) \in \mathcal{V}. 
 \]
%the multivariate principal symbols of $A$ and $B$ are reciprocal in $\Sigma$ , that is, $b_0(x,t,\xi,\omega) = a_0^{-1}(x,t,\xi,\omega)$, for every  $(x,t,\xi,\omega) \in \Sigma$. 
% That is; $ab \equiv I \equiv ba$ microlocally in $\Sigma$. 
\end{enumerate}
%$(WF(A_j^ku_k) := WF(\sum_{k=1,2,3} A_j^ku_k) ) $
 \end{lemma}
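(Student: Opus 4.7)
The plan is to construct $R$ as a standard microlocal parametrix by a Neumann-type asymptotic summation, with the only new ingredient being care about non-commutativity of matrix multiplication.

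First I would use continuity of the determinant to enlarge the point of ellipticity to an open conic neighbourhood: since $a_m(x_0,t_0,\xi_0,\omega_0)$ is invertible as a $3\times 3$ matrix, there is a conic neighbourhood $\mathcal{V}_0 \subset T^*\R^4 \setminus \{0\}$ of $(x_0,t_0,\xi_0,\omega_0)$ on which $\det a_m$ is bounded away from zero. Choose a smaller conic neighbourhood $\mathcal{V} \Subset \mathcal{V}_0$ of $(x_0,t_0,\xi_0,\omega_0)$ and a cutoff $\chi \in S^0_{hom}(\R^4 \times \R^4)$ which equals $1$ on $\mathcal{V}$ and is supported in $\mathcal{V}_0$. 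Define the candidate principal symbol
\[
b_{-m}(x,t,\xi,\omega) := \chi(x,t,\xi,\omega)\,(a_m)^{-1}(x,t,\xi,\omega),
\]
extended by zero outside $\mathcal{V}_0$. Since matrix inversion is smooth on invertible matrices and $a_m$ is positively homogeneous of degree $m$, the inverse is homogeneous of degree $-m$, so $b_{-m} \in M_3(S^{-m}_{hom}(\R^4 \times \R^4))$.

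Next I would set $R_0 := b_{-m}(x,t,D_x,D_t) \in \mpsc{-m}$ and compute, entry by entry using the scalar composition formula (applied with the Leibniz rule summing over the matrix index $l$), the compositions $R_0 L$ and $LR_0$. By construction the principal symbols satisfy $b_{-m}\,a_m \equiv I$ and $a_m\,b_{-m} \equiv I$ modulo $M_3(S^{-1}_{hom})$ on $\mathcal{V}$, so there exist $E, E' \in \mpsc{-1}$ with
\[
R_0 L \equiv I - E, \qquad L R_0 \equiv I - E' \qquad \text{microlocally in } \mathcal{V}.
\]
I would then form asymptotic Neumann series $R_\ell \sim R_0 \sum_{k\geq 0} E^k$ and $R_r \sim \sum_{k\geq 0} (E')^k R_0$. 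Using the standard asymptotic summation of symbols (which applies verbatim to each matrix entry), one obtains $R_\ell, R_r \in \mpsc{-m}$ with
\[
R_\ell L \equiv I, \qquad L R_r \equiv I, \qquad \text{microlocally in } \mathcal{V},
\]
the congruences being modulo $M_3(\Psi^{-\infty})$. The non-commutativity of matrices only forces us to keep left and right parametrices distinct at this stage.

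To finish, I would verify that $R_\ell \equiv R_r$ microlocally in $\mathcal{V}$ by the classical two-sided inverse trick: microlocally in $\mathcal{V}$,
\[
R_\ell \equiv R_\ell (L R_r) = (R_\ell L) R_r \equiv R_r,
\]
since the compositions on either side differ from the one-sided identities only microlocally outside $\mathcal{V}$, and associativity of composition of properly supported pseudo-differential operators holds in $M_3(\Psi^{\bullet})$. Setting $R := R_\ell$ (equivalently $R_r$) gives an element of $\mpsc{-m}$ satisfying (i). Property (ii) is immediate from the construction, since the principal symbol of $R_0$ agrees with $(a_m)^{-1}$ on $\mathcal{V}$ and the Neumann corrections contribute only in $M_3(S^{-m-1}_{hom})$.

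The main obstacle I would expect is purely bookkeeping: making sure that at each step of the Neumann iteration I compose in the correct matrix order (left corrections for the left parametrix, right for the right), and that the cutoff $\chi$ is chosen so that the support conditions propagate consistently through the compositions so that the microlocal identities hold precisely on $\mathcal{V}$ rather than on an ever-shrinking sequence of neighbourhoods. Once one agrees to shrink $\mathcal{V}$ finitely many times (for composition of properly supported operators), the non-commutativity causes no extra trouble because the symbolic equations $b_{-m} a_m = a_m b_{-m} = I$ hold as genuine matrix identities on the neighbourhood.
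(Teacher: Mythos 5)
Your approach matches the paper's: cut off $(a_m)^{-1}$ to a conic neighbourhood to define the principal symbol $b_{-m}$, form one-sided parametrices by an asymptotic Neumann series, and then identify them microlocally by the standard two-sided inverse argument $R_\ell \equiv R_\ell(LR_r) = (R_\ell L)R_r \equiv R_r$.

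There is one concrete slip, and it is precisely in the place you yourself flag as the bookkeeping hazard. With your conventions $R_0 L \equiv I - E$ and $L R_0 \equiv I - E'$, the correct left and right parametrices are
\[
R_\ell \sim \Bigl(\sum_{k\geq 0} E^k\Bigr) R_0, \qquad R_r \sim R_0 \Bigl(\sum_{k\geq 0} (E')^k\Bigr),
\]
so that $R_\ell L \equiv (\sum E^k)(I-E) \equiv I$ and $L R_r \equiv (I-E')(\sum (E')^k) \equiv I$ telescope. You wrote the factors on the opposite sides, $R_\ell \sim R_0\sum E^k$ and $R_r \sim \sum (E')^k R_0$; with the matrix-valued symbols these do not simplify, since e.g.\ $L R_0 \sum E^k = (I-E')\sum E^k$ mixes $E$ with $E'$ and does not telescope. (As a check, note $R_0 E' = E R_0$, so $(\sum E^k)R_0 = R_0 \sum (E')^k$: the two corrected formulas above agree, consistent with $R_\ell \equiv R_r$.) Once the orders are swapped the argument is exactly the paper's, with the paper's $f,g$ playing the roles of your $E',E$ and $b_f,b_g$ being the symbols of your $R_r,R_\ell$.
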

 Applying microlocality\footnote{$WF(Au) \subset WF(u)$ for $ A\in \Psi^m(\R^n)$} to Lemma \ref{wiikki} yields the following:
 \begin{corollary}\label{kosda}
Let $L$, $R$ and $\mathcal{V}$ be as in Lemma \ref{wiikki}. Then, 
\[
\mathcal{V} \cap \bigcup_{j=1,2,3} WF(L_j^ku_k) =\mathcal{V} \cap \bigcup_{j=1,2,3} WF(u_j) = \mathcal{V} \cap \bigcup_{j=1,2,3} WF(R_j^ku_k).
\]
for $u_k \in \mathcal{E}'(\R^4)$, $k=1,2,3$.
 \end{corollary}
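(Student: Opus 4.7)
The plan is to derive both equalities directly from Lemma \ref{wiikki}(i) together with the standard pseudo-local property of pseudo-differential operators: if $P \in \Psi^m(\R^4)$ and the operator wave front set $WF(P)$ is disjoint from an open conic set $\mathcal{V}$, then $WF(Pu) \cap \mathcal{V} = \emptyset$ for every $u \in \mathcal{E}'(\R^4)$; in particular the unqualified statement $WF(Pu) \subset WF(u)$ always holds. The two equalities in the corollary are symmetric in the roles of $L$ and $R$, so I will establish only the first one, $\mathcal{V} \cap \bigcup_{j} WF(L_j^k u_k) = \mathcal{V} \cap \bigcup_{j} WF(u_j)$; the second follows by repeating the argument with the right-inverse relation $WF(L_j^l R_l^k - \delta_j^k) \cap \mathcal{V} = \emptyset$ playing the role of the left-inverse relation.

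For the inclusion $\subset$, I invoke pseudo-locality entrywise: $WF(L_j^k u_k) \subset WF(u_k)$ for every pair $(j,k)$, and subadditivity of the wave front set on the finite sum $\sum_k L_j^k u_k$ gives $WF(L_j^l u_l) \subset \bigcup_k WF(u_k)$ (Einstein convention on the left, set-theoretic union on the right). Taking a union over the free index $j$ and intersecting with $\mathcal{V}$ yields $\mathcal{V} \cap \bigcup_j WF(L_j^k u_k) \subset \mathcal{V} \cap \bigcup_k WF(u_k)$.

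For the reverse inclusion, I use the microlocal left inverse supplied by Lemma \ref{wiikki}(i). Setting $E_j^k := R_j^l L_l^k - \delta_j^k$, the hypothesis gives $WF(E_j^k) \cap \mathcal{V} = \emptyset$, so by the strengthened pseudo-locality recalled above, $WF(E_j^k u_k) \cap \mathcal{V} = \emptyset$. Thus $u_j$ and $R_j^l(L_l^k u_k)$ have the same wave front set over $\mathcal{V}$, and a second application of ordinary pseudo-locality to each $R_j^l$ yields
\[
WF(u_j) \cap \mathcal{V} \subset WF\bigl(R_j^l (L_l^k u_k)\bigr) \cap \mathcal{V} \subset \bigcup_l WF(L_l^k u_k).
\]
Taking a union over $j$ completes the proof.

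Nothing in this argument is delicate, but the one non-mechanical step is the translation of Lemma \ref{wiikki}(i) into the statement that $E_j^k u_k$ is microlocally smooth in $\mathcal{V}$, which requires the operator-wave-front-set version of pseudo-locality rather than the naive inclusion $WF(Pu) \subset WF(u)$. The main obstacle, if any, is simply keeping the Einstein-summed indices in expressions such as $L_j^k u_k$ distinct from the set-theoretic unions $\bigcup_j$ over free indices, so that the two uses of pseudo-locality (one to compare $u_j$ with $R_j^l L_l^k u_k$, one to push $R_j^l$ through the sum) are applied in the correct order.
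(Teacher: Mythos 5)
Your proof is correct and is simply a fleshed-out version of the paper's one-line argument (``applying microlocality to Lemma \ref{wiikki} yields the following''): pseudo-locality for one inclusion, the microlocal inverse from Lemma \ref{wiikki}(i) plus the operator-wave-front-set refinement of pseudo-locality for the other. The bookkeeping with $E_j^k := R_j^l L_l^k - \delta_j^k$ and the subadditivity of $WF$ over the Einstein sums is handled correctly.
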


 % Let $H$ be as $b$ in Lemma \ref{wiikki} for $a=Z$ and some $\Sigma$ that contains $char( D_t - |D_x|)=   \{ (x,t,\xi,\omega) \in T^*\R^4 : \omega = | \xi | \}$.  
 Let $A$ be as in Lemma \ref{toidssneho} and let $B$ be the corresponding microlocal inverse of it, as in Lemma \ref{wiikki}. 
Define $\mathcal{X} := \mathcal{A}  A$ and $\mathcal{Y} := B \mathcal{B} $. 
Applying the standard FIO calculus to $A^k_j,B^k_j \in \Psi_{cl}^0( \R^4) $, $\mathcal{A} \in I^0_{cl} ( \R^4,\R^4; G_{\mathcal{H}} )$, and $\mathcal{B} \in I^0_{cl} ( \R^4,\R^4; G_{\mathcal{H}}^{-1} )$ yields
%(see the discussion in \cite[\textsection9-10]{Grigis-Sjostrand}) that 
%(see [Hörmander, FIOs],  [Duistermaat FIOs, Theorem 4.2.2], etc.)
 %that 
\begin{align}
\mathcal{X}^k_j &= \mathcal{A}  A^k_j  \in I^{0+0}_{cl} ( \R^4 , \R^4; G_{\mathcal{H}} \circ  \text{diag}(T^*\R^4 \setminus \{0 \})  ) =  I^{0}_{cl} ( \R^4 , \R^4; G_{\mathcal{H}} ), \\
\mathcal{Y}^k_j &=   B^k_j \mathcal{B}  \in I^{0+0}_{cl} ( \R^4 , \R^4; \text{diag}(T^*\R^4 \setminus \{0\}) \circ G_{\mathcal{H}}^{-1} ) =  I^{0}_{cl} ( \R^4 , \R^4; G_{\mathcal{H}}^{-1} ),
\end{align}
together with the following identities for  $(y,s, \eta, \rho) $ in the domain of $\mathcal{H}$:
\begin{equation}\label{symbocalcu}
\begin{split}
 \sigma (\mathcal{X}^k_j ) (\mathcal{H}(y,s, \eta, \rho) , y,s, \eta, \rho) &= \sigma( \mathcal{A}  ) ( \mathcal{H}(y,s, \eta, \rho), y,s, \eta, \rho)  \sigma(A^k_j ) ( \text{diag} (y,s, \eta, \rho )), \\ 
  \sigma (\mathcal{Y}^k_j ) ( y,s, \eta, \rho, \mathcal{H}(y,s, \eta, \rho))&=  \sigma(B^k_j ) ( \text{diag}\mathcal{H}(y,s, \eta, \rho))  \sigma( \mathcal{B}  ) ( y,s, \eta, \rho, \mathcal{H}(y,s, \eta, \rho)).
  \end{split}
\end{equation}
Here $\sigma= \sigma_0 \in S^0 /S^{-1}$ refers to the principal symbol. 
%Since the principal symbols of $\mathcal{A}   $ and $ \mathcal{B}  $ in these formulas are nonzero, it follows from Lemma \ref{toidssneho} and Lemma \ref{wiikki} that $ [  \sigma (\mathcal{X}^k_j )  ]_{k,j=1,2,3}$ and $ [\sigma (\mathcal{Y}^k_j ) ]_{k,j=1,2,3}$ are represented by invertible matrices in sufficiently small conic neighbourhoods of $(x_0,t_0,\xi_0,\omega_0,y_0,s_0,\eta_0,\rho_0)$ and $(y_0,s_0,\eta_0,\rho_0,x_0,t_0,\xi_0,\omega_0)$, respectively. 
%Moreover, the principal symbols can be taken to be positively homogeneous of degree $0$ in the neighbourhoods. 
As $\mathcal{X}\mathcal{Y} \equiv I $ and $\mathcal{Y}\mathcal{X} \equiv I $ near $(x_0,t_0,\xi_0,\omega_0)$ and $(y_0,s_0,\eta_0,\rho_0)$, respectively, there are conic neighbourhoods $\mathcal{V}$ and $ \mathcal{W} = \mathcal{H}^{-1} \mathcal{V}$ of $(x_0,t_0,\xi_0,\omega_0)$ and $(y_0,s_0,\eta_0,\rho_0)$, respectively, such that  
\begin{align}
&\mathcal{W} \cap   \bigcup_{j=1,2,3 } \mathcal{H}^{-1}  WF(f_j )    =  \mathcal{W} \cap   \bigcup_{j=1,2,3 } WF( \mathcal{Y}^k_j f_k ),  \label{meniscus_medialis}  \\ 
& \mathcal{V} \cap  \bigcup_{j=1,2,3 } \mathcal{H}  WF( f_j )  =  \mathcal{V} \cap  \bigcup_{j=1,2,3 } WF( \mathcal{X}^k_j f_k ). \label{meniscus_lateralis}
\end{align} 
for any $f_j\in \mathcal{D}' ( \R^4) $, $j=1,2,3$. 
See derivation of \eqref{meniscus_medialis} in Appendix \ref{appe1}. The other equation is computed similarly. 

\begin{definition}\label{deffa}
For $(x,t,\xi) \in \R^4 \times \R^3$ we define a smooth curve 
%\[
%r\mapsto \Sigma^\pm (r) = (X(r),T(r),\Xi(r), \Omega (r)) \in L^\pm \R^4 
%\]
%by
%\begin{equation}
%\begin{split}
\[
\Sigma = \Sigma_\pm : \R \rightarrow T^*\R^4, \quad r\mapsto \Sigma (r) = (X(r),T(r),\Xi(r), \Omega (r)) \in L^\pm \R^4 , 
%\\ 
% \end{split}
% \end{equation} 
\]
 by 
 \begin{equation}\label{optomet}
X (r) = \gamma_{x,v}\left( \frac{r}{|\xi |_g} \right)  , \quad T (r) =t + r, \quad \Xi (r) =  \mp \flat_g \dot\gamma_{x,v}\left( \frac{r}{|\xi|_g} \right)    , \quad \Omega (r) = \pm |\xi|_g , 
%\text{such that} \quad 
%&\text{where} \quad  v := -\sharp_g \xi = -g^{jk} \xi_k \partial_j.
% \quad   (x,t,\xi, |\xi |_g ) \in N^* K_k \cap Im( \mathcal{H}) .
% \end{split}
\end{equation}
where $v :=  \mp \sharp_g \xi =  \mp g^{jk} \xi_k \partial_j$. 
\end{definition}

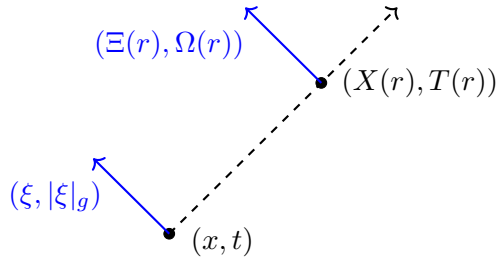
\begin{figure}[h]
\begin{tikzpicture}
\filldraw  (0,0) circle (2pt);
\filldraw  (2,2) circle (2pt);
\node[black] at (0.7,-0.1) {$(x,t)$};
%\node[red] at (1.5,0.5) {$(v,|v|_g)$};
\node[blue] at (-1.5,0.5) {$(\xi,|\xi|_g)$};
\node[black] at (3.3,2) {$( X(r), T(r) )$};
\node[blue] at (0,2.5) {$(\Xi(r), \Omega(r) )$};
%\filldraw  (2,2) circle (2pt);
\draw[->, dashed, thick](0,0) -- (3,3);
%\draw[->,red,thick](0,0) -- (1,1);
\draw[->,thick,blue](2,2) -- (1,3);
\draw[->,thick,blue](0,0) -- (-1,1);
\end{tikzpicture}
\caption{The curve $(X,T,\Xi,\Omega)$ translates the initial covector $(x,t,\xi , |\xi|_g)$ along the geodesic $(X,T)$ which is represented by the dashed line in the drawing.
%associated with the initial vector $(v,|v|_g)$, $v:= -\sharp_g \xi$. 
The covector $(\Xi,\Omega)$ is normal to the geodesic $(X,T)$. }
\end{figure}

The lemmas below follow from standard geometric observations. For proofs, see Appendix \ref{appe1}.
\begin{lemma}\label{ds89askqeiorw}
The curve \eqref{optomet} is the unique bicharacteristic for the operator $D_t \mp |D_x|_g \in \Psi^1_{cl} ( \R^4)$ (i.e. an integral curve of the Hamilton vector field $H_{p^\pm}$ for $p^\pm := \omega \mp |\xi|_g$ in $\text{char}(p^\pm)= L^\pm \R^4$) with $(x,t,\xi,  \pm |\xi|_g)$ as the initial value at $r=0$. 
\end{lemma}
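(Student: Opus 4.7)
The plan is to compute the Hamilton vector field of $p^\pm(x,t,\xi,\omega) := \omega \mp |\xi|_g$ explicitly, write down the associated Hamilton-Jacobi system, and then verify that the curve $\Sigma_\pm$ given in Definition \ref{deffa} solves it by reducing the spatial components to the geodesic equations. Using the canonical symplectic form $\sigma = d\xi_j \wedge dx^j + d\omega \wedge dt$ on $T^*\R^4$ and the identity $|\xi|_g^2 = g^{kl}(x)\xi_k \xi_l$, one computes
\[
H_{p^\pm} = \mp \frac{g^{jk}\xi_k}{|\xi|_g}\frac{\partial}{\partial x^j} + \frac{\partial}{\partial t} \pm \frac{1}{2|\xi|_g} (\partial_j g^{kl})\xi_k \xi_l \frac{\partial}{\partial \xi_j},
\]
so an integral curve $(x(r),t(r),\xi(r),\omega(r))$ with initial value $(x,t,\xi,\pm|\xi|_g)$ must satisfy $\dot{t}=1$, $\dot{\omega}=0$, $\dot{x}^j = \mp g^{jk}\xi_k/|\xi|_g$, and $\dot{\xi}_j = \pm \frac{1}{2|\xi|_g}(\partial_j g^{kl})\xi_k\xi_l$.

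The $t$- and $\omega$-components are immediate from $T(r)=t+r$ and $\Omega(r)=\pm|\xi|_g$. For the spatial components I would invoke the standard fact that geodesics $\gamma = \gamma_{x,v}$ correspond, via the musical isomorphism, to integral curves of the Hamilton vector field of $\tfrac{1}{2}g^{kl}\xi_k\xi_l$, i.e.\ $\dot\gamma^j(\tau) = g^{jk}(\flat_g \dot\gamma)_k(\tau)$ and $\frac{d}{d\tau}(\flat_g\dot\gamma)_j = -\tfrac{1}{2}(\partial_j g^{kl})(\flat_g\dot\gamma)_k(\flat_g\dot\gamma)_l$. Setting $v = \mp \sharp_g \xi$, so $|v|_g = |\xi|_g$, and substituting $\tau = r/|\xi|_g$ into these geodesic equations then yields exactly the Hamilton equations above for $X(r)$ and $\Xi(r) = \mp\flat_g\dot\gamma_{x,v}(r/|\xi|_g)$: the factor $1/|\xi|_g$ in the reparametrization accounts for the fact that $|D_x|_g$ is the square root, rather than one half the square, of the geodesic Hamiltonian, so the geodesic is traversed at $g$-unit speed. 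The initial value check is direct: $X(0)=x$, $\Xi(0) = \mp \flat_g(\mp \sharp_g \xi) = \xi$, $T(0)=t$, $\Omega(0)=\pm|\xi|_g$. Since $|\Xi(r)|_g = |\dot\gamma_{x,v}(r/|\xi|_g)|_g = |v|_g = |\xi|_g$ is preserved by the geodesic flow, one also gets $p^\pm(\Sigma(r)) = \pm|\xi|_g \mp |\Xi(r)|_g = 0$, so $\Sigma \subset L^\pm\R^4$.

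Uniqueness is a consequence of the standard Picard--Lindelöf theorem applied to the Hamilton equations above, which have a $C^\infty$ right-hand side on the open cone $\{\xi \ne 0\} \subset T^*\R^4$; since $\xi_0 \ne 0$ is preserved along $\Sigma$ (indeed $|\Xi(r)|_g = |\xi|_g > 0$), the integral curve with the prescribed initial data is unique. No serious obstacle is anticipated; the only bookkeeping care needed is to keep straight the two signs $\pm$ and $\mp$ and to remember that applying $\flat_g$ twice (once in defining $v$ and once in defining $\Xi$) restores the initial covector $\xi$, and that conservation of $|\Xi|_g$ along geodesics makes the Hamilton equation for $\xi_j$ consistent with the constant factor $1/|\xi|_g$ appearing in the reparametrization.
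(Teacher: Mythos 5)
Your proposal is correct and follows essentially the same route as the paper: compute $H_{p^\pm}$ explicitly, write the four Hamilton equations, and verify that the curve of Definition \ref{deffa} solves them, the only nontrivial check being the $\xi$-component. The cosmetic difference is that the paper verifies this component by directly differentiating $\flat_g\dot\gamma_{x,v}(r/|v|_g)$ and substituting the geodesic equation in Christoffel-symbol form, whereas you cite the standard cotangent-bundle Hamiltonian characterization of geodesics (Hamiltonian $\tfrac12 g^{kl}\xi_k\xi_l$) and reparametrize by $r/|\xi|_g$ using conservation of $|\Xi|_g$; both amount to the same computation.
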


\begin{lemma}\label{lahjboelihjs125}
Let $(y,s,\eta) \in \R^4 \times \R^3$ and $r_0>0$ such that the segment $\{ (y,s + r,\eta,0) \in T^* \R^4 : r\in (-r_0,r_0) \}$  lies in the domain of the canonical transormation $\mathcal{H}$. Then, the curve
\[
 r \mapsto \mathcal{H} (y,s+r,\eta,0) , \quad  r\in (-r_0,r_0)
\]
equals  $r \mapsto \Sigma(r)$, $r \in (-r_0,r_0)$ with $(x,t,\xi,\pm |\xi|_g) = \mathcal{H} (y,s,\eta,0) $ as the initial value. That is; the transformation $\mathcal{H}$ takes a bicharacteristic for $D_s \in \Psi^1_{cl} ( \R^4)$ into a bicharacteristic for $D_t \mp  |D_x|_g  \in \Psi^1_{cl} ( \R^4)$. 
\end{lemma}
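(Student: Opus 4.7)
The plan is to identify both curves as integral curves of the same Hamiltonian vector field through the same initial point, and then invoke Lemma \ref{ds89askqeiorw} together with uniqueness. All of the work will be geometric and sign-bookkeeping; nothing computational should arise.

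\medskip

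First I would note that in the $(y,s,\eta,\rho)$ coordinates, the Hamilton vector field of the coordinate function $\rho$ is simply $H_\rho = \partial_s$, so the segment $r \mapsto (y, s+r, \eta, 0)$ is precisely the integral curve of $H_\rho$ issuing from $(y,s,\eta,0)$, and it lies in the characteristic hypersurface $\{\rho = 0\}$. By construction, the homogeneous canonical transformation $\mathcal{H}$ satisfies $\mathcal{H}_*\rho = \rho\circ\mathcal{H}^{-1} = \omega \mp |\xi|_g$. In particular the point $(x,t,\xi,\omega):= \mathcal{H}(y,s,\eta,0)$ satisfies $\omega \mp |\xi|_g = 0$, i.e.\ $\omega = \pm |\xi|_g$, so it lies on $L^\pm\R^4$ and matches the initial condition of $\Sigma$ given in Definition \ref{deffa}.

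\medskip

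Next I would invoke the standard symplectic geometric identity that a symplectic diffeomorphism $\phi$ intertwines Hamilton vector fields with the pushforward of the generating function:
\[
\phi_* H_f = H_{f\circ \phi^{-1}}.
\]
Applying this with $\phi = \mathcal{H}$ and $f = \rho$ yields
\[
\mathcal{H}_* H_\rho \;=\; H_{\rho \circ \mathcal{H}^{-1}} \;=\; H_{\omega \mp |\xi|_g} \;=\; H_{p^\pm}.
\]
Consequently the curve $r \mapsto \mathcal{H}(y, s+r, \eta, 0)$, being the image under $\mathcal{H}$ of an integral curve of $H_\rho$, is an integral curve of $H_{p^\pm}$ for $r \in (-r_0, r_0)$, and it starts at $(x,t,\xi,\pm|\xi|_g)$.

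\medskip

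Finally, by Lemma \ref{ds89askqeiorw} the curve $r \mapsto \Sigma(r)$ is the unique integral curve of $H_{p^\pm}$ lying in $\text{char}(p^\pm) = L^\pm\R^4$ with the same initial value $(x,t,\xi,\pm|\xi|_g)$ at $r = 0$. Uniqueness of solutions to the Hamiltonian ODE then forces $\mathcal{H}(y,s+r,\eta,0) = \Sigma(r)$ on $(-r_0,r_0)$, which is exactly what we want. The only conceptual care needed is the bookkeeping around $\mathcal{H}_*\rho$ versus $\rho \circ \mathcal{H}^{-1}$, so that the standard pushforward identity for Hamilton vector fields applies directly; there is no real analytic obstacle.
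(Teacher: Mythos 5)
Your proof is correct and follows essentially the same route as the paper: both arguments reduce the claim, via Lemma \ref{ds89askqeiorw}, to showing that $r\mapsto\mathcal{H}(y,s+r,\eta,0)$ is an integral curve of $H_{p^\pm}$ with the correct initial value. The only difference is cosmetic: you cite the standard pushforward identity $\mathcal{H}_*H_\rho=H_{\rho\circ\mathcal{H}^{-1}}$, whereas the paper re-derives exactly that identity in place via the Poisson bracket computation $\{p^\pm,f\}\circ\mathcal{H}=\{p^\pm\circ\mathcal{H},f\circ\mathcal{H}\}=\{\rho,f\circ\mathcal{H}\}$.
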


\begin{definition}
We define 
\begin{equation}\label{jksks20kasfdbg}
\Lambda_{f,\pm} := \Big\{ \big(   \Sigma(r)  \ ; \ x,t,\xi, \pm | \xi|_g \big) :   r \in [0,\infty), \ (x,t,\xi) \in \R^4 \times \R^3 \Big\} ,
\end{equation}
where $ \Sigma(r) = \Sigma_\pm (r)$ is the curve \eqref{optomet} with the initial value $\Sigma(0) =  (x,t,\xi, \pm | \xi|_g) \in L^\pm  \R^4$. 
This is the forward propagating part of the characteristic flow-out canonical relation associated with $D_t \mp |D_x|_g$. There is also the backwards propagating part $\Lambda_{b,\pm}$, defined as 
\[
\Lambda_{b,\pm} := \Big\{ \big(   \Sigma(r)  \ ; \ x,t,\xi, \pm | \xi|_g \big) :   r \in (-\infty,0], \ (x,t,\xi) \in \R^4 \times \R^3 \Big\}. 
\]
The complete flow-out canonical relation associated with $D_t \mp |D_x|_g$ is the union 
\[
\Lambda_\pm = \Lambda_{f,\pm} \cup \Lambda_{b,\pm} . 
\]
\end{definition}

\begin{definition}
Let $\pi_{\R^3} : T^*\R^4 \rightarrow \R^3$ be the projection $\pi_{\R^3}(x,t , \xi, \omega ) : = x$.  For $(z,\theta,\beta) \in \R^3 \times \S^2 \times (0,1)$ we define 
\[
\begin{split}
\mathcal{B} K  (z,\theta,\beta)   := \{ x \in \R^3 :   |\theta|_g (x ) = \beta^{-1} , \ x =z+  r \theta, \ r \in \R \}  .
%\\ 
%K(z,a,\theta,\beta) := \{ (x,t) \in \R^4 : x-z = (t-a) \beta \theta \}. 
\end{split}
\]
%that is;
%\[
%\mathcal{B} K  (z,\theta,\beta) = \pi_{\R^3} (  \flat_g T K(z,\theta,\beta) \cap L^+ \R^4). 
%\]
%for every $a \in \R$.
\end{definition}
The set $\mathcal{B} K  (z,\theta,\beta)$ consists of all the spatial points where a particle that moves through the point $z$ at a constant velocity $\beta \theta$ breaks the ``light-barrier'', that is, moves exactly at the speed of waves in the medium. 
The following proposition is the main tool in this paper.

\begin{proposition}\label{pinheiro}
Fix $z \in \R^3$, $\beta \in (0,1) $, and $\theta \in \S^2$. Let $g$ be a Riemannian metric  in $\R^3$   with $|\cdot|_g\geq 1$ on $\S^2$ 
%$| \tilde\theta |_g < 1$,   $\forall \tilde\theta \in \S^2$ 
%such that $\{ (x, \tilde\theta ) \in \R^3 \times \S^2 :  | \tilde\theta |_g (x) \geq   \frac{1}{\beta} \}$ is compact 
and assume that  
\begin{align}\label{asjd9w1sadadsrertew}
%& \{ x:  | \theta |_g (x) \geq  \frac{1}{\beta} \} , 
%\\
& g( \nabla_\theta \theta, \theta) (x) \neq 0, \quad \forall x   \in \mathcal{B} K  (z,\theta,\beta), 
 \end{align} 
 $($i.e. $\nabla_\theta |\theta|_g  (x) \neq 0$,  $\forall x   \in \mathcal{B} K  (z,\theta,\beta))$. 
% 
 %for every $x   \in \mathcal{B} K  (z,a,\theta,\beta)$ we have $g( \nabla_\theta \theta, \theta) (x) \neq 0$.  
 Let 
\[
S_j \in I^m_{cl}( \R^4;N^*K(z,\theta, \beta) ),  \quad j=1,2,3,
\]
be compactly supported 
and $P \in \mpsc{2}$ be of the form
\[
P   -  (  \partial_t^2 - g^{jk} \partial_j \partial_k  )I -  F  \in \mps{-\infty}, 
\]
for some stationary operator $F =[  F^k_j(x,D_x) ]_{j,k=1,2,3} $ of order $1$. 
%$($identification $M_3( \Psi^1_{cl}(\R^3) ) \subset \mpsc{1})$.
%be such that
%\[
%\bigcup_{j=12,3} WF(S_j) = N^* K( z,a,\theta,\ibeta).
%\] 
%Let $U$ be an open neighbourhood of $(x,t)$  in $\R^4$. 
Consider distributions 
$u_j  \in \mathcal{D}'(\R^4)$, $j=1,2,3$, that for large $T>0 $ obey
\begin{align}
&P^k_j u_k - S_j  \in C^\infty ( \R^4),  \label{luosysteemi1}\\
&u_j|_{ t \leq -T} \in  C^\infty ( \R^3 \times (-\infty, -T]  ).   
%& B_T :=  \{ (x,t) \in \R^4 : t> -T, \ \sqrt{x \cdot x}  > 2T + t  \} . 
\end{align}
Then, 
\[
\bigcup_{j=1,2,3} WF(u_j) \setminus \bigg( \bigcup_{k=1,2,3} WF(S_k)   \bigg)  \subset \bigcup_{j=1,2,3} \Lambda_f \circ WF(S_j) , \quad \Lambda_f:= \Lambda_{f,+} \cup \Lambda_{f,-} . 
\]
Moreover, if the initial value $(x,t,\xi, \pm |\xi|_g) = \Sigma(0) \in L^\pm \R^4$ is the first intersection between a bicharacteristic $\Sigma(r)$ of the form \eqref{optomet} and $ \bigcup_{j=1,2,3}  WF(S_j)$, and the positively homogeneous principal symbol (of degree $m-1/2$) for at least one of the components $S_{j}$, $j=1,2,3$ does not vanish at $(x,t,\xi, \pm |\xi|_g)$ (i.e. the 3-vector of these principal symbols is nonzero), then the open segment between the first and the second intersection lies in the wave front set. That is;
\[
\Sigma(r) \in  \bigcup_{j=1,2,3} WF(u_j) 
 \]
 for 
 \[
0 < r < \sup \Big\{  s>0 : \Sigma(s) \notin  \bigcup_{k=1,2,3} WF(S_k) \Big\}.
 %\quad r_1 := \sup \{  r>0 :  (X(r),T(r),\Xi(r)\Omega(r) ) \notin N^*K \}.
\]
 %$r_1 := \sup \{  r>0 :  (X(r),T(r),\Xi(r)\Omega(r) ) \notin N^*K \} \subset (0,\infty] $. 
\end{proposition}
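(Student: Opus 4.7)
The plan is to localize microlocally at an arbitrary covector $q_0 = (x_0,t_0,\xi_0,\omega_0) \in T^*\R^4 \setminus 0$ and reduce the problem to the normal-form operator $D_s$, for which forward wave-front propagation is explicit. If $q_0 \notin L\R^4 = L^+\R^4 \cup L^-\R^4$, the principal symbol matrix of $P$ at $q_0$ equals $(\omega_0^2 - |\xi_0|_g^2) I$, which is invertible; Lemma~\ref{wiikki} supplies a microlocal parametrix and Corollary~\ref{kosda} gives $q_0 \in \bigcup_j WF(u_j)$ iff $q_0 \in \bigcup_j WF(S_j)$. So any wave front of $u$ outside $\bigcup_j WF(S_j)$ must lie on $L^\pm\R^4$, and it suffices to fix a sign $\pm$ and a point $q_0 \in L^\pm\R^4$. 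There, Lemma~\ref{equdfkss} gives $P \equiv (D_t - W_\pm)Z_\pm$ modulo smoothing, and the principal symbol of $Z_\pm$ at $q_0$ is $\mp 2|\xi_0|_g I \neq 0$; Lemma~\ref{wiikki} then provides a microlocal inverse $R_\pm$ of $Z_\pm$. Setting $v := Z_\pm u$, Corollary~\ref{kosda} yields $\bigcup_j WF(v_j) = \bigcup_j WF(u_j)$ near $q_0$, while $(D_t - W_\pm) v \equiv S$ microlocally.

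I would then apply the conjugation constructed before Lemma~\ref{toidssneho}: the FIOs $\mathcal{X} = \mathcal{A} A$ and $\mathcal{Y} = B\mathcal{B}$ reduce $D_t - W_\pm$ to $D_s$ modulo smoothing near $(y_0,s_0,\eta_0,0) = \mathcal{H}^{-1}(q_0)$. Setting $w := \mathcal{Y} v$ and $\tilde S := \mathcal{Y} S$ gives $D_s w \equiv \tilde S$, and the past-smoothness $u|_{t \le -T} \in C^\infty$, together with compact support of $S$ and properness of the FIOs, yields smoothness of $w$ for $s$ sufficiently negative. Forward integration in $s$ then produces $WF(w) \subset WF(\tilde S) \cup \Lambda_f^{D_s}\circ WF(\tilde S)$, where $\Lambda_f^{D_s}$ is the forward $s$-translation relation. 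Transferring back through \eqref{meniscus_medialis}--\eqref{meniscus_lateralis}, Lemma~\ref{lahjboelihjs125} identifies $\mathcal{H}$-images of $s$-translates with bicharacteristics of $D_t \mp |D_x|_g$, which by Lemma~\ref{ds89askqeiorw} are precisely the curves $\Sigma_\pm$ of Definition~\ref{deffa}. This delivers the first assertion.

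For the non-triviality claim, I would invoke the pair-Lagrangian calculus recalled in Section~\ref{dfsoerew}. The hypothesis \eqref{asjd9w1sadadsrertew} is precisely what forces the intersection $N^*K(z,\theta,\beta) \cap L^\pm\R^4$ to be clean wherever it occurs, so $\tilde S$ is a classical Lagrangian distribution along the transformed conormal manifold $\mathcal{H}^{-1}(N^*K \cap L^\pm\R^4)$, with principal symbol obtained from that of $S$ via \eqref{symboli123} and \eqref{symbocalcu} by multiplication by the principal symbols of $B$, $\mathcal{B}$, and the invertible matrix $a_0$ of Lemma~\ref{toidssneho}; each factor is elliptic, so non-vanishing of the vector-valued principal symbol of $S$ from \eqref{meq3} transfers to non-vanishing of that of $\tilde S$ at the first intersection. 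Solving $D_s w \equiv \tilde S$ produces, by \cite[Proposition~2.1]{Greenleaf-Uhlmann}, a distribution in $I(\tilde\Lambda_0,\tilde\Lambda_1)$ whose principal symbol on $\tilde\Lambda_1 \setminus \tilde\Lambda_0$ is the $s$-transport of the symbol of $\tilde S$ from $\tilde\Lambda_0$; for $D_s$ this transport is just constancy in $s$, so the vector-valued symbol remains non-vanishing along the open segment between the first and second intersection. Pulling back through $\mathcal{X}$ and $R_\pm$, both elliptic at the relevant points, the principal symbol of $u$ stays non-vanishing on that open segment, placing it in $\bigcup_j WF(u_j)$.

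The most delicate step will be tracking the vector-valued principal symbol through the conjugation by $\mathcal{X}, \mathcal{Y}$, the matrix inversion of $Z_\pm$, and the Melrose--Uhlmann transport, verifying that the non-vanishing assumption \eqref{meq3} is preserved at each stage. A secondary verification is that \eqref{asjd9w1sadadsrertew} indeed yields cleanness of $N^*K \cap L^\pm\R^4$ at the locus $\mathcal{B}K(z,\theta,\beta)$ where the worldline becomes characteristic for $P$ — without this the pair-Lagrangian calculus would not apply and the symbolic transport argument would break down.
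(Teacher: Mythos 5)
Your proposal is correct and takes essentially the same route as the paper: split off the elliptic (non-characteristic) region via Lemma~\ref{wiikki}, factor $P$ near $L^\pm\R^4$ via Lemma~\ref{equdfkss}, invert $Z_\pm$ microlocally, conjugate to $D_s$ with $\mathcal{X},\mathcal{Y}$, use the past-smoothness condition as initial data, solve the transport equation with the Greenleaf--Uhlmann pair-Lagrangian calculus, and track the vector-valued principal symbol back through the (matrix-)elliptic conjugating factors. Two small corrections worth noting: the hypothesis \eqref{asjd9w1sadadsrertew} is used (via Lemma~\ref{rosl}) to give \emph{transversality} of $N^*K$ to $L^\pm\R^4$ together with discreteness of the bicharacteristic intersections with $N^*K$ — which is exactly what \cite[Proposition~2.1]{Greenleaf-Uhlmann} requires — not merely cleanness of the intersection; and $\tilde S$ is a Lagrangian distribution along $\mathcal{H}^{-1}(N^*K)$ (locally, inside the microlocal cut-off), not along $\mathcal{H}^{-1}(N^*K\cap L^\pm\R^4)$; the intersection locus only determines where the source is characteristic and hence where singularities may branch off.
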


For proving the proposition we need the following lemma: 

\begin{lemma}\label{rosl}
Assume that the Riemannian metric $g$ on $\R^3$ with $|\cdot |_g\geq 1$ satisfies 
\[
g( \nabla_\theta \theta ,\theta)(x)   \neq 0 , \quad \forall x  \in \mathcal{B}K(z,\theta,\beta). 
\] 
%\[
%g( \nabla_\theta \theta ,\theta)  \neq 0 
%\]
Then the bundle $L^\pm \R^4 $ is transversal to $  N^*K(z,\theta,\beta)$ and the intersection of $N^*K(z,\theta,\beta)$ with a curve $\Sigma(r)$ of the form \eqref{optomet} is a discrete set. 
\end{lemma}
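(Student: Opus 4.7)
The plan is to treat the two assertions of the lemma separately. For transversality of $L^\pm\R^4$ and $N^*K(z,\theta,\beta)$, I will use that $L^\pm\R^4$ is a smooth hypersurface in $T^*\R^4\setminus\{0\}$ cut out by $\omega = \pm|\xi|_g(x)$, so transversality at an intersection point $p$ reduces to showing $T_pN^*K \not\subset T_pL^\pm$. I would parameterise $N^*K$ as $\{(z+t\beta\theta, t, \xi, -\beta\theta\cdot\xi)\}$ and read off its tangent space, which is spanned by $\beta\theta^k\partial_{x^k}+\partial_t$ together with $\partial_{\xi_j}-\beta\theta^j\partial_\omega$, $j=1,2,3$. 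The containment $T_pN^*K\subset T_pL^\pm$ then splits into two conditions: (i) from the $\delta\xi$ variations, which force $\xi_i = \mp\beta|\xi|_g\, g_{il}\theta^l$ and, after taking $g$-norms, $\beta|\theta|_g(x) = 1$, i.e.\ $x\in\mathcal{B}K$; (ii) from the $\delta t$ variation, which simplifies to $(\partial_kg^{ij})\xi_i\xi_j\theta^k = 0$. After substituting the form of $\xi$ from (i) and using $\partial_k(g^{ij}g_{jl})=0$, this second condition reduces to $g(\nabla_\theta\theta,\theta)(x)=0$, which the hypothesis excludes on $\mathcal{B}K$. Hence transversality holds at every intersection point.

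For discreteness of $\Sigma\cap N^*K$, I would observe that along $\Sigma(r) = (X(r), T(r), \Xi(r), \Omega(r))$ the membership $\Sigma(r)\in N^*K$ amounts to the vanishing of
\[
F(r) := X(r) - z - (t+r)\beta\theta \in \R^3,
\]
the covector condition $\Omega(r) = -\beta\theta\cdot\Xi(r)$ being controlled by the same algebra as in the transversality argument and contributing no further independent constraint. Suppose that the zero set of $F$ accumulates at some $r_0$; since $F\in C^\infty(\R;\R^3)$, iterated Taylor expansions at $r_0$ force $F(r_0) = F'(r_0) = F''(r_0) = 0$. From $F'(r_0)=0$, together with $X'(r)=\dot\gamma(r/|\xi|_g)/|\xi|_g$, I would read $\dot\gamma(s_0) = \beta|\xi|_g\theta$ with $s_0=r_0/|\xi|_g$. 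Since $|\dot\gamma|_g$ is constant along the geodesic, this gives $|\theta|_g(X(r_0)) = 1/\beta$, placing $X(r_0)\in\mathcal{B}K$. Finally, $F''(r_0)=0$ means $\ddot\gamma(s_0)=0$; combined with the geodesic equation $\ddot\gamma^j = -\Gamma^j_{kl}(\gamma)\dot\gamma^k\dot\gamma^l$ and the form of $\dot\gamma$, this yields $\Gamma^j_{kl}(X(r_0))\theta^k\theta^l = 0$ for every $j$, i.e.\ $\nabla_\theta\theta(X(r_0))=0$. This contradicts the hypothesis $g(\nabla_\theta\theta,\theta)(x)\ne 0$ on $\mathcal{B}K$, so the zero set of $F$ must be discrete.

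The main obstacle I anticipate is the clean identification in step~(ii) of the $\delta t$-obstruction with $g(\nabla_\theta\theta,\theta)$. One must rewrite $(\partial_kg^{ij})\xi_i\xi_j\theta^k$ using $\partial_kg^{ij} = -g^{im}g^{jn}\partial_kg_{mn}$ together with the specific form $\xi_i = \mp\beta|\xi|_g\, g_{il}\theta^l$ from step~(i), and then recognise the resulting totally symmetric sum $\theta^k\theta^l\theta^m\partial_kg_{lm}$ as $2\,g(\nabla_\theta\theta,\theta)$ via the formula for Christoffel symbols. Once this algebraic identification is established, both assertions follow by essentially routine Taylor expansion and the geodesic equation.
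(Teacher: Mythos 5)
Your proof is correct and takes essentially the same route as the paper. The transversality reduction to $T_pN^*K \not\subset T_pL^\pm$ and the split into $\delta\xi$- and $\delta t$-conditions is precisely the paper's argument that its linear functional $L$ is nonzero, and your iterated-Taylor discreteness argument (forcing $\nabla_\theta\theta=0$ at an accumulation point via the geodesic equation) is the same idea as the paper's observation that the geodesic ``bends'' at a tangential intersection, stated a little more directly.
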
 
\begin{proof}[Proof of Lemma \ref{rosl}]
Denote $K := K(z,\theta,\beta)$. 
 We begin by proving the first claim. 
 As 
%As the characteristic set is of codimension 1, it is sufficient to find for arbitrary $v \in N^*K $ in the domain of $\mathcal{H}^{-1}$ a vector $ V \in T_v N^*K$ such that $D_{v} \mathcal{H}^{-1} V$ does not lie in the characteristic set. 
\[
\begin{split}
%&\text{dim} ( T_w  \mathcal{H}^{-1} N^* K  + T_w \mathcal{H}^{-1} L^\pm \R^4 )  =
&\text{dim} (   T_v   N^* K  + T_v  L^\pm \R^4 ) 
\\
&=  \text{dim}( N^* K )+  \text{dim} ( L^\pm \R^4 ) -  \text{dim} ( T_v N^* K  \cap  T_v L^\pm \R^4 ) \\ 
&= 4 + 7-  \text{dim} ( T_v N^* K  \cap  T_v L^\pm \R^4 )  \\
&= \text{dim} (T_v T^* \R^4 ) + 3-  \text{dim} ( T_v N^* K  \cap  T_v L^\pm \R^4 ) ,   \quad v= \mathcal{H} w \\ 
%&= 4+  7 -  \text{dim} \Big\{  (  \beta ( \delta t)     \theta , \delta t , \delta \xi , - \beta \theta \cdot  \delta \xi ) \in T_v ( T^*\R^4 )   :  L \cdot  ( \delta \xi , \delta t) = 0 \Big\}  \\
%&= 4+7-3 = 8 = \text{dim}( T^*\R^4) , \quad 
%
%L := \beta \theta + \frac{ \sharp_g \xi  }{|\xi|_g(x)}, \quad v= \mathcal{H}^{-1} w
\end{split}
\] 
we need to show that $ \text{dim} ( T_v N^* K  \cap  T_v L^\pm \R^4 )  \leq 3$.
Recall that  
\[
N^*K = \{ (z+ t\beta \theta, t \ ; \  \xi ,  - \beta \xi \cdot \theta) : t\in \R, \ \xi \in \R^3 \setminus \{0\} \} . 
\]
Thus, for $v\in N^*K$ we have the expression
 %the 2-dimensional plane $Z:= \{  (0,0,\delta \xi , 0) : \delta\xi \cdot \theta = 0 \}$ is tangent to $N^*K$ at every point. 
 \[
T_v N^*K = \{ \big( ( \delta t) \beta \theta, \delta t  ,   \delta \xi ,  - \beta ( \delta \xi ) \cdot \theta \big) : \delta t\in \R, \ \delta \xi \in \R^3 \} . 
 \]
As $L^\pm\R^4 = \{ (x,t,\xi,\pm |\xi|_g ) : (x,t,\xi) \in \R^4 \times (\R^3\setminus \{0\}) \}$ we have for $v = (x,t,\xi, \pm |\xi|_g) \in L^\pm \R^4$ that
\[
T_v L^\pm\R^4 = \Big\{  ( \delta x , \delta t, \delta \xi ,   \delta \omega) : \delta \omega = \pm \frac{g(\xi,\delta \xi) +    \frac{1}{2}  \langle d_x g ( \xi,\xi)   , \delta x \rangle }{| \xi |_g(x)} \Big\}.
%\quad v= (x,t,\xi, |\xi|_g). 
\] 
Consequently, for $v= ( x(t)  ,t,\xi, \omega(t,\xi) ) \in N^*K \cap L^\pm \R^4$ ($x(t) =z+ t \beta\theta$, $\omega (t,\xi) = - \beta \xi \cdot \theta=  \pm |\xi|_g(z+  t \beta \theta) $)   we have 
\[
\text{dim} ( T_v N^*K  \cap T_v L^+\R^4)    =   \text{dim} ( \ker L ) .
\] 
where  $ L = L_{z,\xi,t,\beta,\theta} : \R^4 \mapsto \R$ is a linear map defined by
\[
L :  ( \delta \xi, \delta t)  \mapsto \pm   \frac{g(\xi,\delta \xi)+     \frac{\beta}{2}   \langle d_x g( \xi,\xi) , \theta  \rangle \delta t }{| \xi |_g } \Big|_{x=z+ t\beta \theta } +\beta ( \delta \xi ) \cdot \theta .
\]
To prove that the dimension of the kernel is less or equal to $3$ we need to show that the map $L$ is not identically zero. By setting $L |_{\R^3 \times \{0\}}= 0$ one obtains 
\begin{equation}\label{apo}
 \pm \frac{  \xi     }{| \xi |_g    }    = -\beta \flat_g \theta  \quad \text{at} \quad x=z+  t\beta \theta
 \end{equation}
  which is possible only if  $| \theta |_g  = \frac{1}{\beta}$ at $x= z+  t\beta \theta$. 
  Let us  study $L$ at such points. 
   By substituting \eqref{apo} in $L (0  , \beta^{-3} )$ implies 
   \[
    L (0, \beta^{-3}) = \pm   \frac{1}{2}   \langle d_x g(  \theta ,  \theta ) , \theta  \rangle|_{x=z+ t\beta \theta }  =   \pm g( \nabla_\theta \theta, \theta)|_{x=z+ t\beta \theta } 
    \] 
    which is nonzero by  assumptions. In conclusion, the dimension of $\text{ker}L$ is at most $3$ which finishes the first part of the proof. 
%\[
% \frac{ \xi^\sharp    }{| \xi |_g}  = -\beta \theta    
%\]

Let us now show that the intersection of $N^*K$ and the curve $\Sigma(r) = (X(r),T(r),\Xi(r),\Omega(r))$ of the form \eqref{optomet} is a collection of discrete points. To prove this we set $\Sigma(0) \in N^*K$ and show that $(X(r),T(r))$ is not in $K$ for $r\neq 0$ in a small neighbourhood of $0$. By definition, $X(r) = \gamma_{v}(r/ |\xi|_g)$, $v= \mp \sharp_g \xi$. 
If $( X(r) ,T(r) ) $ is not tangent to $K$ at $r=0$, then the claim clearly holds. Thus, we may assume that $(\dot{X}(0),\dot{T}(0))  \in TK$ which implies the following approximation near $r=0$: 
\[
X(r) =X(0) + \beta  \theta  r - h(r),
\]
 where $h(r) = O(r^2)$. In addition, $(\dot{X}(0),\dot{T}(0))  \in L^\pm \R^4$ so at $X(0)$ we must have
 $
1 = | \dot{T} (0) | =   | \dot{X} (0) |_g  =  \beta | \theta|_g,
% \quad \text{at} \quad x= X(0) 
  $ 
  that is, $ | \theta|_g= \frac{1}{\beta}$. In particular, $g(  \nabla_{ \theta}     \theta, \theta) |_{x=X(0) } \neq 0$ by assumptions.  As $r \mapsto X(r)$ is a geodesic, it satisfies $ \nabla_{ r } \dot{X}  = 0$, where $\nabla_r  V =   \dot{V}^l  \partial_l  + \Gamma^l_{jk} V^j V^k \partial_l $ is the covariant derivative of a vector field along a curve (see e.g. \cite{MR1468735}). On the other hand, 
  \[
  \begin{split}
  g(   \nabla_r \dot{X} , \dot{X} ) |_{r=0} =  \beta^2 g(  \nabla_{ \theta}     \theta , \theta)  |_{x=X(0) }  -  \beta  g(  \nabla_r  \dot{h} ,   \theta)  |_{r= 0} \\
  %=  \beta^2 g(  \nabla_{ \theta}     \theta , \theta)  |_{x=X(0) }  -  \beta g(   \nabla_{ \theta}  \dot{h} , \theta)  |_{r= 0}
  \end{split} 
    \]
    Thus, $  g(  \nabla_r  \dot{h} ,   \theta)  |_{r= 0} =  \beta g(  \nabla_{ \theta}     \theta , \theta)  |_{x=X(0) }  \neq 0$ which is possible only if  $ h(r) =  \mu r^2 + O(r^3)$ around $r=0$  for some nonzero vector $ \mu $ (i.e. the curve $X(r)$ ``bends'' at $r=0$).  In conclusion, for $r\neq 0$ near the origin we have that $(X(r),T(r)) $ does not lie in the line $K$.
\end{proof}

\begin{proof}[Proof of Proposition \ref{pinheiro}]
 We relax the notation by omitting the parameters $(z,\theta,\beta)$ whenever there is no danger of confusion. 
 The proof follows the standard scheme of reducing the original vector-valued system microlocally along the characteristic flow into independent scalar transport equations. 
 Solving such equations is a simple task and requires only extending the fundamental theorem of calculus. %In fact, the result we refer to applies to much more general setting.  
% Alternatively way to do this would be to di
 
%Moreover, we denote  $u = (u_1,u_2,u_3)$, 
%where $u_j = u_j( \ \cdot \ ; z,a,\theta,\ibeta)$, 
%and $S= (S_1,S_2,S_3)$, $\mathcal{X} = [ \mathcal{X}^k_j]_{j,k=1,2,3}$, etc. 
%The idea of the proof is the following: 
For a fixed sign $\pm \in \{+,-\}$ and a vector $(x_0,t_0,\xi_0,\omega_0) \in  T^*\R^4   $, $\omega_0 \neq \mp |\xi_0|_g$ we express the wave  $u = u_j dx^j$ microlocally in a suitable conic neighbourhood $\mathcal{V}= \mathcal{V} _{x_0,t_0,\xi_0,\omega_0}$ of $(x_0,t_0,\xi_0,\omega_0)$ as a sum 
\begin{equation}\label{slqwegg}
u \equiv u_\pm + u_0   \mod C^\infty , %\quad u_\pm= u_{\pm, \mathcal{V}} , \quad u_0 = u_{0, \mathcal{V}},
\end{equation}
of a term $u_\pm= u_{\pm, \mathcal{V}}$ which creates (or annihilates) singularities that propagate forwards in time along the characteristic flowout from the source, and a residual term $u_0 = u_{0, \mathcal{V}}$ with a flow-invariant wave front set. That is; 
%That is;  
\[
  %WF_{\mathcal{V} }(u_{\pm})  \setminus WF_{\mathcal{V} }(S) \subset  
u_{\pm,j} \in I_{cl}^r ( \R^4;  \Lambda_{f,\pm} \circ   N^*K   ), \quad j=1,2,3, \quad \text{microlocally away from }N^*K, 
  % \quad WF_{\mathcal{V} }(f) :=\bigcup_{j=1,2,3} WF(f_j) \cap \mathcal{V} 
\]
which can be chosen such that $u_{\pm} \equiv 0$ if $(x_0,t_0,\xi_0,\omega_0) \notin \bigcup_{j=1,2,3} WF(S_j)$, 
%with principal symbol $\sigma_r ( u_{\pm,j} ) = M_j^k \sigma_m ( S_{k} ) $ for $M$ that is non-degenerate
%\[
%\sigma_r ( u_{\pm,j} ) = M_j^k \sigma_m ( S_{k} ) 
%\]
 and 
 \[
  \Lambda_{\pm}  \circ WF_{\mathcal{V} }(u_{0}) = WF_{\mathcal{V} }(u_{0})  
  \]
  where
  \[
  WF_{\mathcal{V} }(f) :=\bigcup_{j=1,2,3} WF(f_j) \cap \mathcal{V} , 
  \]
  all in a suitable conic neighbourhood $\mathcal{V} $ of $(x_0,t_0,\xi_0,\omega_0)$.
%
%Let us briefly suppose such micolocal expression for $u$ exists.
The residual term just moves existing singularities along bicharacteristics \eqref{optomet} in the sense that the wave front set $\bigcup_{j=1,2,3} WF(u_{0,j})$ in one region $\mathcal{V}' \subset \mathcal{V}$ defines it in the flowout $ \Lambda_\pm \circ \mathcal{V}' $ within $\mathcal{V}$. 
The wave front set of $u$ away from $ L^\mp \R^4 $ is computed by solving the microlocal expressions $u=u_\pm + u_0$ along curves of the form \eqref{optomet}. 
Let us briefly explain this before constructing the terms. First of all, early points in curves \eqref{optomet} have not yet hit the set $ \R^3 \times [-T,\infty) $  so 
regularity of the wave $u$ outside $\R^3 \times  (-T,\infty)$ implies that no singularities along the curve occur in $u$ until intersection with $\bigcup_{j=1,2,3}  WF(S_j)$ the first time in $\R^3 \times  (-T,\infty)$.   
Notice that the condition $| \theta |_g \geq 1$ on $\S^2$  ensures that each bicharacteristic really goes through the Cauchy surface $t= -T$. 
We must therefore have $u\equiv u_\pm $ at the first intersection of the curve and $\bigcup_{j=1,2,3}  WF(S_j)$. Hence, a forward propagating singularity is created at the point if for some $j$ we can show that the positively homogeneous principal symbol of $u_{\pm,j}$ does not vanish there. 
On the other hand, in a suitable conic neighbourhood around any point outside $\bigcup_{j=1,2,3}  WF(S_j)$ the expression $u=u_\pm + u_0$  leads to smooth $u_\pm$ and hence flow-invariant $\bigcup_{j=1,2,3}  WF(u_j)$.  
Thus, in a convenient sequence of conical neighbourhoods that cover a given bicharacteristic segment \eqref{optomet} emanating from the source each created singularity gets transported arbitrarily far until it possibly gets annihilated at the source. 

%One can also find the expressions $u= u_\pm + u_0$ 
%We begin by proving the first claim. 
%(jotain alustusta tahan) 
%By [Hörmander-Duistermaat, lisää lause] the singularities of $u_j$ outside $N^*K(z,a,\theta,\ibeta)$ are invariant in the bicharacteristic flow. 
%Thus, it is sufficient to show that the claim holds in some conic neighbourhood of $(x_0,t_0,\xi_0,\omega_0)$. 

Let us now derive the microlocal expression above. 
By Lemma \ref{wiikki} it suffices to study $u$ near $(x_0,t_0,\xi_0,\omega_0) $, $\omega_0= \pm |\xi_0|_g$.  
%By lemma $\ref{wiikki}$ it suffices to study $\omega_0 = |
As before, we let $\mathcal{H}$ be the homogeneous local transformation  \eqref{fixh} on a conic neighbourhood of the point with $ \omega \mp |\xi|_g$ being the last coordinate. 
%Consequently, $\omega_0 = | \xi_0 |_g$ and $\rho_0= 0$ for $(y_0,s_0,\eta_0,\rho_0) := \mathcal{H}^{-1} (x_0,t_0,\xi_0,\omega_0)$. 
%As in the scalar case (see e.g. \cite{FIO2}, \cite{Melrose-Uhlmann}), the the system can be microlocally reduced into a transport equation:    
%, where $S_j  = S_j( \ \cdot \ , z,a,\theta, \ibeta)$. 
%The claim is trivial for $(x_0,t_0)$ outside $\bigcup_{j=1,2,3} \text{singsupp}(K_j)$. Assume $(x_0,t_0,\xi_0,\omega_0) \in  \bigcup_{j=1,2,3} WF(S_j)$, $\omega_0 = | \xi_0|_g$.
Applying  Lemma \ref{equdfkss} and Lemma \ref{toidssneho} we derive  
\[
\begin{split}
  \mathcal{Y}  S  =  \mathcal{Y}  P u  \equiv \mathcal{Y}  (D_t I  - W) Z u  \equiv \mathcal{Y} (D_t I  - W)\mathcal{X}\mathcal{Y}  Z  u \equiv  B \mathcal{B}  (D_tI  - W) \mathcal{A} A \mathcal{Y}  Z u  \\
  \equiv  B (D_tI + q )A \mathcal{Y} Z u  \equiv  BA D_s \mathcal{Y} Z u  \equiv  D_s \mathcal{Y}Z u  \mod C^{\infty}   
 \end{split} 
\]
microlocally near $(y_0,s_0,\eta_0,\rho_0)$. 
%which holds for every parameters $(z,a,\theta,\beta) \in \R^4 \times \S^2 \times (\beta_0,1)$.
%for every $(z,\theta,\beta) \in \R^3 \times \S^2 \times (0,1)$. 
Thus, there are $\delta,\epsilon>0$ such that in the open cylinder $\cyl := B(y_0,\delta ) \times (s_0- \epsilon,s_0 + \epsilon)$ we have modulo $C^\infty( \cyl )$ the local transport equations 
%is a conic neighbourhood $\Sigma$ of $(y_0,s_0,\eta_0,0 )$ in $T^*\R^4 $ such that 
\begin{equation}\label{dif1}
D_s \tilde{u}_j (y,s)  \equiv \tilde{S}_j  (y,s) ,  \quad  j=1,2,3, \quad  (y,s) \in  \cyl , 
\end{equation}
%(here we identify $ \mathcal{D}' (Y) \subset \mathcal{D}' (X)$ for open $X \subset Y$)
for 
\begin{align}
\tilde{u}_j :=&\varrho  \mathcal{Y}^k_jZ^l_k u_l \in \mathcal{E}'(\R^4),%\bigg|_{B(y_0,\epsilon_0) \times (- \epsilon_1,\epsilon_1)} ,
 \\
 \tilde{S}_j := & \varrho \mathcal{Y}^k_j  S_k \in I^{m}_{cl} (  \mathcal{H}^{-1} N^* K  ; \R^4 ),
  \\
 \varrho:=&  \chi\left( \bigg| \frac{(D_y,D_s)}{|(D_y,D_s)|}- \frac{(\eta_0,\rho_0)}{|(\eta_0,\rho_0)|} \bigg| \right) \in \Psi^0_{cl} ( \R^4), 
 % \quad  (\equiv 1 \ \text{near} \ (y_0,s_0,\eta_0,\rho_0) ),
 %  \in \mathcal{E}'(\R^4).  
 % \bigg|_{B(y_0,\epsilon_0) \times (- \epsilon_1,\epsilon_1)} .
\end{align}
where $\chi \in C_c^\infty ( \R)$ is a smooth cut-off function that vanishes outside $(-\delta,\delta)$ and equals 1 in $(-\delta/2,\delta/2)$. 
%, and hence $\varrho \equiv 1$ near $(y_0,s_0,\eta_0,\rho_0)$. 
Microlocally near $(y_0,s_0,\eta_0,\rho_0)$  we have that $\tilde{u} \equiv \mathcal{Y} Z u$ and $\tilde{S} =  \mathcal{Y} S$. 
In view of (\ref{meniscus_medialis}-\ref{meniscus_lateralis}), Corollary \ref{kosda} (applied to $Z$) and Lemma \ref{lahjboelihjs125} it suffices to study singularities of each $\tilde{u}_j$ along bicharacteristics of $D_s$. 
The original wave $u$ is derived microlocally near $(x_0,t_0,\xi_0,\omega_0)$ by applying the inverse  of $\mathcal{Y} Z$ to $\tilde{u}$. 
Formally, 
\begin{equation}\label{asidj11}
u \equiv R \mathcal{X}  \tilde{u} \equiv  R \mathcal{X} E_f  \tilde{S}   + R \mathcal{X} v \mod C^\infty
\end{equation}
where $R$ is the microlocal inverse of $Z$ near $(x_0,t_0,\xi_0,\omega_0)$, $E_f$ stands for the forward fundamental solution $E_f (s,y ; \tilde{s}, \tilde{y}) =  i H(s- \tilde{s} ) \delta_0 ( y- \tilde{y}   ) $  (operating separately for each coordinate: $(E_f u )_j := E_f  u_j$), and $v \in \text{ker}(D_s)$ is a residual term with wave-front invariant in the characteristic flow of $D_s$:
\begin{equation}\label{239dskm}
\exists \Sigma_j \subset T^*B(y_0,\delta) \  : \  WF(v_j) =\{ (y,s,\eta,0): (y,\eta) \in \Sigma_j \}, \quad j=1,2,3,
\end{equation} 
(see \cite[Theorem 6.1.1]{FIO2}).\footnote{Choosing a different fundamental solution, say the backwards propagating solution $E_b (s,y ; \tilde{s}, \tilde{y}) =  -i H( \tilde{s}-s ) \delta_0 ( y- \tilde{y}   ) $ leads to a similar expression. 
In general, the difference $E_1  \tilde{S} - E_2  \tilde{S} $ for two solution operators $E_1,E_2$ solves $D_s u = 0$, and therefore is invariant along the bicharacteristics. }
The creation term is $u_\pm :=  R \mathcal{X} E_f \tilde{S} $, whereas the residual term is given by $u_0 := R \mathcal{X} v$. 
%\[
%
%\]
%In our case the propagator $E_+$ is the most useful one. 
 %for some $\Sigma_j \subset T^*B(y_0,\delta)$).
%
%
%Let us study the compositions $u_\pm = R \mathcal{X} E_f  \tilde{S}$ and $u_0  =R \mathcal{X} v$. 
For proving that $E_f  \tilde{S}$, and hence the expression \eqref{asidj11} is well defined and compatible with \cite[Proposition 2.1]{Greenleaf-Uhlmann} it suffices to show that the characteristic set 
$\{ (y,s,\eta,0) : (y,s) \in \cyl \} $
%= T^*\cyl \cap  \mathcal{H}^{-1} L^+\R^4  $ 
of $D_s  :  \mathcal{D}' (  \cyl ) \rightarrow \mathcal{D}' (  \cyl ) $ is transversal to $  \mathcal{H}^{-1} N^*K $ and each bicharacteristic $\{ (y,s,\eta,0) : s \in (s_0-\epsilon, s_0 + \epsilon) \}$, $y\in B(y_0,\delta)$ intersects $\mathcal{H}^{-1} N^*K$ finite number of times. These properties follow for small $\epsilon,\delta >0$ by combining Lemma \ref{lahjboelihjs125} with Lemma \ref{rosl} and hence the referred proposition applies and yields the expression for $u_\pm$ as a vector of Lagrangian distributions associated with the pair $(N^*K, \Lambda_{f,\pm} \circ N^*K)$. In particular, microlocally away from $N^*K$ the term $u_\pm $ is a Lagrangian distribution over  $\Lambda_{f,\pm} \circ N^*K$. 
%Thus, we have transformed the original system into independent scalar transport equations with sources transversal to the flow. 
%In conclusion, one can solve $\tilde{u}_j $ independently and transform $\tilde{u} = \tilde{u}_j$ back to $u$ by applying the inverse $\mathcal{X}$ of $\mathcal{Y}$. 
%

Finally we show that a propagating singularity in $u_\pm$ is created at a point where the principal symbol of $S$ is non-vanishing. As $\tilde{u} \equiv E_f \tilde{S} \equiv E_f \mathcal{Y} S $ microlocally near $(x_0,t_0,\xi_0,\omega_0) $, 
the standard principal symbol formula for compositions of FIOs (for $E_f$ the formula is provided in the proposition referred above)  implies that for $ (x_0,t_0 , \xi_0,\omega_0)  \in L^\pm \R^4 \cap N^*K$, sufficiently small $s>0$ and the positively homogeneous principal symbols $\tilde{p}_{\pm,j}$,  $\tilde{q}_j$, $q_j$ of $\tilde{u}_{\pm,j}$, $\tilde{S}_j$, $S_j$ we have 
\[
\tilde{p}_{\pm,j} (y_0,s_0 + s, \eta_0,\rho_0) \simeq \tilde{q}_j (y_0,s_0 , \eta_0,\rho_0)  = M^k_j  q_k (x_0,t_0 , \xi_0,\omega_0) , 
\]
where  $M$ is an invertible matrix given as the principal symbol of $   \mathcal{Y}$ at $ (y_0,s_0 , \eta_0,\rho_0; x_0,t_0 , \xi_0,\omega_0) $. This implies that $\tilde{u}_{\pm}$ creates at $(y_0,s_0 , \eta_0,\rho_0)$ a forwards propagating singularity provided that the principal symbol $q_k (x_0,t_0 , \xi_0,\omega_0)$ of $S_k$ does not vanish for at least one $k=1,2,3$. 
Applying this to $(x_0,t_0,\xi_0,\omega_0) = (x,t,\xi,|\xi|_g)$, where $ (x,t,\xi,|\xi|_g)$ is as in the assumptions, and using $R \mathcal{X}$ to transform $\tilde{u}_{\pm}$ microlocally back into $u_\pm$ yields the creation of singularity for $u_\pm$ at  $ (x,t,\xi,|\xi|_g)$, hence finishing the proof. 
%
%Indeed, due to non-degeneracy (vector valued ellipticity) of the positively homogeneous principal symbol matrix for $\varrho \mathcal{Y}$ near $(y_0,s_0,\eta_0,\rho_0; x_0,t_0,\xi_0,\omega_0)$ the principal symbol vector of $\tilde{S}$ near $(y_0,s_0,\eta_0,\rho_0)$ is just a non-degenerate transformation of the associated vector for $S$ near $( x_0,t_0,\xi_0,\omega_0)$.  Moreover, the principal symbol of $E_f$ is a non-zero constant. 
%Putting these together give the principal symbol vector of $u_\pm$ microlocally as a non-degenerate transformation of the principal symbol of $S$. 
%Thus the formula for principal symbols indicates that 
%In particular, the positively homogeneous principal symbol vector of $\tilde{S}$ is non-vanishing near $(y_0,s_0,\eta_0,\rho_0)$ 
%Thus applying the referred proposition (see also \cite{Melrose-Uhlmann}) finishes the proof. 
%The symbol formula in the proposition implies that that $u_{\pm} $ creates a singularity at a point where the principal symbol vector for $S$ is non-zero. 
%and that in the conic neighbourhood the operators $R$ and $\mathcal{X}$ satisfy (\ref{meniscus_medialis}-\ref{meniscus_lateralis}) and Corollary \ref{kosda}.  
%
%As an additional material, the wave front sets of $u_0$ and $u_\pm$ away from $N^*K$ are computed near $(x_0,t_0,\xi_0,\omega_0)$ in Appendix \ref{appe132}.  

\end{proof}

\section{Proof of Theorem \ref{maineee}}

In this section we derive the claim of Theorem \ref{maineee}. The proof is constructed in three stages. The first and perhaps the most difficult step is to prove Lemma \ref{fd5sol345} below. 
The second step is to derive Proposition \ref{234235rwerweght} from the lemma. The claim of Theorem \ref{maineee} is deduced in the final step after that.

\begin{lemma}\label{fd5sol345}
Assume that the conditions of Theorem \ref{maineee} hold. 
Fix $z \in U$ and let $x_0  \in \Upsilon$ be a \footnote{There can be many of such points.} nearest point to $z$ on the boundary with respect to $g_1$.  
Then there is a neighbourhood $\mathcal{Q} \subset  \Upsilon$ of $x_0$ in $\Upsilon$ such that 
\[
\text{dist}_{g_2}( z,x) \leq \text{dist}_{g_1}( z,x), \quad \forall x \in \mathcal{Q}. 
\]
Moreover, if $\gamma_1 : [0,l] \rightarrow \R^3$ with  $ l:= \text{dist}_{g_1}( z,x)$ is a geodesic segment in $(\R^3,g_1)$ from $\gamma_1(0) = z$ to $\gamma_1(l) = x$, then the unique geodesic $\gamma_2$ in $(\R^3,g_2)$ with $(\gamma_2(l) ,\dot\gamma_2(l) )=(\gamma_1(l) ,\dot\gamma_1(l) )$ satisfies $\gamma_2(0)=z$. 
\end{lemma}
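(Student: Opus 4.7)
The approach is to engineer a Cherenkov experiment whose $g_1$-bicharacteristic projects spatially onto $\gamma_1$ (treating $x=x_0$ first), so that by Proposition \ref{pinheiro} the covector at $(x_0,\tau)$, with $\tau := l = \text{dist}_{g_1}(z,x_0)$, lies in the wave-front set of $u_1$ on $\Upsilon\times\R$. The matching hypothesis transfers this singularity to $u_2$, and backward tracing along the $g_2$-bicharacteristic flow recovers a $g_2$-geodesic from $x_0$ of length $\tau$ landing at $z$.

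Set $w := \dot\gamma_1(0) \in T_z\R^3$ and $\eta_0 := \flat_{g_1}w \in T^*_z\R^3$, so $|\eta_0|_{g_1}(z) = 1$. Admissibility condition (ii) of Definition \ref{admi} gives $g_{1,jk}(z) > \delta_{jk}$ as a quadratic form, hence $|\eta_0|_{\text{eucl}} > 1$. For $\beta \in \mathcal{I}_z$ with $\beta > 1/|\eta_0|_{\text{eucl}}$ (such $\beta$ exist since $\sup\mathcal{I}_z = 1$), pick $\theta = \theta(\beta) \in \S^2$ satisfying $\theta \cdot \eta_0 = 1/\beta$. Then $(\xi_0,\omega_0) := (\eta_0,-1)$ lies in $N^*_{(z,0)}K(z,\theta,\beta) \cap L^{-}_{(z,0)}\R^4$ for $g_1$, and by Definition \ref{deffa} the bicharacteristic $\Sigma_1$ of $D_t + |D_x|_{g_1}$ issued from $(z,0,\xi_0,\omega_0)$ projects spatially onto $\gamma_1$ parametrized by $g_1$-arclength. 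With a source $S$ satisfying (\ref{meq1})-(\ref{meq3}) (so its principal symbol is nonzero at $(\xi_0,\omega_0)$), Proposition \ref{pinheiro} places $(x_0, \tau, \eta, \omega_0) \in \bigcup_j WF(u_{1,j})$ where $\eta := \flat_{g_1}\dot\gamma_1(\tau)$, provided $\Sigma_1$ does not re-meet $N^*K$ in $(0,\tau]$. This follows from Lemma \ref{rosl} together with the observation that $\gamma_1$ and the source ray $r \mapsto z + r\beta\theta$ leave $z$ with different $g_1$-speeds ($1$ versus $\beta|\theta|_{g_1}(z) > 1$), so they separate at once; choosing $\beta \in \mathcal{I}_z$ appropriately rules out any re-intersection inside $\overline W$ before parameter $\tau$.

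Projecting $(x_0, \tau, \eta, \omega_0)$ to $T(\Upsilon \times \R)$ and invoking the matching hypothesis puts it in $WF(u_2)|_{T(\Upsilon \times \R)}$; since $g_1|_{x_0} = g_2|_{x_0}$, the common light-cone relation $\omega_0^2 = |\eta|_{g_2}^2(x_0)$ reconstructs the full covector, so $(x_0, \tau, \eta, \omega_0) \in WF(u_2)$. A second application of Proposition \ref{pinheiro}, now for $g_2$, expresses this as the terminal point of a forward $g_2$-bicharacteristic from some $(y,s,\xi',\omega_0) \in N^*K \cap L^{-}_{(y,s)}\R^4$ for $g_2$ with $y = z + s\beta\theta$. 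Because $\sharp_{g_2}\eta = \sharp_{g_1}\eta = \dot\gamma_1(\tau)$ at $x_0$, the backward $g_2$-unit-speed geodesic $\bar\gamma_2$ with $\bar\gamma_2(0) = x_0$ and $\dot{\bar\gamma}_2(0) = -\dot\gamma_1(\tau)$ is the time-reverse of this bicharacteristic's spatial projection, so $\bar\gamma_2(\tau - s) = z + s\beta\theta$.

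The decisive and most delicate step is forcing $s = 0$, equivalently $\bar\gamma_2(\tau) = z$. Working at a single $\beta$ only yields $\text{dist}_{g_2}(y,x_0) \leq \tau - s$, whose triangle-inequality consequence $\text{dist}_{g_2}(z, x_0) \leq \text{dist}_{g_2}(z, y) + \tau - s$ is insufficient in the superluminal regime, where $\beta|\theta|_{g_2}(z) > 1$ may hold. The rigidity I would exploit is that $\bar\gamma_2$ depends only on the intrinsic data $(x_0, \dot\gamma_1(\tau), g_2)$ and is \emph{independent of} $\beta$, whereas the emission point $z + s\beta\theta(\beta)$ varies with $\beta$; combining the Cherenkov conditions $\theta \cdot \eta_0 = 1/\beta$ and $\theta \cdot \xi'(\tau - s) = 1/\beta$ with $\bar\gamma_2(\tau - s) - z = s\beta\theta$ and $|\theta|_{\text{eucl}} = 1$ produces an overdetermined system as $\beta$ ranges over the open interval $\mathcal{I}_z$, whose only consistent branch is $s \equiv 0$. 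Consequently $\bar\gamma_2(\tau) = z$, and its reversal is the $g_2$-geodesic $\gamma_2$ of the \emph{moreover} clause, giving $\gamma_2(0) = z$ and $\text{dist}_{g_2}(z, x_0) \leq \tau$. The neighborhood $\mathcal{Q} \subset \Upsilon$ is then obtained by perturbing $x_0$: the normal $g_1$-geodesic $\gamma_x$ from $z$ to $x \in \Upsilon$ varies smoothly near $x_0$, and the entire construction applies verbatim. The main obstacle is the $s=0$ rigidity above: at any single $\beta$ the matching alone does not fix $s$, and one must carefully exploit the openness of $\mathcal{I}_z$ together with the coupling between Cherenkov angle and $\beta$.
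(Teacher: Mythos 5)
The high-level scheme of your proposal matches the paper's: build a Cherenkov source $(z,\theta,\beta)$ whose superluminal cone at $(z,0)$ contains the covector $\flat_{g_1}\dot\gamma_1(0)$, use Proposition \ref{pinheiro} for $g_1$ to place a propagating singularity at the boundary, transfer it by the data-matching hypothesis, and back-trace along the $g_2$-flow. But there are two genuine gaps, one of which you yourself flag as unresolved.

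First, the covector lift. The data identify only the projected wave-front sets on $T(\Upsilon\times\R)$, and projection to $T_{(x_0,\tau)}(\Upsilon\times\R)$ destroys the component of $(\eta,\omega_0)$ normal to $\partial W$. Imposing the light-cone relation $\omega_0^2=|\eta|^2_{g_2}(x_0)$ at the boundary point fixes the \emph{magnitude} of the normal component but not its sign, so there are two admissible lifts back to $T^*\R^4$ --- the one along the incident geodesic $\gamma_2$ and the one along its mirror image $\gamma_3$ across $T_{x_0}\partial W$. Your claim that the light-cone relation ``reconstructs the full covector'' is incorrect; the paper tracks both lifts (hence the auxiliary geodesic $\gamma_3$ in its proof) and eliminates the spurious one by requiring it to pass through the source lines inside $B\times\R$.

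Second, and more seriously, the step forcing the emission point to be $(z,0)$ (your $s=0$) is not established. Your proposed rigidity --- vary $\beta$ over the open interval $\mathcal{I}_z$ and argue that the Cherenkov relations produce an ``overdetermined system whose only consistent branch is $s\equiv0$'' --- is precisely the claim that needs a proof, and you name it as the obstacle rather than closing it. It is not even clear this strategy works cheaply: as $\beta$ varies, $\theta(\beta)$ varies too, so for each $\beta$ there can a priori exist an emission point $(y(\beta),s(\beta))\neq(z,0)$ on $K(z,\theta(\beta),\beta)$ whose $g_2$-bicharacteristic also reaches $(x_0,\tau,\eta,\omega_0)$; ruling this out requires an actual analysis of that nonlinear map, which you don't supply. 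The paper's device is different and concrete: for a fixed $\beta$ it uses the \emph{antipodal} direction $\tilde\theta$ in the Cherenkov circle $M_\beta$, so that the two world-lines $K(z,\theta,\beta)$ and $K(z,\tilde\theta,\beta)$ meet exactly at $(z,0)$, their Cherenkov cones touch along the ray through $\flat_{g_1}\dot\gamma_1(0)$ (equation \eqref{sif}), and the observed singularity must be generated by \emph{both} sources; after choosing $(\theta,\beta)$ so that those source lines avoid the relevant $g_\lambda$-geodesics away from $(z,0)$ inside $B\times\R$, the only point that can emit along the given $g_2$-bicharacteristic is $(z,0)$. A small Sard-type perturbation of $\beta$ is also needed to meet the non-degeneracy hypothesis $g_\lambda(\nabla_\theta\theta,\theta)\neq0$ of Lemma \ref{rosl} / Proposition \ref{pinheiro}; your appeal to ``choosing $\beta$ appropriately'' does not quite cover this. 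So while the skeleton of your argument is in the right place, the decisive step is a different argument in the paper, and your substitute for it is a sketch, not a proof.
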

\begin{proof}
Fix $z$ and $x_0$ as in the assumptions and 
 a small neighbourhood $\mathcal{Q} \subset \Upsilon$ of $x_0$ in $\Upsilon$.   
By making the neighbourhood small enough we may assume that every shortest geodesic segment in $(\R^3,g_1)$ from $z$ to a point in $\mathcal{Q}$ lies in $W$ and hits the boundary $\partial W$ transversally and for the first time at the endpoint. 
Fix arbitrary $x \in \mathcal{Q}$. 
% be such that $\text{dist}_{g_1}(z,x) =\text{dist}_{g_1}(z, \partial W) $. 
Let $\gamma_1 : [0,l] \rightarrow \R^3$ be an unit speed geodesic in $(\R^3,g_1)$ from $z$ to $x$ such that $l= \text{dist}_{g_1}(z,x)$. 
%Since $| \cdot |_{g_1}(z) > 1$ and $| \cdot |_{g_1} \geq  1$ hold on $\S^2$, we must have that $( \gamma_1(t) , t)$ hits $K(z,0,\theta,\beta)$ only at $(z,0)$. 
%Since $  \beta_{\partial W}(z;g_1) $
Denote $v := \dot\gamma_1 ( 0 )$. Let $\gamma_2 $ be the unique geodesic in $(\R^3,g_2)$ such that $(\gamma_2(l) , \dot\gamma_2(l)) = ( \gamma_1(l),\dot\gamma_1(l))$. Since $g_1=g_2$ on $\Upsilon$, also the geodesic  $\gamma_2$ has unit speed. 
Define %$ L(  \R^3 ; g_1)  := L^+(  \R^3 ; g_1) \cup L^-(  \R^3 ; g_1)$ and set
\[
M:= \{ (\theta,\beta) \in \S^2 \times \mathcal{I}_z : (z,0 ; -  \flat_{g_1} v ,  1  ) \in N^*K(z,\theta ,\beta)  \cap L^+(  \R^3 ; g_1) \},
\]
where $\mathcal{I}_z$ is as in the assumptions.  
%In particular, $\mathcal{I}_z$ is bounded. 
%One checks that dim$(M) \geq 2$ as a manifold.
 The space $M  $ is a trivial bundle over $ \mathcal{I}_z$,  consisting of distinct loops $M_\beta = M \cap (\S^2 \times \{ \beta \})$ on the surface $\S^2$ as fibers. 
In some coordinates the loops are just circles. 
 For each $\theta \in M_\beta$ there is the antipodal point $\tilde\theta = \tilde\theta(\theta) \in M_\beta $ in the circle $M_\beta$ which satisfies
\begin{equation}\label{sif}
N^*K(z,\theta ,\beta) \cap N^*K(z,\tilde\theta ,\beta) \cap L^+(  \R^3 ; g_1) = \{ (z,0 ; - h \flat_{g_1} v , h  ) :  h > 0 \}. 
\end{equation}
The construction  is illustrated in Figure \ref{fidur}. 

\begin{figure}[h]
%\begin{minipage}{.4\textwidth}
%\begin{figure}
\begin{tikzpicture}

\draw[dashed] (-3,0) ellipse (0.2 and 1);
\draw[rotate around={-37:(0,0)}, dashed] (-3,0) ellipse (0.2 and 1);
\draw[rotate around={-18.5:(0,0)}, color=red] (1.5,0) ellipse (0.1 and 0.5);

\draw[dashed]  (0,0) -- (-3,-1);
\draw[rotate around={-37:(0,0)}, dashed] (-3,1) -- (0,0) ;

\draw[->] (-1,0) -- (1.5,0);
\draw[->,rotate around={-37:(0,0)}] (-1,0) -- (1.5,0);

\node[label=  $\theta$] at (1,0) {};
\node[label=  $\tilde\theta$] at (0.7,-1.4) {};
\node[ color=blue, label=  ${\color{blue}-\flat_{g_1} v}$,] at (-1.5,0.5) {};
\draw[color=blue] (0,0) -- (-4.5,1.5);
\draw[thick, color=blue, ->] (0,0) -- (-1.5,0.5);
%\node[circle,fill, scale =0.3] at (-3,1) {};

  \end{tikzpicture}
 % \end{minipage} 
%  \begin{minipage}{.4\textwidth}
\caption{A projection of the construction in \eqref{sif} to the space $\R^3$. The loop $M_\beta$ is illustrated in red. The intersections $N^*K(z,\theta ,\beta) \cap  L^+(  \R^3 ; g_1)$ and $N^*K(z,\tilde\theta ,\beta) \cap  L^+(  \R^3 ; g_1)$ appear as two cones (the dashed lines) that touch each other at the half-open segment $\{ (z,0 ; - h \flat_{g_1} v , h  ) :  h > 0 \}$ (in blue).}\label{fidur}
\end{figure}
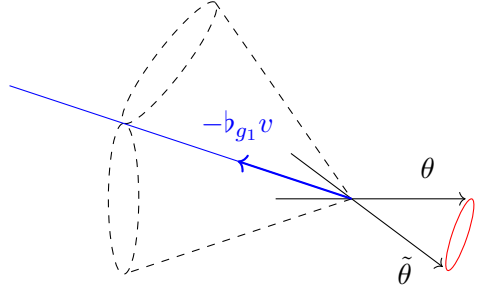

Let $\gamma_3$ be the unique unit-speed geodesic in $( \R^3,g_2)$ through $ x $ that is tangent to the mirror image of $ \dot\gamma_2(l)$ over the boundary $T_x\partial W$ at that point. 
Using the fact that $M$ is two dimensional as a manifold we fix $(\theta,\beta) \in M$ such that the lines $K(z,\theta,\beta)$ and $K(z,\tilde\theta,\beta)$ avoid the time-like geodesics 
\begin{equation}\label{123geo}
\{  (\gamma_{\tau} (r), r) : r \in \R, \ \tau=1,2,3 \}
\end{equation}
 inside $(B \times \R) \setminus \{(z,0)\}$ 
 for a large closed Euclidean ball $B \subset \R^3$ with radius $R>\sup_{\beta\in \mathcal{I}_z}R_\beta$. By assumptions, $B \times \R $ contains the singular supports of the sources. 
We would like to apply Proposition \ref{pinheiro} to systems associated with the parameters $(\theta,\beta)$ and $(\tilde\theta,\beta)$. However, it is possible that the rays $K(z,\theta,\beta)$ and $K(z,\tilde\theta,\beta)$ do not break the velocity barriers $\mathcal{B}K(z,\theta,\beta)$ and $\mathcal{B}K(z,\tilde\theta,\beta)$ in the required way. To get around this we perturb the velocity $\beta$ slightly and then apply Sard's theorem. 
 If one makes the parameter $\beta$ slightly larger while keeping everything else fixed, the rays still avoid the geodesics \eqref{123geo} in $(B \times \R) \setminus \{(z,0)\}$ but the intersection \eqref{sif} splits into two lines instead of one (See Figure \ref{giuel}). 
\begin{figure}[h]
\begin{tikzpicture}

\draw (-2.8,0) ellipse (0.2 and 1);
\draw[rotate around={-30:(0,0)}] (-2.8,0) ellipse (0.2 and 1);

\draw (-2.8,1) -- (0,0) -- (-2.8,-1);
\draw[rotate around={-30:(0,0)}] (-2.8,1) -- (0,0) -- (-2.8,-1);

%\draw[color=red] (0,0) -- (-2.88,0.77);
%\node[circle,fill, scale =0.3] at (-2.68,0.77) {};
%\node[circle,fill, scale =0.3] at (-2.9,0.86) {};

\draw[color=red, ->] (0,0) --  (-2.65,0.72);
\draw[color=blue, ->] (0,0) --  (-2.9,0.86);
%\draw[color=blue] (0,0) -- (-3.1,0.86);
%
%    \draw (-3,-3) arc (180:360:3 and 0.4) -- (0,0) -- cycle;
%  \draw (-3,-3) arc (180:0:3 and 0.4);
%
%   
%  \draw (-3,3) arc (180:360:3 and 0.4) -- (0,0) -- cycle;
%  \draw (-3,3) arc (180:0:3 and 0.4);
  \end{tikzpicture}
 % \end{minipage} 
 \caption{Increasing $\beta$ makes the two touching cones in Figure \ref{fidur} wider. Intersection of the wider cones consists of two lines, illustrated here by the blue and red arrows.}\label{giuel}
\end{figure}
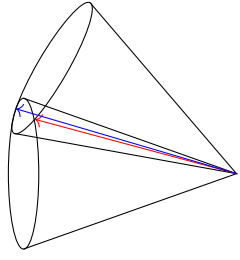
The splitting, however, happens continuously with respect to $\beta$. Thus, for every conic neighbourhood $\mathcal{V}_z \subset T_z^*\R^3$ of $ - \flat_{g_1}v$ in $T_z^*\R^3$ there is $\delta >0$ such that 
\[
N^*K(z,\theta ,\beta + \beta') \cap N^*K(z,\tilde\theta ,\beta+\beta') \cap L^+(  \R^3 ; g_1)   \subset \{ (z,0,\xi, |\xi|_g ) : \xi \in \mathcal{V}_z \} , \quad \forall \beta' \in [0,\delta) . 
\]
That is; for fixed $(\beta,\theta) \in M$, and associated $\tilde\theta$, we may add a small positive perturbation to $\beta$ without changing the intersection $\eqref{sif}$ significantly. 
By applying Sard's theorem to the functions $r \mapsto |\theta|_{g_\lambda}( z + r \theta)$ and $r \mapsto |\tilde\theta|_{g_\lambda}( z + r \tilde\theta)$ one constructs a sequence $\beta_j' \longrightarrow 0$ such that for every $j=1,2,3,\dots$ and $\lambda=1,2$ both of the following conditions hold:
\[
\begin{split}
\nabla_\theta |\theta|_{g_\lambda } (x) \neq 0, \quad \forall x \in \mathcal{B}K(z,\theta,\beta+\beta_j'), 
\\
\nabla_{\tilde\theta} |\tilde\theta|_{g_\lambda } (x) \neq 0, \quad \forall x \in  \mathcal{B}K(z,\tilde\theta,\beta+ \beta_j').
\end{split}
\]
%Hence, as $\nabla_{\theta} |\theta|_g = g( \nabla_\theta , \theta)$
This is equivalent to saying that $g_\lambda (\nabla_\theta \theta , \theta )|_x \neq 0$ and $g_\lambda (\nabla_{\tilde\theta} \tilde\theta , \tilde\theta )|_x \neq 0$ in the associated points 
 and hence the phase velocity barrier is broken in the required way for the approximating rays.  
 We may assume that $\beta+\beta_j'$ lies in $\mathcal{I}_z$ for all $j=1,2,3,\dots$ by excluding a finite number of terms in the beginning of the sequence.

 Let us fix $v_j \in T_z \R^3$, $j=1,2,3,\dots$, such that 
 \[
 (z,-\flat_{g_1}v_j,1) \in N^*K(z,\theta ,\beta + \beta_j') \cap N^*K(z,\tilde\theta ,\beta+\beta_j') \cap L^+(  \R^3 ; g_1) . 
 \]
 Then, 
 $v_j \longrightarrow v$ and the associated unit speed geodesics $\gamma_{1,j}:= \gamma_{z,v_j}$ in $(\R^3,g_1)$ approximate the segment $\gamma_1$. %We now assume that $x$ is close to $x_0$ by restricting it to a small neighbourhood $\mathcal{Q}$ around the point. 
The segment $\gamma_1$ is then close to a shortest curve from $z$ to the boundary and therefore small perturbations of it hit the boundary transversally near the point $\gamma_1(l) = x$. 
% (cf. \cite[Lemma 2.15]{MR1889089}). %Notice that this would not in general be true without the requirement $x \in \mathcal{Q}$ as an optimal geodesic between two points can touch the boundary tangentially.
 Consequently, we may assume that there is a sequence of positive real numbers $l_{j} \geq l$ such that $l_{j} \longrightarrow l$ and $ \gamma_{1,j} (l_{j} ) \in \Upsilon$. Let us denote by $\gamma_{2,j} $ the geodesics in $(\R^3,g_2)$ with the property $( \gamma_{2,j}(l_{j} ) ,  \dot\gamma_{2,j}(l_{j} ) ) = ( \gamma_{1,j}(l_{j} ) ,  \dot\gamma_{1,j}(l_{j} ) )$. 
These geodesics have unit speed and they approximate $\gamma_2$. 
 
 We now apply Proposition \ref{pinheiro} for $g=g_1$ using $(z,\theta,\beta + \beta'_j)$ and $(z,\tilde\theta,\beta + \beta'_j)$, $j=1,2,3,\dots$ as fixed parameters. 
 The point is that there must be a singularity in $u_\lambda |_{\lambda=1} $ that propagates along a bicharacteristic $\Sigma_j(r)$ of the form \eqref{optomet} emanating from $(z,0,-\epsilon \flat_{g_1} v_j,  \epsilon ) \in \bigcup_{j=1,2,3 } WF(S_j)$, where $\epsilon \in \{-1,1\}$. 
 %Existence for the initial point in the wave front set  of the source follows from the assumptions. 
Moreover, since $\beta+\beta_j' > \inf \mathcal{I}_z > \beta_{ \partial W} (z)  $ by assumptions,  the propagating singularity for large $j$ can not hit the source and get annihilated before hitting $\Upsilon$  as otherwise it would contradict the definition of  $\beta_{ \partial W} (z)$. 
 Let us denote $WF(u) := \bigcup_{k=1,2,3}  WF (u_k)$. By putting together the proposition and the construction above, we conclude
 \[
 \Sigma_j (l_j) =  (\gamma_{1,j}(l_j) , l_j, -\epsilon\flat_{g_1} \dot\gamma_{1,j} ( l_j) , \epsilon ) \in WF ( u_\lambda ( \ \cdot \  ; z,\theta,\beta+\beta_j' ) )\big|_{\lambda=1} \cap WF ( u_\lambda ( \  \cdot \  ; z,\tilde\theta,\beta+\beta_j' ) ) \big|_{\lambda=1} 
 \]
 where $\epsilon \in \{-1,1\}$ and $u_\lambda$ is as in the assumptions. Denote $x_j := \gamma_{1,j}(l_j) = \gamma_{2,j}(l_j)$. 
% The pull-back Fourier intergral operator $\iota^*$ of the trivial embedding $\iota: \Upsilon \times \R \hookrightarrow \R^4$ has a constant non-zero symbol and corresponds to the canonical relation
% \begin{equation}\label{reffu}
%\Lambda := \{ (x,t, \xi, \omega; x,t,\eta, \omega) : x \in \Upsilon , \ \xi =  \eta \big|_{T_x \Upsilon} \}. 
% \end{equation}
%
%
%The linear map 
% \[
% {}^tD \iota_{(x_j,l_j)} : T^*_{(x_j,l_j)}  \R^4  \rightarrow T_{(x_j,l_j)}^*(\Upsilon \times \R) : (\xi,\omega) \mapsto (\xi|_{T_{x_j}\Upsilon}, \omega),  \quad \iota: \Upsilon \times \R \hookrightarrow \R^4
% \]
%%where $\iota: \Upsilon \times \R \hookrightarrow \R^4$, 
% is injective on 
% \[
% \mathcal{C}_j= \{ (x_j,l_j, e \xi, e |\xi|_g ) : e \xi_j \nu^j >0, \ e\in \{-1,1\} \}  \subset T^*\R^4 ,
% \]
% where $\nu \in T_{x_j} \R^3$ is the interior normal of $\partial W$ at $x_j$,  
 %
 %
 %
 % the light-cone $L^{e}_{(x_j,l_j)} \R^4$, $e = sgn(\epsilon)$, 
  The covector $ (x_j , l_j, -\epsilon \flat_{g_1} \dot\gamma_{1,j} ( l_j) , \epsilon )= (x_j , l_j, -\epsilon \flat_{g_2} \dot\gamma_{2,j} ( l_j) , \epsilon ) $, has a non-zero correspondent in 
 \[
 WF ( u_{\lambda=1} ( \ \cdot \  ; z,\theta,\beta+\beta_j' ) )\big|_{T(\Upsilon \times \R) } \cap WF ( u_{\lambda=1} ( \  \cdot \  ; z,\tilde\theta,\beta+\beta_j' ) ) \big|_{T(\Upsilon \times \R)} .
 \]
As the data for both metrics coincide, the set above equals 
 \begin{equation}\label{eroqs}
 \begin{split}
 WF ( u_{\lambda=2} ( \ \cdot \  ; z,\theta,\beta+\beta_j' ) )\big|_{T(\Upsilon \times \R) } \cap WF ( u_{\lambda=2} ( \  \cdot \  ; z,\tilde\theta,\beta+\beta_j' ) ) \big|_{T(\Upsilon \times \R)} 
 % \subset \Lambda_\iota' \circ  WF ( u_\lambda ( \ \cdot \  ; z,\theta,\beta+\beta_j' ) )\big|_{\lambda=2} \cap  \Lambda_\iota'   \circ WF ( u_\lambda ( \  \cdot \  ; z,\tilde\theta,\beta+\beta_j' ) ) \big|_{\lambda=2} 
  \end{split}
 \end{equation}
%Hence it must be that $(\gamma_{2,j}(l_j) , l_j, -\epsilon \flat_{g_2} \dot\gamma_{2,j} ( l_j) , \epsilon )$ 
% 
% which means that both $WF ( u_{\lambda=2} ( \ \cdot \  ; z,\theta,\beta+\beta_j' ) )$ and $WF ( u_{\lambda=2} ( \  \cdot \  ; z,\tilde\theta,\beta+\beta_j' ) ) \big|_{T(\Upsilon \times \R)} $ are non-zero 
% which is a subset of 
% \[
%  \Lambda \circ  WF ( u_\lambda ( \ \cdot \  ; z,\theta,\beta+\beta_j' ) )\big|_{\lambda=2} \cap  \Lambda   \circ WF ( u_\lambda ( \  \cdot \  ; z,\tilde\theta,\beta+\beta_j' ) ) \big|_{\lambda=2} 
% \]
% where $\Lambda$ is the canonical relation \eqref{reffu}. 
%The inverse of $ ^tD \iota_{(x_j,l_j)}|_{L^+\R^4} $ identifies $(\gamma_{1,j}(l_j) , l_j, -\flat_{g_1} \dot\gamma_{1,j} ( l_j) , 1 )$ to a vector in $L^+(\R^4;g_2)$.
 Applying Proposition \ref{pinheiro}, this time with respect to the metric $g_2$, yields  that 
 \[
  \begin{split}
 &WF ( u_{\lambda=2} ( \  \cdot \  ; z,\theta,\beta+\beta_j' ) )  \setminus N^*K(z,\theta,\beta+\beta_j' ) \\
 & \subset \Lambda_{f,\epsilon} \circ  WF(S_{\lambda=2}(z,\theta,\beta+\beta_j' )  )    \\ 
 & \subset   \Lambda_{f,\epsilon} \circ \big( N^*K(z,\theta,\beta+\beta_j' ) \cap  T^* (B \times \R)   \big)  
 % \subset L\R^4,
 %\quad  \subset \mathcal{C}
    \end{split}
 \]
 %One checks
% and 
%  \[
%  \begin{split}
% WF ( u_\lambda ( \  \cdot \  ; z,\tilde\theta,\beta+\beta_j' ) ) \big|_{\lambda=2}& \subset \Lambda_+' \circ  WF(S_{\lambda}(z,\tilde\theta,\beta+\beta_j' ) \big|_{\lambda=2} \\
% & \subset   \Lambda_+' \circ \big( N^*K(z,0,\tilde\theta,\beta+\beta_j' ) \cap  T^*(W \times \R)  \big)  \subset L^+(\R^4;g_2),
%    \end{split}
% \]
 where $\Lambda_{f,\epsilon} $ refers to the forward flowout canonical relation \eqref{jksks20kasfdbg} for $g=g_2$. 
Hence the projection of $ (x_j , l_j, -\epsilon \flat_{g_2} \dot\gamma_{2,j} ( l_j) , \epsilon )$ in \eqref{eroqs} lifts into two possible covectors in the wave front set $WF ( u_\lambda ( \  \cdot \  ; z,\theta,\beta+\beta_j' ) ) $, both lying in a curve of the form \eqref{optomet} emanating from $ \big( N^*K(z,\theta,\beta+\beta_j' ) \cap  T^* (B \times \R)   \big)$. 
These two covectors are  $ (x_j , l_j, -\epsilon \flat_{g_2} \dot\gamma_{2,j} ( l_j) , \epsilon )$ and $ (x_j , l_j, -\epsilon \flat_{g_2} \dot\gamma_{3,j} ( l_j) , \epsilon )$ where, analogously to $\gamma_3$, the $\gamma_{3,j}$ stands for the unit-speed geodesic in $(\R^3,g_2)$ through $x_j$ with velocity tangent to the mirror image of $\dot\gamma_{2,j}(l_j)$ over $T_{x_j} \partial W$ at that point. 
%These geodesics approximate the curves 
%\[
% (x_j , l_j, -\epsilon \flat_{g_2} \dot\gamma_{2,j} ( l_j) , \epsilon )
%\]
Thus at least one of these covectors lies  in a bicharacteristic \eqref{optomet} (for $g=g_2$) that emanates from $N^*K(z,\theta,\beta+\beta_j' ) \cap  T^*(B \times \R) $. In particular, one of the curves $r\mapsto (\gamma_{2,j} (r),r)$ and $r\mapsto (\gamma_{3,j} (r),r)$ intersects $K(z,\theta,\beta+\beta_j' ) \cap  (B \times \R)$. 
These curves approximate the geodesics  $r\mapsto (\gamma_{2} (r),r)$ and $r\mapsto (\gamma_{3} (r),r)$ arbitrarily well inside the bounded set $(B \times \R) \setminus \{(z,0)\}$ 
 so the intersection for large $j$  can take place only at the point $(z,0)$ due to the way the parameters $(\theta,\beta) \in M$ were fixed.  Thus, we deduce that $\gamma_{2,j}(0) = z$. In particular, each $\gamma_{2,j}$ is a geodesic from $z$ to $x_j=\gamma_{2,j}(l_j) \in \Upsilon$ in $(\R^3,g_2)$ with length $l_j$. Consequently, we have $ \text{dist}_{g_2}(x_j , z) \leq l_j $ which implies
 \[
\text{dist}_{g_2}(x , z) \leq l = \text{dist}_{g_1}( x,z)  
   \]
  at the limit $j \longrightarrow \infty$. Moreover, as $\gamma_{2,j}$ converges to $\gamma_2$ we conclude that $\gamma_2$ connects $z$ to $x$ in $(M_2,g_2)$ and has the distance above as length. 
  \end{proof}
  
  We can now apply the lemma above to deduce the following proposition: 
 
%In conclusion;
\begin{proposition}\label{234235rwerweght}
Assume that the conditions of Theorem \ref{maineee} hold. Then 
\[
\text{dist}_{g_1} (z,\partial W) =  \text{dist}_{g_2} (z,\partial W) %\quad \forall z \in U  .
\]
for every $ z \in U$. Moreover, a boundary point in $ \Upsilon $ is the nearest to $z$ in $(W,g_1)$ if and only if it is a nearest point in $(W,g_2)$ and for any such point $x_0 \in \Upsilon$ there exists a neighbourhood $\mathcal{Q}$ of $x_0$ in the space $\partial W$ such that 
\[
\text{dist}_{g_1} (z,x) =  \text{dist}_{g_2} (z,x), \quad \forall x \in \mathcal{Q}. 
\] 
\end{proposition}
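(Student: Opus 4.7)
The plan is to apply Lemma \ref{fd5sol345} in both orderings of the two metrics and chain the resulting local distance comparisons. The key observation is that all hypotheses of Theorem \ref{maineee} (admissibility of both $g_\lambda$, stability of $\Upsilon$ for both metrics, agreement of $g_1$ and $g_2$ on $\Upsilon$, and equality of the boundary data) are symmetric under the swap $g_1 \leftrightarrow g_2$, so Lemma \ref{fd5sol345} applies with either labelling.

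Fix $z \in U$. By stability of $\Upsilon$ with respect to both metrics, choose $x_\lambda \in \Upsilon$ with $\text{dist}_{g_\lambda}(z,x_\lambda)=\text{dist}_{g_\lambda}(z,\partial W)$ for $\lambda=1,2$. Applying Lemma \ref{fd5sol345} at $x_0:=x_1$ furnishes a neighbourhood in $\Upsilon$ on which $\text{dist}_{g_2}(z,\cdot)\le\text{dist}_{g_1}(z,\cdot)$; evaluating at $x_1$ gives
\[
\text{dist}_{g_2}(z,\partial W)\le\text{dist}_{g_2}(z,x_1)\le\text{dist}_{g_1}(z,x_1)=\text{dist}_{g_1}(z,\partial W).
\]
Repeating the argument with the roles of $g_1$ and $g_2$ interchanged (applying the lemma at $x_2$) yields the reverse inequality, hence the equality of boundary distances.

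For the equivalence of nearest-point sets, suppose $x_0\in\Upsilon$ is $g_1$-nearest to $z$. Then Lemma \ref{fd5sol345} at $x_0$ gives
\[
\text{dist}_{g_2}(z,x_0)\le\text{dist}_{g_1}(z,x_0)=\text{dist}_{g_1}(z,\partial W)=\text{dist}_{g_2}(z,\partial W),
\]
so $x_0$ is also $g_2$-nearest. The converse follows by the symmetric argument.

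Finally, at any such common nearest point $x_0$ the lemma applies in both orderings, yielding neighbourhoods $\mathcal{Q}_1,\mathcal{Q}_2\subset\Upsilon$ of $x_0$ on which $\text{dist}_{g_2}(z,\cdot)\le\text{dist}_{g_1}(z,\cdot)$ and $\text{dist}_{g_1}(z,\cdot)\le\text{dist}_{g_2}(z,\cdot)$ respectively. Their intersection $\mathcal{Q}:=\mathcal{Q}_1\cap\mathcal{Q}_2$ is open in $\Upsilon$; and since $\Upsilon$ is open in $\partial W$ by the definition of a stable part, $\mathcal{Q}$ is open in $\partial W$, completing the claim. I do not anticipate a substantive obstacle: Lemma \ref{fd5sol345} is the heart of the matter and has already been proved, and the remaining work is essentially a bookkeeping argument exploiting the $g_1\leftrightarrow g_2$ symmetry of the hypotheses and the stability of $\Upsilon$.
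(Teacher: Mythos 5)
Your proposal is correct and follows essentially the same route as the paper: chain the one-sided distance comparisons from Lemma \ref{fd5sol345}, exploit the $g_1\leftrightarrow g_2$ symmetry of the hypotheses to reverse the inequality, and then intersect the two neighbourhoods obtained by applying the lemma in both orderings. The only (welcome) difference is that you are more explicit than the paper about which nearest point ($x_1$ for $g_1$, $x_2$ for $g_2$) is used when invoking the lemma in each direction.
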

\begin{proof}
 As a direct consequence of the lemma above we get that 
 \begin{equation}\label{rore}
 \text{dist}_{g_2}( z,\partial W) \leq \text{dist}_{g_2}( z, x_0) \leq \text{dist}_{g_1}( z,x_0) =\text{dist}_{g_1}( z,\partial W) .
 \end{equation}
By exchanging the roles of metrics and repeating the proof one derives the opposite inequality: 
\[
\text{dist}_{g_1} (z, \partial W) \leq \text{dist}_{g_2} (z, \partial W).
\]
Thus,
\[
\text{dist}_{g_1} (z, \partial W) =  \text{dist}_{g_2} (z, \partial W) . 
\]
Substituting the identity above in \eqref{rore} one deduces that 
\[
 \text{dist}_{g_2}( z,\partial W) =  \text{dist}_{g_2}( z,x_0),
\]
which implies that $x_0$ is a nearest boundary point also in $(W, g_2)$. 
Similarly one shows that a nearest boundary point in $(W, g_2)$ is nearest also in $(W, g_1)$. 
In view of Lemma \ref{fd5sol345} there is a neighbourhood $\mathcal{Q}$ of $x_0$  such that 
\[
 \text{dist}_{g_2}( z,x)  \leq  \text{dist}_{g_1}( z,x), \quad \forall x \in \mathcal{Q}. 
\]
 Again, we may swap the metrics and repeat the argument to obtain the reversed inequality for a sufficiently small neighbourhood $\mathcal{Q}$.
In conlusion, 
\[
 \text{dist}_{g_2}( z,x) =  \text{dist}_{g_1}( z,x), \quad \forall x \in \mathcal{Q}. 
\]
\end{proof}
%In conclusion,  \eqref{so023478} holds. 

We finish the section by deriving the main result from the proposition above:  

\begin{proof}[Proof of Theorem \ref{maineee}]
It suffices to show that $g_1|_{z} = g_2|_{z}$ at arbitrary $z\in U$. 
Let $x_0 \in \Upsilon$ be a nearest boundary point to $z$.  Let $\mathcal{Q}$ be a neighbourhood of $x_0$ in $\partial W$ as in the proposition above. 
As shown in \cite[Lemma 2.15]{MR1889089}, the distance function $ x \mapsto \text{dist}_{g_\lambda}(x,z)$ on a sufficiently small $\mathcal{Q}$ is smooth and the derivative $\phi_\lambda (x):= -d_w \text{dist}_{g_\lambda}(x,w) |_{w=z}$ defines 
a diffeomorphism from $\mathcal{Q}$ to an open set $\phi_\lambda \mathcal{Q}$  in $\{ \theta \in T_z\R^3 :  | \theta |_{g_\lambda}=1 \}$.\footnote{We unfortunately have the opposite roles for $z$ and $x$ compared to the notation in the reference.}
%In fact, we may assume that  
%for some $\epsilon>0$.
The derivative $\phi_\lambda(x)$ points towards $x$ along an optimal geodesic segment between $z$ and $x$. 
It follows from Proposition \ref{234235rwerweght} that $\phi_1 = \phi_2$ on $\mathcal{Q}$. %Moreover, for distance functions in general the differential is of unit speed. 
%Denote $\Theta :=  \phi_1 \mathcal{Q}= \phi_2 \mathcal{Q}$. 
Thus, for every $v$ in the open cone  $\Gamma_z  := \R_+ \phi_1 \mathcal{Q}= \R_+ \phi_2 \mathcal{Q} \subset T_z \R^3$ we have
\begin{equation}\label{opelw}
| v |_{g_1} = | v |_{g_2}, \quad \forall v \in  \Gamma_z. % \quad \Theta :=  \phi_1 \mathcal{Q}= \phi_2 \mathcal{Q}. 
\end{equation}
Fix basis vectors $e_1,e_2,e_3 \in \Gamma_z $ for $T_z\R^3$ so close to each other that  $e_j + e_k \in \Gamma_z$ holds for every $j,k=1,2,3$.
The identity \eqref{opelw} then implies 
\[
g_1(e_j,e_k) = \frac{1}{2}  |e_j + e_k|_{g_1}^2 - \frac{1}{2} |e_j|_{g_1}^2-\frac{1}{2} |e_k|_{g_1}^2 = \frac{1}{2}  |e_j + e_k|_{g_2}^2 - \frac{1}{2} |e_j|_{g_2}^2-\frac{1}{2} |e_k|_{g_2}^2= g_2(e_j,e_k).
\]
In conclusion, for every $v= v^je_j $ and $w= w^k e_k$ in $T_z \R^3$ we have 
\[
g_1(v,w) = v^j w^k g_1(e_j,e_k)= v^j w^k g_2(e_j,e_k) = g_2(v,w),
\]
that is, $g_1|_z = g_2|_z$. 
\end{proof}

\appendix
\section{Supplemental Material for Section \ref{micromikko} }\label{appe1}

%\subsection{Omitted proofs}

\begin{proof}[Proof of Lemma \ref{equdfkss}]
 %Fix $(x_0,t_0,\xi_0, \omega_0 ) \neq 0$ and assume first that $\omega_0 \neq -| \xi_0 |$. Fix a conic neighbourhood $\Gamma$ of $(x_0,t_0,\xi_0, \omega_0 )$ in $T^* \R^4$ such that 
% \[
% \Gamma \cap (\R^4 \times \{ ( \xi,\omega) : \omega = -|\xi| \}) = \emptyset. 
% \] 
 %Let $\chi \in C_c^\infty( \R^3 )$ such that $0\leq \chi\leq 1$, $\chi(\xi_0) = 0$, $\chi(0) = 1$. 
 Let us prove the case $\pm = +$. The proof for the other sign is similar. %We can proceed with general $F \in M_3 (  \Psi^1_{cl} ( \R^3 ) ) $. 
By assumptions, $P^k_j= p^k_j (x,D_x,D_t) $, where
\[
p^k_j (x,\xi,\omega)  = ( | \xi |_g^2 - \omega^2 ) \delta^k_j + r^k_j (x,\xi) , \quad j,k=1,2,3, 
\]
for some $r_j^k(x,\xi) \sim \sum_{m=-1}^\infty a_{j,-m}^{k}(x, \xi)$ (we may as well proceed with general $F \in M_3 (  \Psi^1_{cl} ( \R^3 ) ) $)  where $a_{j,-m}^k (x,\xi)$ is positively homogeneous of degree $-m$ with respect to $\xi$. 
%In fact, we can proceed with more general $F \in M_3 \Psi^1_{cl} ( \R^3 )$. 
%Cutting off the symbol at the origin by multiplying with $(1-\chi(\xi))^2$ does not change the operator modulo $S^{-\infty}$. 
Let us formally solve 
\[
   ( D_t \delta_j^l - W^l_j(x,D_x)  ) Z^k_l  -  P^k_j  \equiv 0  %\quad \text{microlocally in direction} \quad   \Gamma . 
\]
 by substituting an ansatz in the asymptotic form  
\begin{align}
&W^k_j = w^k_j(x,D_x), \quad  w^k_j (x,\xi)   \sim  \sum_{m=-1}^\infty w^k_{j , -m}(x,\xi) , \quad
%\quad 
%+w^k_{j , -1} (x,\xi) + w^k_{j , -2}(x,\xi)   + \cdots , \quad 
%w^k_{j , 1-h} \in S^{1-h} ( \R^3 \times \R^3) ,
 w^k_{j,1} :=    | \xi |_g \delta^k_j , \\
&Z^k_j = z^k_j(x,D_x,D_t), \quad   z^k_j (x, \xi , \omega)    \sim  \sum_{m=-1}^\infty z_{j,-m} ^k ( x, \xi, \omega )  ,  \quad 
z_{j,1}^k(x,\xi,\omega) := -  ( \omega + |\xi|_g ) \delta^k_j
%z_{j,-1} ^k ( x,t, \xi, \omega )  + \cdots
\end{align}
where $w^k_{j , -m}$ and $z_{j,-m}$ are positively homogeneous of degree $-m$. 
We apply the standard formula 
\begin{equation}\label{gener}
Symbol (AB) \sim \sum_\alpha  \frac{1}{\alpha!}  (\partial_{\xi}^\alpha a ) ( D^\alpha b ) , \quad a:= Symbol (A), \quad b:= Symbol(B) ,
\end{equation}
(see e.g. \cite[Theorem 3.6]{Grigis-Sjostrand})
 put together terms that correspond to the same degree of homogeneity and 
assume that they sum to zero. 
Multiplying the highest order terms of the ansatz gives the correct principal part, $ ( | \xi |_g^2 - \omega^2 ) \delta^k_j $, whereas for 
the rest of the terms one deduces the following conditions:
\begin{align}
\omega z^k_{j,-m-1}  - \sum_{ s + h + |\alpha |= m } \frac{1}{\alpha !} ( \partial_\xi^\alpha w_{j,-h}^l ) ( D_x^\alpha z^k_{l, -s} ) = a_{j,-m}^{k}, \quad m=-1,0,1,2,3,\dots.
\end{align}
By extracting the terms corresponding to $s=m+1$ and $h= m+1$ we obtain  
%In this way the problem reduces to solving in $\Gamma   \setminus ( \R^4 \times (\R \times \text{supp}( \chi )) )$ the following equations 
\begin{align} 
%&( \omega   -   | \xi |_g )   z_{j,0}^k +w^k_{j , 0}   ( \omega + |\xi|_g ) - \delta^k_j    \sum_{h=1,2,3} \partial_{\xi_h} |\xi| D_h |\xi|  = a^{k,l}_j \xi_l,  \quad
%\quad  \text{in} \quad \Gamma  \setminus \text{supp}( \chi ), \\
%( \text{terms of order }1)
% \\
%&( \omega   -   | \xi |_g )   z_{j,-1}^k +w^k_{j , -1}   ( \omega + |\xi|_g ) - 
% \sum_{ (\alpha,h,s) \in J_0    } \frac{1}{\alpha!} ( \partial_{\xi}^\alpha w_{j,-h}^l )( D^\alpha z^k_{l,-s } ) \label{sekek}
%= b^k_j,  \\
%( \text{terms of order }0)\\
&( \omega   -   | \xi |_g )   z_{j,-m-1}^k +w^k_{j , -m-1}   ( \omega + |\xi|_g ) -\sum_{  J_m    } \frac{1}{\alpha!} ( \partial_{\xi}^\alpha w_{j,-h}^l )( D^\alpha z^k_{l,-s } )  = a_{j,-m}^{k} ,  \label{saod}
%&m = 1,2,3,\dots
  %\text{in} \quad \Gamma  \setminus \text{supp}( \chi ), 
  \\
  &J_m := \{ ( \alpha,h,s)  :   s+h = m - | \alpha |, \ | \alpha | \geq 1, \ s,h\geq -1  \} \subset \mathbb{N}^3 \times \{-1,\dots,m\}^2   \label{fikek}
\end{align}
for  $j,k = 1,2,3 $ and $m = -1,0, 1,2,3,\dots$ 
Fixing $\omega = | \xi|_g$ in the equation yields 
\[
w^k_{j , -m-1}(x,\xi) =\frac{1}{2|\xi|_g} \bigg( \sum_{  J_m    } \frac{1}{\alpha!} ( \partial_{\xi}^\alpha w_{j,-h}^l )( D^\alpha z^k_{l,-s } )   + a^{k}_{j,-m} (x, \xi)  \bigg)
\]
which solves $w^k_{j , -m-1}$ recursively from the terms of degree $-m,\dots,1$.  
Analogously, one derives $z_{j,-m-1}^k(x,\xi,\omega) = - w^k_{j , -m-1}(x,\xi) $ by setting $ \omega = -|\xi|_g$ in \eqref{saod} which also removes the terms with $\omega$. 
However, the elements obtained in this way have singularities at $\{ \xi = 0\}$. To get around this one redefines the terms by multiplying with the cut-off $\chi( \frac{ \omega   }{   2 \epsilon  |\xi| }  )$,  where $\chi \in C^\infty_c (\R)$ such that $\chi = 1$ in $(-1/2,1/2) $ and $0$ outside $(-1,1)$. 
 The cut-off does not change the elements in $\mathcal{V}$ and hence 
%\[
%\begin{split}
%&z_{j,0}^k( x,\xi,\omega ) =  \frac{1}{\omega- |\xi|_g }\bigg( \delta^k_j    \sum_{h=1,2,3} \partial_{\xi_h} |\xi| D_h |\xi|  - w^k_{j , 0}   ( \omega + |\xi|_g ) + a^{k,l}_j (x)\xi_l  \bigg) \\
%&=  \frac{1}{\omega- |\xi|_g }\bigg( \delta^k_j    \sum_{h=1,2,3} \partial_{\xi_h} |\xi| D_h |\xi|  - \frac{\omega + |\xi|_g}{2|\xi|_g} \bigg( \delta^k_j    \sum_{h=1,2,3} \partial_{\xi_h} |\xi| D_h |\xi|  +a^{k,l}_j (x)\xi_l  \bigg)   + a^{k,l}_j (x)\xi_l \bigg) \\
%&=   -  \frac{1 }{2|\xi|_g} \bigg(  \delta^k_j \sum_{h=1,2,3} \partial_{\xi_h} |\xi| D_h |\xi| + a^{k,l}_j (x)\xi_l  \bigg) = -w^k_{j , 0}(x,\xi)   \\
%%&R(\xi) :=  \bigg(  \delta^k_j \sum_{h=1,2,3} \partial_{\xi_h} |\xi| D_h |\xi|  \bigg).
%\end{split}
%\]
%This implies that in \eqref{sekek} only the first two terms are unknown. Thus, we can apply the same technique as above to obtain $w^k_{j,-1}$ and $z^k_{j,-1}$ from  \eqref{sekek}. By continuing in this way we solve the rest of the pairs recursively from \eqref{fikek}.  More precisely, one deduces
%$w_{j,-m}^k( x,\xi )  = -z_{j,-m}^k( x,\xi , \omega) %=     \frac{1 }{2|\xi|_g}  \sum_{ (\alpha,h,s) \in J_m    } \frac{1}{\alpha!} ( \partial_{\xi}^\alpha w_{j,-h}^l )( D^\alpha z^k_{l,-s } )
%$
%for $m=0,1,2,3,\dots$ as in the case $m = 0$ above. 
%%By fixing a representative for the resulting asymptotic series yields a symbol that outside the support of $\chi$ equals the symbol of $P^k_j $ modulo $S^{-\infty}$. 
the claim follows for $h$ given by  $h^k_j \sim \sum_{m=0}^\infty w_{j-m}^k$. 
%%The claim then follows by noticing that changing the symbol in a compact set changes the operator only by a term in the class of regularising operators.  
\end{proof}

\begin{proof}[Proof of Lemma \ref{wiikki}]
We begin by introducing an useful notation. 
A composition $V W$ of two operators $V\in \mps{l_1}$ and $W\in \mps{l_2}$  with symbol matrices $v \in  M_3( S^{l_1}( \R^4 \times \R^4) )$ and $w \in M_3 (S^{l_2}( \R^4 \times \R^4) )$ admits as a symbol the matrix $v \# w \in M_3 (S^{l_1+l_2} ( \R^4 \times \R^4) )$, given by 
 \[
 (v \# w)^k_j  (x,t,\xi,\omega) \sim \sum_{s=0}^\infty \sum_{|\alpha| =s } \frac{1}{\alpha!} (\partial^\alpha_\xi v^l_j (x,t,\xi,\omega))( D_x^\alpha w_l^k (x,t,\xi,\omega)), \quad j,k=1,2,3.
 \]
  %as a symbol. 
  Further, if $v$ and $w$ have classical asymptotic developments $v\sim \sum_{h=-l_1}^\infty v_{-h}$ and $w\sim \sum_{h=-l_2}^\infty w_{-h}$ for homogeneous $w_{-h},v_{-h} \in M_3(S^{-h}_{hom}( \R^4 \times \R^4)) $ of degree $-h$, we have the classical asymptotics
\[
v \# w \sim \sum_{h=-l_2} v \# w_{-h} := \sum_{s=-l_1-l_2}^\infty \sum_{h=-l_2}^{l_1+s} v_{h-s} \# w_{-h} 
\]
The proof of these formulas reduce to the scalar case (cf. \cite[Theorem 3.6]{Grigis-Sjostrand} or \cite[Theorem 2.5.2]{Duistermaat}) by considering each entry individually. 

%Let $a_0$ be a smooth matrix, positively homogeneous  of degree $m$. 
%The inverse of the $3 \times 3$ matrix $a(x,t,\xi,\omega)$ on $ \Sigma_{\epsilon,\delta}$ 
%can be expressed in the form 
%\begin{equation}\label{aissao1325}
%a^{-1} =  \frac{1}{\det {a} }
%\begin{pmatrix} A &D&G\\
%B & E & H \\
%C&F& I
%\end{pmatrix}
%\end{equation}
%where the entries $A,B,C,D,E,F,G,H,I$ . In $ \Sigma_{\epsilon,\delta}$ the determinant of $a$ is a non-vanishing, positively homogeneous of degree $3m$.
% In particular, for every compact $K \subset \R^4$ there is $c>0$ such that  
%\[
% |\det a_0(x,t,\xi,\omega)| \geq c | (\xi, \omega) |^{3  m}  \quad \forall (x,t,\xi,\omega) \in \Sigma,
% \]
%%by positive homogeneity of $a_0$.
%  where $| \cdot |$ stands for the euclidean norm in $\R^4$.  
Let us now continue to the actual proof of the lemma. 
Set $a := ( a^k_j)_{j,k=1,2,3} \in M_3 ( S^{m}_{cl} ( \R^4 \times \R^4))$ where the entry $a^k_j \in S^{m}_{cl} ( \R^4 \times \R^4)$ stands for the symbol of $L^k_j$.
Each entry admits an asymptotic expansion $a_j^k  \sim \sum_{h=-m}^\infty  a_{j,-h}^k$, where $a_{j,-h}^k(x,t,\xi,\omega)$ is positively homogeneous of degree $-h$ in $\xi,\omega$ and we define $a_{-h} = (a_{j,-h}^k)_{j,k=1,2,3} \in M_3( S^{-h} ( \R^4 \times \R^4  ) )$. Without loss of generality we may fix $\epsilon,\delta>0$ such that  $a_m$ is invertible in the conic set 
 \begin{equation}\label{rorfe2d}
 \Sigma_{\epsilon,\delta} := \left\{ (x,t,\xi,\omega) \in \R^4 \times( \R^4 \setminus 0 ): |(x,t)-(x_0,t_0)|\leq \epsilon, \ \left| \frac {(\xi,\omega)}{ | (\xi,\omega)| } - \frac{ ( \xi_0, \omega_0) }{|( \xi_0,\omega_0)|} \right| \leq \delta \right\}  .
 \end{equation}
As the matrix $a_m$ is positively homogeneous of degree $m$ in $\xi,\omega$, the inverse matrix $a^{-1}_m$ on $ \Sigma_{\epsilon,\delta}$ is positively homogeneous of degree $-m$. By proceeding as in the proof of \cite[Theorem 4.1]{Grigis-Sjostrand}  we construct a suitable matrix $b= (b^k_j)_{j,k=1,2,3}$ where each $b^k_j$ defines the symbol of $R^k_j$, $j=1,2,3$. %The solution is derived in the asymptotic form 
$b \sim  \sum_{h=m}^\infty b_{-h}$. %, $b_{-h} \in M_3( S^{-h}_{hom} ( \R^4 \times \R^4) )$. 
Let $\chi \in C^\infty(\R)$ be a cut-off function such that $\chi|_{(0,\delta/2)}=1$ and $\text{supp}(\chi) \subset (0,\delta)$.    Set $\mathcal{V}:= \Sigma_{\epsilon/2,\delta/2}$. 
We define the  term $b_{-m}$  by 
\[
b_{-m} (x,t,\xi,\omega) := \begin{cases}  \tilde\chi(x,t,\xi,\omega)  (a_m)^{-1} (x,t,\xi,\omega)  , & (x,t,\xi,\omega) \in \Sigma_{\epsilon,\delta} \\
0 , & (x,t,\xi,\omega) \in T^*\R^4 \setminus  \Sigma_{\epsilon,\delta} ,
\end{cases}
\]
where 
\begin{align}
\tilde\chi(x,t,\xi,\omega) :=  \chi(  |(x,t)-(x_0,t_0)| )  \chi \Big( \left| \frac {(\xi,\omega)}{ | (\xi,\omega)| } - \frac{ ( \xi_0, \omega_0) }{|( \xi_0,\omega_0)|} \right| \Big). 
\end{align} 
Then $b_{-m}$ is positively homogeneous of degree $-m$ and equals $(a_m)^{-1}$ on $\mathcal{V}$. 
% For the rest of the terms we introduce an useful notation:  
%\[
%b^k_j \sim  
%\]
The leading terms in the asymptotic expansions $a\#b_{-m} \sim \sum_{h=-m}^\infty a_{-h} \# b_{-m}$ and  $b_{-m}\#a \sim \sum_{h=-m}^\infty b_{-m} \# a_{-h}$ are the compositions $a_m b_{-m}$ and $b_{-m}a_m$ which in $\mathcal{V}$ reduce to  the identity matrix by definition.
Thus, we can find $f,g \in M_3( S_{cl}^{-1}(\R^4 \times \R^4))$ such that for $(x,t,\xi,\omega) \in \mathcal{V}$,
\begin{align}
(I-f)(x,t,\xi,\omega) &=  a\#b_{-m} (x,t,\xi,\omega),  \\%\quad  a\#b_{-m} \sim \sum_{h=-m}^\infty a_{-h} \# b_{-m} , \\ 
(I- g)(x,t,\xi,\omega) &= b_{-m}\#a (x,t,\xi,\omega) ,% \quad    b_{-m}\#a \sim \sum_{h=-m}^\infty b_{-m} \# a_{-h},
 \end{align}
 where $I$ stands for the identity matrix. 
 Define
\begin{align}
&b_f \sim  b_{-m}\# (I + f +f\#f +f\#f\#f +\dots),  \\
&b_g \sim   (I + g +g\#g +g\#g\#g +\dots) \# b_{-m}.
\end{align}
In $  \mathcal{V} $ we deduce $a \# b_f \equiv I \equiv b_g \# a  $ modulo $  M_3( S^{-\infty}( \R^4 \times \R^4) ) $ by substituting the identities above. 
Consequently,
\[
WF(I - LR_f) \cap   \mathcal{V} = \emptyset, \quad WF(I - R_g L) \cap \mathcal{V} = \emptyset,
\]
where 
\[
\begin{split}
(R_f)^k_j  :=  (b_f)^k_j(x,t,D_x,D_t) , \quad 
(R_g)^k_j  := (b_g)^k_j (x,t,D_x,D_t). 
% \int_{\R^4} e^{i(x-y) \xi + i(t-s) \omega } (b_g)^k_j(x,t,\xi,\omega)  d\xi d\omega. 
\end{split}
\]
Further,
\[
b_g \equiv I \# b_g \equiv  ( b_f \# a ) \# b_g  \equiv   b_f \# (a  \# b_g ) \equiv b_f \# I \equiv b_f \quad \text{in} \quad   \mathcal{V}
\]
which implies $R_f \equiv R_g$. 
In conclusion, we may define the terms with lower degree of homogeneity by
\[
\begin{split}
%b_{-m} := b_{-m}, \quad 
% b_{-m-1} &:= b_{-m} \# f, \\
%   b_{-m-2}&:= b_{-m} \# f \# f, 
%\\ b_{-m-3}&:= b_{-m} \# f \# f \# f,  \\
 %& \quad  \vdots  \\
 b_{-m-h} := b_{-m-h+1} \# f, \quad h=1,2,3,\dots,
\end{split}
\]
(i.e. $R:= R_f=R_g$) to obtain the result.
 \end{proof}
 
 \begin{proof}[Proof of Lemma \ref{ds89askqeiorw}]
%\emph{Step 2: Determination of bicharacteristics.} 
Let us show that the bicharacteristics are of the form \eqref{optomet}. Denote $H := H_{p^\pm}$, $p^\pm(x,t,\xi,\omega) :=   \omega \mp |\xi|_g$.  By definition,
\[
H = \frac{\partial }{\partial t} \mp  \frac{ g^{jk} \xi_j } { | \xi|_g} \frac{\partial }{ \partial x^k}  \pm   \frac{ \xi_l \xi_j } { 2 | \xi|_g} \frac{\partial g^{jl} }{\partial x^k}   \frac{\partial }{ \partial \xi_k}  .
\]
Taking a derivative of $g^{jl}g_{lm}  = \delta^j_m$ yields $g_{lm} \partial_k g^{jl} +g^{jl} \partial_k g_{lm}  = 0$. That is, $ \partial_k g^{jl} =- g^{lm }g^{jh} \partial_k g_{hm} $. Hence, 
\[
H = \frac{\partial }{\partial t}  \mp  \frac{ g^{jk} \xi_j } { | \xi|_g} \frac{\partial }{ \partial x^k}  \mp    \frac{ (\xi_l g^{lm}) (\xi_j g^{jh}) } { 2 | \xi|_g} \frac{\partial g_{mh} }{\partial x^k}\frac{\partial }{ \partial \xi_k}  .
\]
Thus, a bicharacteristic curve satisfies
\begin{align}
&\dot{X}^k (r) =  \mp  \frac{ g^{jk}(X(r)) \Xi_j(r) } { | \Xi(r) |_g} , \label{ewiwe1}\\
&\dot{T} (r) = 1,\label{ewiwe2} \\
 & \dot\Xi_k(r) =  \mp \frac{1}{2 | \Xi(r) | }   \Xi_l g^{lm}(X(r)) \Xi_j g^{jh} (X(r))  \frac{\partial g_{mh} }{\partial x^k} \Big|_{X(r)} \label{ewiwe3}, \\
& \dot\Omega = 0 \label{ewiwe4}. 
\end{align}
An initial value at $r=0$ fixes the solution uniquely. 
The equations \eqref{ewiwe1}, \eqref{ewiwe2} and \eqref{ewiwe4} are solved by 
\[
X(r) = \gamma_{x,v}\left(\frac{r}{|v|_g}\right), \quad  \Xi (r) = \mp \flat_g \dot \gamma_{x,v}\left(\frac{r}{|v|_g}\right), \quad  T(r) = t +   r, \quad \Omega = \pm | v|_g ,
\]
where the initial value $\Gamma(0) = (x,t,\xi,  \pm |\xi|_g)$ is obtained by setting $v := \mp \sharp_g \xi$. 
Let us show that this solves also \eqref{ewiwe3}. It is suffices to show that
\begin{equation}\label{tuleasssw}
  \partial_r \left(    \flat_g \dot \gamma_{x,v} \left(\frac{r}{|v|_g}\right)  \right)_k=   \frac{1}{2 | v |_g }   \dot\gamma^m_{x,v}  \left(\frac{r}{|v|_g}\right)  \dot\gamma_{x,v}^h  \left(\frac{r}{|v|_g}\right)  \partial_k g_{mh}  \big|_{\gamma_{x,v}(r/|v|_g)}.
\end{equation}
We compute
\[
\begin{split}
&\partial_r \left(   \flat_g \dot \gamma_{x,v} \left(\frac{r}{|v|_g}\right)  \right) =\partial_r \left(   g_{lk}\left( \gamma_{x,v} \left( \frac{r}{|v|} \right) \right)  \dot \gamma_{x,v}^l \left(\frac{r}{|v|_g}\right)  \right)  \\
&=  \frac{1}{|v|_g} \dot\gamma^j_{x,v}\left(\frac{r}{|v|_g}\right)   \dot \gamma_{x,v}^l \left(\frac{r}{|v|_g}\right)  \partial_j g_{lk}  \big|_{\gamma_{x,v}(r/|v|_g)}+\frac{1}{|v|_g}  \ddot \gamma_{x,v}^h \left(\frac{r}{|v|_g}\right)   g_{hk}\left( \gamma_{x,v}\left(\frac{r}{|v|_g}\right) \right) .
\end{split}
\]
As $\gamma_{x,v}$ is a geodesic, one derives
\[
  g_{hk} (\gamma)  \ddot \gamma_{x,v}^h =  -\dot\gamma_{x,v}^j\dot\gamma_{x,v}^l g_{hk}(\gamma) \Gamma_{jl}^h(\gamma)  =- \dot\gamma_{x,v}^j\dot\gamma_{x,v}^l  \partial_j g_{lk}|_\gamma + \frac{1}{2} \partial_k g_{jl} |_\gamma \dot\gamma_{x,v}^j\dot\gamma_{x,v}^l 
\]
where $\Gamma_{jl}^h$, $j,l,h=1,2,3$, are the Christoffel symbols of $g$. 
Substituting this in the previous equation yields \eqref{tuleasssw}. 
%\begin{lemma}Let $K_j \subset  \R^4$, $j=1,2,3$, be connected submanifolds such that each element in $TK_j$ is space-like with respect to $g-dt^2$ and set 
%$S_j \in I^{m_j} ( N^*K_j) $.
%%Let $(x_0,t_0,\xi_0,\omega_0) \in T^*\R^4$ in the definition of $\mathcal{H}$ be such that $\omega_0 = |\xi_0|$ $(\text{i.e. }(x_0,t_0,\xi_0,\omega_0) \in \text{char}(p^+))$. 
%Let $(X_0(r),T_0(r),\Xi_0(r), \Omega_0(r))$ be a curve of the form (\ref{optomet}). 
%%with the initial value 
%%\[
%%(X_0(0),T_0(0),\Xi_0(0), \Omega_0(0)) = (x_0,t_0,\xi_0,\omega_0) 
%%\]
%%$(\text{i.e. }(x,t,\flat_g v , |v|_g ) = (x_0,t_0, \xi_0, \omega_0) )$. 
%Then, for every $\lambda > 0$ and $j=1,2,3$,
%\[
%  \sigma (  \mathcal{X}^l_j E^+ \mathcal{Y}^k_l S_k  ) ( X_0(r), T_0(r) , \lambda \Xi_0 (r) , \lambda  \Omega_0(r) ) =     M_j^k\left( X_0(r) , T_0(r)  \right) \sigma(  S_k  ) ( x_0,t_0, \lambda  \xi_0, \lambda \omega_0),
%\]
% where the matrix $M\left( x,t  \right)$ is invertible for every $(x,t)$ in a neighbourhood of $(x_0,t_0)$. 
%\end{lemma}
\end{proof}

\begin{proof}[Proof of Lemma \ref{lahjboelihjs125}] 
Since the Hamiltonian vector field corresponding to $D_s$ is simply $H_{\rho} = \partial_s$, it follows immediately that the bicharacteristics of $D_s$ (i.e. the characteristic integral curves of the Hamiltonian vector field) equal  $r \mapsto (y,s + r,\eta,0) $, for $(y,s,\eta) \in \R^4 \times \R^3$.
By Lemma \ref{ds89askqeiorw} it suffices to show that the curve $ F : r \mapsto \mathcal{H} (y,s+r,\eta,0)$ is an integral curve of the Hamiltonian $H_{p^\pm}$. For $f \in C^\infty ( T^*\R^4)$ we compute (See [Duistermaat, Theorem 3.5.2.])
\[
\begin{split}
H_{p^\pm}( F(r) )  f  &=  \{ p^\pm,f\} \circ \mathcal{H}  ( y, s+ r, \eta,  0 ) 
= \{ p^\pm \circ \mathcal{H} ,f \circ \mathcal{H} \}    ( y, s + r, \eta,  0 ) \\
&= \{ \rho , f\circ \mathcal{H}\}  ( y, s + r, \eta,  0 )  
=  \dot{F}(r)  f.    \\
\end{split}
\]
This proves the claim. 
\end{proof}

\begin{proof}[Derivation of \eqref{meniscus_medialis}]
Recall that $\mathcal{X} \mathcal{Y} \equiv I $ in a small conic neighbourhood $ \mathcal{V}$ of $(x_0,t_0,\xi_0,\omega_0)$ and that the operator wave-fronts of $\mathcal{X}$ and $ \mathcal{Y}$ are the graphs $G_{\mathcal{H}}$ and $G_{\mathcal{H}^{-1}}$, respectively.
Hence, 
\[
\begin{split}
\mathcal{V} \cap   \bigcup_{j=1,2,3 }  WF(f_j )  = \mathcal{V} \cap   \bigcup_{j=1,2,3 }   WF( \mathcal{X}^l_j \mathcal{Y}_l^k f_k ) \\
\subset \mathcal{V} \cap   \bigcup_{j=1,2,3 }   G_{\mathcal{H}} \circ  WF(  \mathcal{Y}_j^k f_k )  = \mathcal{V} \cap   \bigcup_{j=1,2,3 }   \mathcal{H} WF(  \mathcal{Y}_j^k f_k ) 
\\
 \subset \mathcal{V} \cap   \bigcup_{j=1,2,3 }   \mathcal{H} G_{\mathcal{H}^{-1}} \circ  WF(   f_j )  = \mathcal{V} \cap   \bigcup_{j=1,2,3 }     WF(   f_j ) ,
 \end{split}
\]
\[
\Rightarrow   \mathcal{V} \cap   \bigcup_{j=1,2,3 }     WF(   f_j ) =  \mathcal{V} \cap   \bigcup_{j=1,2,3 }   \mathcal{H} WF(  \mathcal{Y}_j^k f_k )  .
\]
Define a conic neighbourhood $\mathcal{W} \subset T^*\R^4$ by $\mathcal{W} = \mathcal{H}^{-1} \mathcal{V}$.  Then,
\[
\begin{split}
\Rightarrow    \mathcal{W} \cap  \mathcal{H}^{-1}\bigcup_{j=1,2,3 }     WF(   f_j ) 
=  \mathcal{H}^{-1}  \bigg(   \mathcal{V} \cap   \bigcup_{j=1,2,3 }     WF(   f_j )\bigg)
\\
=   \mathcal{H}^{-1} \bigg(  \mathcal{V} \cap   \bigcup_{j=1,2,3 }   \mathcal{H} WF(  \mathcal{Y}_j^k f_k )  \bigg)  
=     \mathcal{H}^{-1} \mathcal{V} \cap   \bigcup_{j=1,2,3 }  \mathcal{H}^{-1}  \mathcal{H} WF(  \mathcal{Y}_j^k f_k ) 
\\ 
=    \mathcal{W} \cap   \bigcup_{j=1,2,3 }   WF(  \mathcal{Y}_j^k f_k )  .
\end{split}
\]
Hence, 
\[
  \mathcal{W} \cap  \mathcal{H}^{-1}\bigcup_{j=1,2,3 }     WF(   f_j ) =    \mathcal{W} \cap   \bigcup_{j=1,2,3 }   WF(  \mathcal{Y}_j^k f_k )  .
\]

\end{proof}

\bibliographystyle{alpha}
\bibliography{bibliography_cherenkov}

\end{document}